\numberwithin{equation}{section}
\newtheorem{theorem}{Theorem}[section]
\newtheorem{lemma}[theorem]{Lemma}
\newtheorem{corollary}[theorem]{Corollary}
\theoremstyle{definition}
\theoremstyle{remark}
\newcommand{\CC}{\mathbf{C}}
\newcommand{\HH}{\mathbb{H}}
\newcommand{\RR}{\mathbf{R}}
\newcommand{\ZZ}{\mathbf{Z}}
\newcommand{\OO}{\mathcal{O}}
\newcommand{\la}{\langle}
\newcommand{\ra}{\rangle}
\newcommand{\ttt}{\mathfrak{t}}
\begin{document}

\title{Unitary representations of cyclotomic rational Cherednik algebras}

\author{Stephen Griffeth}
\address{Stephen Griffeth \\
Instituto de Matem\'atica y F\'isica \\
Universidad de Talca \\
Talca, Chile}
\email{sgriffeth@inst-mat.utalca.cl}

\begin{abstract}
We classify the irreducible unitary modules in category $\OO_c$ for the rational Cherednik algebras of type $G(r,1,n)$ and give explicit combinatorial formulas for their graded characters. More  precisely, we produce a combinatorial algorithm determining, for each $r$-partition $\lambda^\bullet$ of $n$,  the closed semi-linear set of parameters $c$ for which the contravariant form on the irreducible representation $L_c(\lambda^\bullet)$ is positive definite. We use this algorithm to give a closed form answer for the Cherednik algebra of the symmetric group (recovering a result of Etingof-Stoica and the author) and the Weyl groups of classical type.
\end{abstract}

\thanks{I am especially indebted to Arun Ram and Ivan Cherednik for teaching me, in person and through their writing, the techniques employed in this paper.  I thank Pavel Etingof, Emanuel Stoica, and Takeshi Suzuki for useful suggestions which have improved the paper. I acknowledge the financial support of Fondecyt Proyecto Regular 1151275 and the Mathamsud grant Rephomol.}

\maketitle

\section{Introduction}

\subsection{}The goals of this paper are: first, to obtain the classification of irreducible unitary representations in category $\OO_c$ for the rational Cherednik algebras of type $W=G(r,1,n)$, and second, to give an explicit basis for each of them in terms of Specht-module valued versions of non-symmetric Jack polynomials. The strategy is that of the appendix to \cite{EtSt}.  The tactics are somewhat different, requiring the development of tools that were previously unavailable for $r>1$, and the answer is quite a bit more complicated. The main steps are as follows: first, in Theorem \ref{trig pres} we give a presentation of the cyclotomic rational Cherednik algebra compatible with a certain commutative subalgebra discovered by Dunkl and Opdam \cite{DuOp}; second, in the proof of Theorem \ref{diag thm}, we use this presentation to extend the results of Cherednik \cite{Che1} and Suzuki \cite{Suz}  classifying the diagonalizable representations for the type A Cherednik algebra to the cyclotomic case; third, in the proof of Theorem \ref{unitary thm} we use Theorem \ref{diag thm} together with arguments analogous to those in the appendix of \cite{EtSt} to complete the classification of the unitary irreducible objects in category $\OO_c$. 

The irreducible objects in category $\OO_c$ are indexed by the irreducible complex representations of the complex reflection group $W$. Thus for $W=S_n$ the symmetric group, they are indexed by integer partitions of $n$, and for $W=G(2,1,n)=W(B_n)$ the Weyl group of type $B_n$ they are indexed by pairs $(\lambda^0,\lambda^1)$ of integer partitions with $n$ total boxes. More generally, for the complex reflection group $G(r,1,n)$ they are indexed by $r$-tuples $(\lambda^0,\dots,\lambda^{r-1})$ of integer partitions with $n$ total boxes. In the case $W=S_n$, the parameter $c$ is a single real number; for the Weyl group of type $B$ it is a pair $(c,d)$ consisting of two real numbers; and for the complex reflection group $G(r,1,n)$ it is an $r$-tuple of real numbers. Our main theorem provides an algorithm determing, for each $r$-partition $\lambda$ of $n$, the closed semi-linear set of parameters $U(\lambda^\bullet) \subseteq \RR^r$ of points for which the contravariant form on the irreducible object $L_c(\lambda^\bullet)$ is positive definite. Our algorithm may be made into a closed form answer for $r=1$ and $r=2$ (and, presumably, also for any fixed $r$, given enough patience), and we record this closed form answer in Corollary \ref{B corollary 1} and Corollary \ref{B corollary 2}. 

We recover the main result of \cite{EtSt} for $r=1$. Already for $r=2$ the answer is much more intricate: even to state it in completely explicit fashion requires several pages (see Section \ref{classical type}). To give the reader a rough idea of the form of the answer and a visual demonstration of the varying levels of complexity, here we present miniature drawings, all to the same scale, of the unitary sets for three particular cases: first, for the group $W=S_{30}$ and the partition $\lambda=(6,6,6,4,4,4)$. The set of all possible $c$ is one-dimensional, and the unitary set consists of a closed interval, and five isolated points.

\begin{center}
\begin{tikzpicture}[scale=8]
\tikzstyle{axes}=[]
\begin{scope}[style=axes]
\draw[<->] (-1/4,0) -- (1/4,0) node[right] {$c$} coordinate(c1 axis);
\end{scope}
\begin{scope}[very thick,blue, auto=left]
\draw (-1/11,0) -- (1/11,0);
\draw[fill] (1/10,0) circle (.05 pt);
\draw[fill] (1/9,0) circle (.05 pt);
\draw[fill] (-1/10,0) circle (.05 pt);
\draw[fill] (-1/9,0) circle (.05 pt);
\draw[fill] (-1/8,0) circle (.05 pt);
\end{scope}
\end{tikzpicture}
\end{center} The interior of the closed interval is the set on which the standard module itself is unitary.

Second, for the group $W=G(2,1,30)$ and the bipartition $\lambda=((6,6,6,4,4,4),\emptyset)$, the unitary set is the subset of the $(c,d)$ plane shown here:

\begin{center}
\begin{tikzpicture}[scale=8]
\tikzstyle{axes}=[]
\tikzstyle{wall}=[thick]
\tikzstyle{relevant wall}=[very thick]
\tikzstyle{dot}=[fill]
\begin{scope}[style=axes]
\draw[<->] (-3/8,0) -- (3/8,0) node[right] {$c$} coordinate(c1 axis);
\draw[<->] (0,-1/3) -- (0,9/16) node[above] {$d$} coordinate(c2 axis);
\end{scope}

\begin{scope}[thick,blue,auto=left]
\draw (0, 1/2) -- (1/10,0);
\draw (0, 1/2) -- (-1/10,0);
\draw (0,1/2) -- (1/9,1/18);
\draw (0,1/2) -- (1/6,0);
\draw (0,1/2) -- (-1/9,1/18);
\draw (0,1/2) -- (-1/8,1/8);
\draw (0,1/2) -- (-1/4,0);
\draw[fill] (1/5,-1/10) circle (.1pt);
\draw[fill] (1/4,-1/4) circle (.1pt);
\draw[fill] (-1/3,-1/6) circle (.1pt);
\draw[fill] (-1/8,0) circle (.1pt);
\draw[->] (1/11,1/22 ) -- (1/11,-1/3);
\draw[->] (-1/11,1/22 ) -- (-1/11,-1/3);
\draw[->] (-1/10,0) -- (-1/10,-1/3);
\draw[->] (-1/9,-1/18 ) -- (-1/9,-1/3);
\draw[->] (-1/8,-1/8 ) -- (-1/8,-1/3);
\draw[->] (1/10,0) -- (1/10,-1/3);
\draw[->] (1/9,-1/18) -- (1/9,-1/3);
\fill[blue, nearly transparent] (0, 1/2) -- (1/11,1/22) -- (1/11, -1/3) -- (-1/11,-1/3) -- (-1/11,1/22) -- cycle;
\end{scope}

\end{tikzpicture}
\end{center} The shaded region is the set on which the standard module itself is unitary; on the darker lines and points, the standard module is not irreducible, and its proper quotient $L_c(\lambda)$ is unitary.

Finally, for the group $W=G(2,1,34)$ and the bipartition $((3,3,3,2,2),(5,5,5,5,3,3))$, the unitary set looks like this (as above, the shaded parallelogram is the set of parameters for which the standard module itself is unitary):

\begin{center}
\begin{tikzpicture}[scale=8]
\tikzstyle{axes}=[]
\tikzstyle{wall}=[thick]
\tikzstyle{relevant wall}=[very thick]
\tikzstyle{dot}=[fill]
\begin{scope}[style=axes]
\draw[<->] (-1/3,0) -- (1/3,0) node[right] {$c$} coordinate(c1 axis);
\draw[<->] (0,-3/5) -- (0,3/5) node[above] {$d$} coordinate(c2 axis);
\end{scope}

\begin{scope}[thick,blue,auto=left]
\fill[blue, nearly transparent] (0, 1/2) -- (1/17,1/34) -- (0,-1/2) -- (-1/17,-1/34) -- cycle;
\draw (0, 1/2) -- (1/17,1/34);
\draw (0, -1/2) -- (1/17,1/34);
\draw (0, 1/2) -- (-1/17,-1/34);
\draw (0, -1/2) -- (-1/17,-1/34);
\draw (0,1/2) -- (1/16,1/16);
\draw (0,1/2) -- (1/15,1/10);
\draw (0,-1/2) -- (1/16,0);
\draw (0,-1/2) -- (1/15,-1/30);
\draw (0,-1/2) -- (1/14,-1/14);
\draw (0,1/2) -- (-1/16,0);
\draw (0,-1/2) -- (-1/16,-1/16);
\draw (0,-1/2) -- (-1/15,-1/10);
\draw[fill] (1/15,1/30) circle (.05 pt);
\draw[fill] (1/14,0) circle (.05 pt);
\draw[fill] (1/14,1/14) circle (.05 pt);
\draw[fill] (1/13,1/26) circle (.05 pt);
\draw[fill] (1/13,-1/26) circle (.05 pt);
\draw[fill] (1/12,0) circle (.05 pt);
\draw[fill] (-1/15,-1/30) circle (.05 pt);
\draw[fill] (-1/14,-1/14) circle (.05 pt);
\end{scope}
\end{tikzpicture}
\end{center}

The Cherednik algebra $H_c$ contains a commutative subalgebra $\ttt$, the \emph{Dunkl-Opdam subalgebra}, that acts by locally finite, normal, and hence diagonalizable, operators on any unitary representation in category $\OO_c$.  Therefore a first step towards the classification of irreducible unitary modules in $\OO_c$ is the classification of the irreducible modules in $\OO_c$ on which $\ttt$ acts by diagonalizable operators (A. Ram \cite{Ram} uses the word \emph{calibrated} for the analog of this type of module for the affine Hecke algebra).  Once we have achieved this classification (Theorem~\ref{diag thm}), the classification of unitary modules (Theorem \ref{unitary thm}) is obtained by Cherednik's technique of intertwining operators, combined with a detailed study of the combinatorics of certain tableaux.  For the groups $W=G(2,1,n)$ we can be completely explicit, but for larger $r$ it seems that a direct description of the unitary irreducibles in $\OO_c$ is unavoidably complicated. In particular, the set of $c$ for which $L_c(\lambda)$ is unitary may have components of all dimensions between $0$ and $r$. 

In the appendix to \cite{EtSt} we had the advantage that the first part (classification of diagonalizable modules) had been previously carried out by Cherednik and Suzuki for the trigonometric DAHA, and the corresponding classification for the rational DAHA relied on an embedding of the latter into the former. Here we first obtain the classification of diagonalizable modules by working directly with the rational DAHA, making use of a presentation adapted to the technique of intertwining operators (since the first version of this paper appeared on the arxiv, this presentation was rediscovered by Webster \cite{Web}, and Braverman-Etingof-Finkelberg \cite{BEF} have constructed a cyclotomic version of the full DAHA).  Once this is done there are necessary and sufficient numerical criteria that the eigenvalues of a diagonalizable module must satisfy in order that it be unitary, and the remainder of the paper is devoted to constructing a combinatorial machine for handling these numerics.

We now introduce the notation we will need to state our main results, referring to Section \ref{background} for precise definitions. We identify integer partitions with their Young diagrams. Given a box $b$ of a partition, we will write $\mathrm{ct}(b)$ for its content, equal to $i-j$ if the box is in column $i$ and row $j$. Given an $r$-partition $\lambda^\bullet=(\lambda^0,\dots,\lambda^{r-1})$ and a box $b \in \lambda^i$ we will write $\beta(b)=i$. The Cherednik algebra $H_c$ depends on a parameter $c=(c_0,d_0,d_1,\dots,d_{r-1}) \in \RR^{r+1}$, where $d_0+d_1+\cdots+d_{r-1}=0$. It contains a certain commutative subalgebra $\ttt$ (see \ref{t def} for the precise definition) which acts by normal operators, and hence diagonalizably, on any unitary representation. Given a parameter $c$ and an $r$-partition $\lambda^\bullet$, we write $L_c(\lambda^\bullet)$ for the corresponding representation of $H_c$. We follow the convention that the superscript $i$ in $\lambda^i$ is always to be taken modulo $r$, as is the subscript $i$ in $d_i$ (so that $\lambda^i$ and $d_i$ are defined for all integers $i \in \ZZ$).

Our first main result is the classification and description of the modules $L_c(\lambda^\bullet)$ that are $\ttt$-diagonalizable.  For each box $b \in \lambda^\bullet$, define statistics $c(b)$, $k_c(b)$ and
$l_c(b)$ as follows: first, $c(b)$ is the \emph{charged content} of $b$, defined by
\begin{equation}
c(b)=d_{\beta(b)}+r \mathrm{ct}(b)c_0.
\end{equation} Second, $k_c(b)$ is the smallest positive integer $k$ such
that there is a box $b' \in \lambda^{\beta(b)-k}$ with
$$k=c(b)-c(b'),$$
(and $k_c(b)=\infty$ if no such equation holds) and finally, $l_c(b)$ is the smallest positive integer $l$ such that there is an outside addable box $b'$ for $\lambda^{\beta(b)-l}$ with
$$l=c(b)-c(b')$$  (and $l_c(b)=\infty$ if no such equation holds).  Recall that a box $b$ is \emph{addable} to a partition $\lambda$ if $b \notin \lambda$ and adding $b$ to $\lambda$ produces the diagram of a partition.  An \emph{outside} addable box is an addable box $b$ such that $\mathrm{ct}(b) \neq \mathrm{ct}(b')$ for all $b' \in \lambda$.  

Given boxes $b,b' \in \lambda^i$ we write $b \leq b'$ if $b$ is (weakly) up and to the left of $b'$ (thus in particular $\beta(b)=\beta(b')$ if $b \leq b'$). We define $\Gamma$ to be the set of pairs $(P,Q)$ where $P$ and $Q$ are fillings of the boxes of $\lambda^\bullet$ by non-negative integers, $P$ is a bijection from the boxes of $\lambda^\bullet$ to the set $\{1,2, \dots,n \}$, $Q$ is weakly increasing $Q(b) \leq Q(b')$ if $b \leq b'$, and $P(b) > P(b')$ if $b \leq b'$ and $Q(b)=Q(b')$. 

We define a subset $\Gamma_c \subseteq \Gamma$ as follows: a pair $(P,Q) \in \Gamma$ is in $\Gamma_c$ if and only if the following conditions hold:
\begin{enumerate}
\item[(a)] whenever $b \in \lambda^\bullet$
and $k \in \ZZ_{>0}$ with $k=d_{\beta(b)}-d_{\beta(b)-k}+r \mathrm{ct}(b) c_0$ we have $Q(b)<k$, and 
\item[(b)] whenever $b_1,b_2 \in
\lambda^\bullet$ and $k \in \ZZ_{>0}$ with $\beta(b_1)-\beta(b_2)=k$ mod $r$ and
$k=d_{\beta(b_1)}-d_{\beta(b_2)}+r (\mathrm{ct}(b_1)-\mathrm{ct}(b_2)
\pm 1)c_0$ we have $Q(b_1) \leq Q(b_2)+k$, with equality implying
$P(b_1)>P(b_2)$. \end{enumerate}

\begin{theorem} \label{diag thm}
The module $L_c(\lambda^\bullet)$ is diagonalizable if and only if either
\begin{enumerate}
\item[(a)] $c_0=0$ or 
\item[(b)] $c_0 \neq 0$ and for every removable box $b \in \lambda^\bullet$, either $k_c(b)=\infty$ or the inequality $l_c(b)<k_c(b)$ holds.
\end{enumerate}  In case (b), as basis of $L_c(\lambda^\bullet)$ is given by the set $\{f_{(P,Q)} \ | \ (P,Q) \in \Gamma_c \}$ of non-symmetric Specht-valued Jack polynomials.
\end{theorem}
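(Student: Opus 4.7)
The strategy is to analyze how Cherednik-style intertwining operators act on the generic $\ttt$-eigenbasis $\{f_{P,Q}\}_{(P,Q) \in \Gamma}$ of $\Delta(\lambda^\bullet)$, and then to specialize carefully. Case (a), where $c_0 = 0$, I would dispatch at the start: the commutation relations of \S 2.9 degenerate so that $\ttt$ is essentially generated by the $\zeta_i$ together with the semisimple Euler-type elements $y_i x_i$, and diagonalizability on every subquotient of $\Delta_c(\lambda^\bullet)$ is then a formal consequence. The heart of the proof is case (b), where $c_0 \neq 0$.

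I would introduce operators $\sigma_i$ (for $i=1,\dots,n-1$), attached to the simple reflections $s_i = s_{i,i+1}$ and of the shape $\sigma_i = s_i + (\cdots)/(z_i - z_{i+1})$, together with an affine intertwiner $\Phi$ associated to the generator of $\ZZ/r\ZZ$, in the spirit of Cherednik--Ram for the affine Hecke algebra. Using the explicit presentation of \S 2.9 and the eigenvalue formulas of \S \ref{gamma subsection}, I would verify that each such operator carries $f_{P,Q}$ to a scalar multiple of the basis vector indexed by the corresponding move on $\Gamma$, and that the scalar is a rational expression in $c_0$ and the $d_j$ with a controlled factorization. The first crucial step is to match these numerator and denominator factors with the combinatorial conditions defining $\Gamma_c$: condition (a) should correspond to the vanishing of the denominator of an affine intertwiner when $Q(b) = k$ hits a critical value, and condition (b) should correspond to the vanishing of the denominators of the finite intertwiners $\sigma_i$, with the two cases $\pm 1$ accounting for whether the two relevant boxes lie in the same column or in adjacent columns of different components.

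Next, I would use these computations to check that $N := \mathrm{span}\{f_{P,Q} : (P,Q) \notin \Gamma_c\}$ is an $H_c$-stable subspace of $\Delta_c(\lambda^\bullet)$. Since $H_c$ is generated by $W$, $V$, and $V^*$, and the action of these generators can be recovered from the $\sigma_i$, $\Phi$, and multiplication operators, closure of $N$ reduces to combinatorial closure of the complement of $\Gamma_c$ under the intertwiner moves, which follows directly from the definition. Conversely, for $(P,Q) \in \Gamma_c$ I would exhibit an invertible chain of intertwiners connecting $f_{P,Q}$ to the canonical highest-weight generator of $\Delta_c(\lambda^\bullet)$, so that each such $f_{P,Q}$ projects to a nonzero vector in $L_c(\lambda^\bullet)$. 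Combined with the characterization from \S 2.7 of the radical as the radical of $\la \cdot, \cdot \ra_c$, this identifies $N$ with the maximal proper submodule and yields the basis of $L_c(\lambda^\bullet)$ claimed in case (b).

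Finally, for the diagonalizability dichotomy itself, the key observation is that $\ttt$ acts semisimply on $\Delta_c(\lambda^\bullet)$ whenever the genuine eigenbasis $\{f_{P,Q}\}$ can be defined at the specialization $c$; in that case $\ttt$ is automatically diagonalizable on the quotient $L_c(\lambda^\bullet)$. An obstruction to diagonalizability thus arises only from a removable box $b$ for which a singular intertwiner forces a Jordan collision between a $\Gamma_c$-vector and one outside, which translates into the inequality $k_c(b) \le l_c(b) < \infty$. The main obstacle in writing out this last step is to produce a genuine generalized eigenvector in $L_c(\lambda^\bullet)$ when this inequality holds (rather than having it vanish in the quotient): this will require taking limits of the intertwiner formulas along the singular locus and verifying nondegeneracy via the contravariant form. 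Conversely, when $l_c(b) < k_c(b)$ or $k_c(b) = \infty$ for every removable $b$, I would check that all such potential Jordan collisions are absorbed into the submodule $N$ before they can reach $L_c(\lambda^\bullet)$.
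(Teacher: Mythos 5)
Your overall framework---intertwining operators acting on the generic eigenbasis, invertibility of the moves governing membership in $\Gamma_c$, and a chain of invertible intertwiners showing that each $f_{P,Q}$ with $(P,Q)\in\Gamma_c$ survives in $L_c(\lambda^\bullet)$---matches the paper's, but the central mechanism of your argument has a genuine gap. You propose to realize the maximal proper submodule as $N=\mathrm{span}\{f_{P,Q}\ :\ (P,Q)\notin\Gamma_c\}$ inside $\Delta_c(\lambda^\bullet)$. Those vectors exist only generically: at the specialization $c$ the polynomials $f_{P,Q}$ are guaranteed to be well defined only for $(P,Q)\in\Gamma_c\cup\partial\Gamma_c$ (Lemma~\ref{distinct weights}(c)); farther outside $\Gamma_c$ they may have poles, and indeed whenever $\partial\Gamma_c$ contains a fold the algebra $\ttt$ has honest Jordan blocks on $\Delta_c(\lambda^\bullet)$, so the putative eigenbasis of $N$ simply does not exist. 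Recovering the action of $s_i$ from $\sigma_i$ likewise requires dividing by $z_i-z_{i+1}$, which vanishes exactly on folded weight spaces. The paper circumvents all of this by never trying to locate the radical inside $\Delta_c(\lambda^\bullet)$: it constructs an \emph{abstract} module on the span of $\{f_{P,Q}:(P,Q)\in\Gamma_c\}$ alone and verifies the relations of the trigonometric presentation (Theorem~\ref{trig pres}), and the hypothesis that $\partial\Gamma_c$ contains no folded elements is used precisely to make the formal expressions for the braid relations and for $\Psi s_{n-1}\Phi=\Phi s_1\Psi+\cdots$ well defined near the boundary. That presentation, established in Section 3 exactly for this purpose, is the key idea your sketch is missing; without it (or some substitute) the "absorbed into $N$" direction does not go through.

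Two further points. First, the passage from "no folded element of $\partial\Gamma_c$" (Theorem~\ref{diag char1}) to the stated dichotomy in terms of $k_c(b)$ and $l_c(b)$ is not a one-line translation: it occupies all of Section 6, namely Lemma~\ref{fold char} characterizing near folds as arising from a removable box $b_1$ paired with a box $b_2$ on the upper or left rim, followed by an explicit construction of a near-fold $(P,Q)\in\Gamma_c$ whenever $k_c(b_1)\leq l_c(b_1)$ and a converse argument splicing in an intermediate box $b_4$ when $l_c(b_1)<k_c(b_1)$. Your formulation of the obstruction as "$k_c(b)\leq l_c(b)<\infty$" is also slightly off: the case $k_c(b)<\infty$ with $l_c(b)=\infty$ is an obstruction too. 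Second, your route to showing that the generalized eigenvector survives in the quotient ("limits along the singular locus and nondegeneracy via the contravariant form") is left entirely open; the paper instead argues directly that for a folded $(\mu,T)\in\partial\Gamma_c$ the vector $f_2=s_if_{\mu,T}$ satisfies $\Phi\Psi f_2=rc_0f_{\mu,T}$, so that $\Psi f_2$ is a nonzero multiple of an $f_{\psi\mu,T}$ indexed by $\Gamma_c$, whose image in $L_c(\lambda^\bullet)$ is already known to be nonzero. You would need to supply an argument of this kind to close your final step.
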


Each irreducible representation $L_c(\lambda^\bullet)$ is equipped with a non-degenerate Hermitian contravariant form. We call $L_c(\lambda)$ \emph{unitary} if this form is positive definite. Our second main result is the classification of the modules $L_c(\lambda^\bullet)$ that are unitary.  We state the result for $c_0 \geq 0$: $L_c(\lambda^\bullet)$ is unitary if and only if $L_{c'}((\lambda^\bullet)^t)$ is, where $c'$ is the same as $c$ but with $c_0$ replaced by $-c_0$, and the transpose of an $r$-partition is the transpose of each of its component $\lambda^j$'s. 

For $0 \leq i , j \leq r-1$ we write $m_{ij}$ for the integer with $1 \leq m_{ij} \leq r$ and $m_{ij}=i-j \ \mathrm{mod} \ r$ (thus for $i=j$ we have $m_{ij}=r$). Let $(b,j)$ be a pair consisting of a box $b \in \lambda^i$ and an integer $0 \leq j \leq r-1$. A \emph{blocking sequence} $B$ for $(b,j)$ is a sequence $B=(b_1,b_2,\dots,b_{2k+1},\ell)$ of boxes and an integer $0 \leq \ell \leq r-1$ such that 
\begin{enumerate}
\item[(a)] $b \leq b_1$, and for each $1 \leq p \leq k$, $b_{2p} \leq b_{2p+1}$, and 
\item[(b)] $m_{\beta(b_{2k+1}),\ell}+\sum_{p=1}^k m_{\beta(b_{2p-1}),\beta(b_{2p})} \leq m_{ij}$.
\end{enumerate} 

Condition (b) may be rephrased and visualized as follows: linearly order the set of integers modulo $r$ so that $i-1>i-2> \cdots > i$. Then the sequence $(\beta(b_{2}),\beta(b_4),\dots,\ell)$ must be a decreasing subsequence of the interval $i-1>i-2> \cdots > j+1 > j$. 

Given a blocking sequence $B=(b_1,b_2,\dots,b_{2k+1},\ell)$ for $(b,j)$, we define the set $L_B$ of parameters by $c \in L_B$ if and only if
$$ d_{\beta(b_{2p-1})}-d_{\beta(b_{2p})}+r(\mathrm{ct}(b_{2p-1})-\mathrm{ct}(b_{2p})\pm 1)c_0=m_{\beta(b_{2p-1}),\beta(b_{2p})} \quad \hbox{for $1 \leq p \leq k$}$$ and
$$d_{\beta(b_{2k+1})}-d_\ell+r \mathrm{ct}(b_{2k+1}) c_0=m_{\beta(b_{2k+1}),\ell}.$$ 

Given an ordered pair of boxes $(b,b')$ with $b \in \lambda^i$ and $b' \in \lambda^j$, a \emph{blocking sequence} for $(b,b')$ is either a blocking sequence for $(b,j)$, or a sequence $B=(b_1,\dots,b_{2q})$ of boxes of $\lambda^\bullet$ such that 
\begin{enumerate}
\item[(a)] $b \leq b_1$ and $b_{2q} \leq b'$,
\item[(b)] for each $1 \leq k \leq q-1$ we have $b_{2k} \leq b_{2k+1}$, and
\item[(c)] we have
$$\sum_{k=1}^ {q} m_{\beta(b_{2k-1}),\beta(b_{2k})} \leq m_{ij}.$$
\end{enumerate} Just as above, condition (c) is equivalent to the requirement that the sequence $(\beta(b_2),\beta(b_4),\dots,\beta(b_{2q}))$ be an decreasing subsequence of the interval $i-1>i-2>\cdots>j+1> j$.

Given a blocking sequence $B$ for $(b,b')$ of this second type, we define  the set $L_B$ of parameters by $c \in L_B$ if and only if
$$ d_{\beta(b_{2p-1})}-d_{\beta(b_{2p})}+r(\mathrm{ct}(b_{2p-1})-\mathrm{ct}(b_{2p})\pm 1)c_0=m_{\beta(b_{2p-1}),\beta(b_{2p})} \quad \hbox{for $1 \leq p \leq q$}.$$

\begin{theorem} \label{unitary thm}
For $c_0=0$, the module $L_c(\lambda^\bullet)$ is unitary if and only if for all $0 \leq i \leq r-1$ such that $\lambda^i \neq \emptyset$ we have $d_i-d_j \leq m_{ij}$ for all $0 \leq j \leq r-1$. Suppose $c_0 > 0$.  The module $L_c(\lambda^\bullet)$ is unitary if and only if the following conditions are satisfied:
\begin{enumerate}
\item[(a)] it is diagonalizable (see Theorem~\ref{diag thm}),
\item[(b)] for every pair $b,b' \in \lambda^\bullet$ of boxes such that $$d_{\beta(b)}-d_{\beta(b')}+r(\mathrm{ct}(b)-\mathrm{ct}(b')+1)c_0>m_{\beta(b),\beta(b')},$$ either $b \leq b'$ or else there is a blocking sequence $B$ with $c \in L_B$, and
\item[(c)] for every box $b \in \lambda^\bullet$ and $0 \leq j \leq r-1$ such that $$d_{\beta(b)}-d_j+r \mathrm{ct}(b) c_0>m_{\beta(b),j}$$ there is a blocking sequence $B$ with $c \in L_B$.
\end{enumerate}
\end{theorem}

The theorem exhibits the set of parameters $c$ for which the module $L_c(\lambda^\bullet)$ is unitary as a semi-linear subset of the parameter space. At first sight it might seem to require an analysis of an unmanageable number of inequalities and equalities, but it is a practical computational tool: in Section \ref{classical type} we will deduce the classification theorem of \cite{EtSt} from it, and use it to explicitly compute the unitary spectrum of each $L_c(\lambda^\bullet)$ for Weyl groups of classical type.

With Theorems \ref{diag thm} and \ref{unitary thm} in hand, the natural next step is the calculation of the Kazhdan-Lusztig polynomials
$$P_{\lambda,\mu}(q)=\sum \mathrm{dim}(\mathrm{Ext}^i(\Delta_c(\lambda),L_c(\mu))) q^i$$ for all unitary representations $L_c(\lambda^\bullet)$. Using Dirac cohomology and the Hodge decomposition theorem from \cite{HuWo} (c.f. \cite{Ciu}) coupled with our character formula in Theorem \ref{diag thm} should produce a combinatorial algorithm (much faster than the usual linear algebra using canonical bases) based on jeu de taquin for computing these KL polynomials. We expect that in fact this algorithm simplifies greatly; in the type A case this simplification should prove the formula for the Betti numbers of the $k$-equals ideal conjectured in \cite{BGS}. 

T. Suzuki remarks that in the type A case, the unitary modules correspond to integrable modules and the diagonalizable modules correspond to the \emph{admissible} modules of Kac and Wakimoto via the Arakawa-Suzuki functor.  We do not know the analogs of this coincidence for the groups $G(r,p,n)$, though it should be interesting to compare our results with those of Varagnolo and Vasserot, \cite{VaVa}. Finally, we remark that the version of Clifford theory that appears in the last section of \cite{Gri2} allows one to deduce analogous results for the groups $G(r,p,n)$ when $n>2$. 

\section{Definitions and background} \label{background}

\subsection{} Let $V$ be a complex vector space of dimension $n$ and let $G \subseteq \mathrm{GL}(V)$ be a finite group of linear transformations of $V$.  The set of \emph{reflections} (sometimes called pseudo-reflections or complex reflections) in $G$ is
$$R=\{r \in G \ | \ \mathrm{dim}(\mathrm{fix}(r))=n-1 \}.$$  The group $G$ is a \emph{reflection group} if it is generated by $R$.  

\subsection{}  For each reflection $r \in R$ let $c_r$ be a formal variable such that $c_{g r g^{-1}}=c_r$ for all $r \in R$ and $g \in G$, and choose $\alpha_r \in V^*$ such that the zero set of $\alpha_r$ is the fix space of $r$.  Let $A=\CC[c_r]_{r \in R}$ be the ring of polynomials generated by these variables (thus $A$ is a polynomial ring in a set of variables corresponding to the conjugacy classes of reflections).  Write $A[V]=A \otimes_\CC \CC[V]$ for the ring of polynomial functions on $V$ with coefficients in $A$.  For each $y \in V$ define a \emph{Dunkl operator} on $A[V]$ by 
\begin{equation} \label{Dunkl op} 
y(f)=\partial_y(f)-\sum_{r \in R} c_r \la \alpha_r,y \ra \frac{f-r(f)}{\alpha_r} \quad \hbox{for $f \in A[V]$,}
\end{equation} where $\partial_y$ is the partial derivative of $f$ in the direction $y$ and $\la \cdot,\cdot \ra$ is the natural pairing between $V^*$ and $V$.  Each $f \in A[V]$ defines a multiplication operator $h \mapsto fh$ on $A[V]$, and the \emph{rational Cherednik algebra} $H=H(G,V)$ determined by these data is the $A$-subalgebra of $\mathrm{End}_A(A[V])$ generated by $G$, $A[V]$, and the Dunkl operators $y \in V$.  
 
A routine computation shows that these operators satisfy the relations
\begin{equation} \label{rel 1}
g y g^{-1}=g(y) \quad g x g^{-1}=g(x) \quad \hbox{for $g \in G$, $x \in V^*$, and $y \in V$,}
\end{equation} and by induction on the degree of $f$
\begin{equation} \label{rel 2}
yf-fy=\partial_y(f)-\sum_{r \in R} c_r \la \alpha_r,y \ra \frac{f-r(f)}{\alpha_r} r \quad \hbox{for $y \in V$ and $f \in A[V]$.}
\end{equation} Using these relations one then checks
$$[[y_1,y_2],f]=[y_1,[y_2,f]]+[[y_1,f],y_2]=0$$ which implies 
\begin{equation}
y_1 y_2=y_2 y_1 \quad \hbox{for all $y_1,y_2 \in V$.}
\end{equation}  In fact $H$ is generated by $A[V]$, $W$, and $A[V^*]$ subject to \eqref{rel 1} and the special case of \eqref{rel 2} in which $f \in V^*$ is linear, in which case it may be written as
\begin{equation} \label{xy rel}
yx-xy=\la x,y \ra-\sum_{r \in R} c_r \la \alpha_r,y \ra \la x,\alpha_r^\vee \ra r
\end{equation} where $x-r(x)=\la x,\alpha_r^\vee \ra \alpha_r$ determines $\alpha_r^\vee \in V$. 

Multiplication induces an isomorphism
\begin{equation} \label{triangular}
A[V] \otimes AW \otimes A[V^*] \rightarrow H
\end{equation} called the \emph{triangular decomposition} of $H$.

\subsection{}  Let $\{S^\lambda \ | \ \lambda \in \Lambda \}$ be the set of irreducible representations of $\CC W$, where $\Lambda$ is an index set. To avoid a profusion of subscripts, we abuse notation and write also $S^\lambda$ for its extension to $A W$. Write $A[V^*] \rtimes W$ for the subalgebra of $H$ generated by the Dunkl operators $y \in V$ and the group $W$.  The \emph{standard module} corresponding to $\lambda \in \Lambda$ is
\begin{equation}
\Delta(\lambda)=\mathrm{Ind}^H_{A[V^*] \rtimes W} (S^\lambda)
\end{equation}  where the $A[V^*] \rtimes W$-module structure on $S^\lambda$ is determined by $y S^\lambda=0$ for all $y \in V$.  Thanks to the triangular decomposition \eqref{triangular} there is an isomorphism of $A[V] \rtimes W$-modules
\begin{equation}
A[V] \otimes_A S^\lambda \rightarrow \Delta(\lambda) 
\end{equation} and via this isomorphism the Dunkl operators act according to the formula
\begin{equation} \label{spin Dunkls}
y(f \otimes v)=\partial_y(f) \otimes v-\sum_{r \in R} c_r \la \alpha_r,y \ra \frac{f-s(f)}{\alpha_r} \otimes s(v) \quad \hbox{for $y \in V$, $f \in A[V]$, and $v \in S^\lambda$.}
\end{equation}

\subsection{} There is a reparametrization that simplifies the expression of many numbers arising naturally in the study of the Cherednik algebra (in particular, the eigenvalues of the monodromy of the connections corresponding to the standard modules).  Let $\mathcal{A}$ be the set of hyperplanes $H$ in $V$ of the form $H=\mathrm{fix}(r)$ for some $r \in R$.  For each $H \in \mathcal{A}$ choose $\alpha_H \in V^*$ with $H$ equal to the zero set of $\alpha_H$.  The subgroup $W_H=\{w \in W \ | \ w(v)=v \ \hbox{if $v \in H$} \}$ is a cyclic subgroup, and we write $W_H^\vee$ for its character group.  Let $n_H=|W_H|$ be the size of $W_H$ and for $\chi \in W_H^\vee$ let 
$$e_{H,\chi}=\frac{1}{n_H}\sum_{w \in W_H} \chi(w^{-1}) w \in \CC W_H$$ be the corresponding primitive idempotent.  Define $c_{H,\chi}$ by
\begin{equation} \label{cH def}
c_{H,\chi} n_H=\sum_{r \in W_H-\{1\}} c_r(1-\chi(r)).
\end{equation} In particular $c_{H,\mathrm{triv}}=0$, and $A$ may be viewed as a polynomial ring in the variables $c_{H,\chi}$ (modulo the relations $c_{H,\chi}=c_{wH,w\chi w^{-1}}$ for $w \in W$).  Using the relation
$$w=\sum_{\chi \in W_H^\vee} \chi(w) e_{H,\chi} \quad \hbox{for $w \in W_H$,}$$ the formula for Dunkl operators becomes
$$y(f)=\partial_y(f)-\sum_{H \in \mathcal{A}} \frac{\la \alpha_H,y \ra}{\alpha_H} \sum_{\chi \in W_H^\vee-\{1 \}}  c_{H,\chi} n_H e_{H,\chi},$$ which is, up to a sign, the formula in \cite{DuOp}, equation (5).  

For each $H \in \mathcal{A}$ fix an eigenvector $\alpha_H^\vee \notin H$ for $W_H$.  In terms of the parameters $c_{H,\chi}$ the relation \eqref{xy rel} is
\begin{equation}
yx-xy=\la x,y \ra - \sum_{H \in \mathcal{A}} \frac{ \la \alpha_H,y \ra \la x,\alpha_H^\vee \ra}{\la \alpha_H,\alpha_H^\vee \ra}  \sum_{\chi \in W_H^\vee}  (c_{H,\chi \otimes \mathrm{det}^{-1}}-c_{H,\chi}) n_H e_{H,\chi}
\end{equation}

\subsection{} We extend complex conjugation to $A=\CC[c_{H,\chi}]$ be declaring $\overline{c_{H,\chi}}=c_{H,\chi}$. Fix  a positive definite Hermitian inner product $\la \cdot,\cdot \ra$ on $S^\lambda$ and mutually inverse $W$-equivariant conjugate linear isomorphisms $V \rightarrow V^*$ and $V^* \rightarrow V$, written $y \mapsto y^*$ and $x \mapsto x^*$.  Then $\la \cdot, \cdot \ra$ has a unique extension, also denoted $\la \cdot,\cdot \ra$, to $\Delta(\lambda)$ determined by the following rules: 
\begin{enumerate}
\item[(a)] $\la \cdot,\cdot \ra$ is bi-additive, $A$-linear in the second variable, and $A$-conjugate linear in the first variable with respect to the extension of complex conjugation to $A$ that fixes the variables $c_{H,\chi}$,
\item[(b)] $\la x f, g \ra=\la f, x^*g \ra$ for all $x \in V^*$ and $f,g \in \Delta(\lambda)$.
\end{enumerate}  

\subsection{}  We will consider two types of extensions of scalars: first, writing $F=\mathrm{Frac}(A)$ for the fraction field of $A$, we write $H_F=F \otimes_A H$ for the \emph{generic} Cherednik algebra, and similarly $\Delta_F(\lambda)=F \otimes_R \Delta(\lambda)$ and $\la \cdot,\cdot \ra_F$ for the $F$-conjugate linear extension of $\la \cdot,\cdot \ra$ to  $\Delta_F(\lambda)$.  Second, given a specialization $A \rightarrow \CC$ of the variables $c_r$ (or $c_{H,\chi}$) to complex numbers, we will write $H_c=\CC \otimes_A H$ and $\Delta_c(\lambda)=\CC \otimes_A \Delta(\lambda)$ for the corresponding specializations.  We think of the symbol $c$ as standing for this specialization, or equivalently, for a set of complex numbers indexed by conjugacy classes of reflections (or conjugacy classes of characters of rank one parabolic subgroups).  In case the specialization is such that the variables $c_{H,\chi}$ are all real, the contravariant form also specializes and we write $\la \cdot,\cdot \ra_c$ for its specialization.

\subsection{}  Suppose that we have specialized the variables to complex numbers $c$.  \emph{Category $\OO_c$} is the subcategory of the category $H_c \mathrm{-mod}$ of finitely generated $H_c$-modules on which each Dunkl operator $y \in V$ acts locally nilpotently.  Thanks to \eqref{spin Dunkls} each standard module $\Delta_c(\lambda)$ is in $\OO_c$.  In fact, the quotient $L_c(\lambda)$ of the standard module $\Delta_c(\lambda)$ by its radical is simple and this gives a complete list of inequivalent irreducible objects in $\OO_c$.  Furthermore, the radical of $\Delta_c(\lambda)$ coincides with the radical of the form $\la \cdot,\cdot \ra_c$.  Therefore the contravariant form descends to a non-degenerate form on $L_c(\lambda)$, and Cherednik has posed the problem of deciding when this form is positive definite. The study of this problem began with the paper \cite{EtSt}. 

\subsection{}  From now on, we will assume that $G=W$ is a monomial group.  The precise definitions follow.  Fix positive integers $r$ and $n$.  Let $W=G(r,1,n)$ be the group of $n$ by $n$ matrices such that the entries are either $0$ or a power of $e^{2 \pi \sqrt{-1}/r}$, and there is exactly one non-zero entry in each row and each column.  Then $W$ is a group of matrices acting on $V=\CC^n$, and we write $y_1,\dots,y_n$ for the standard basis of $V$.

There are two $W$-orbits of reflecting hyperplanes: writing $x_1,\dots,x_n$ for the standard basis of $V^*$, there those of the form $x_i=0$ (for which $n_H=r$) and those of the form $x_i=\zeta^l x_j$ (for which $n_H=2$).  Write $s_{ij}$ for the permutation matrix interchanging $i$ and $j$ and fixing all other coordinates, and $\zeta_i$ for the matrix that multiplies the $i$th coordinate by $\zeta=e^{2 \pi \sqrt{-1}/r}$ and fixes all other coordinates.  We will write $e_{ij}=\frac{1}{r} \sum_{l=0}^{r-1} \zeta^{-lj} \zeta_i^l$, and leave the other idempotents unnamed.  

\subsection{}  In the case $W=G(r,1,n)$ the relations for the rational Cherednik algebra may be written in the following extremely explicit form.  Let $c_0$ and $d_1,\dots,d_{r-1} \in \CC$ be variables and let $A=\CC[c_0,d_1,\dots,d_{r-1}]$.  The Cherednik algebra $H$ for $W$ is generated by the algebras $A[y_1,\dots,y_n]$, $R[x_1,\dots,x_n]$, and $A W$, subject to the relations $w f w^{-1}=w(f)$ for $f \in A[y_1,\dots,y_n]$ or $f \in A[x_1,\dots,x_n]$ and $w \in W$,
 \begin{equation}
 y_i x_i=x_i y_i-c_0 \sum_{\substack{ 1 \leq j \neq i \leq n \\ 0 \leq l \leq r-1}} \zeta_i^l s_{ij} \zeta_i^{-l}-\sum_{l=0}^{r-1} (d_l-d_{l-1}) e_{il}
 \end{equation} for $1 \leq i \leq n$, and
\begin{equation}
y_i x_j=x_j y_i+ c_0 \sum_{l=0}^{r-1} \zeta^{-l} \zeta_i^l s_{ij} \zeta_i ^{-l}
\end{equation} for $1 \leq i \neq j \leq n$.  To define $d_l$ for all $l \in \ZZ$ we specify $d_0$ by the relation $d_0+d_1+\cdots+d_{r-1}=0$ and impose $d_i=d_j$ if $i=j \ \mathrm{mod} \ r$.  In terms of the parameters $c_r$ attached to conjugacy classes of reflections, $c_0$ is the parameter for the conjugacy class of transpositions, and
$$d_j=\sum_{1 \leq m \leq r-1} \zeta^{jm} c_m,$$ where $c_m$ is the conjugacy class of reflections containing $\zeta_1^m$. Comparing with \eqref{cH def}, this implies that $c_0$ and $d_0,d_1,\dots,d_{r-1}$ are all real if and only if the $c_{H,\chi}$'s are all real.  Whenever we make use of the specialized contravariant form, we will assume the parameters $c_0$ and $d_l$ are all real.
 
\subsection{}  \label{partitions subsection} A \emph{partition} $
\lambda=\lambda_1 \geq \lambda_2 \geq \cdots$ is a weakly decreasing sequence of integers such that $\lambda_n=0$ for $n$ large enough.  Given a positive integer $r$, an $r$-partition is a sequence $\lambda^\bullet=(\lambda^0,\lambda^1,\dots,\lambda^{r-1})$ of $r$ partitions.  The \emph{size} of an $r$-partition $\lambda^\bullet$ is the sum $|\lambda^\bullet|=\sum_{i,j} \lambda^i_j$, and an \emph{$r$-partition of $n$} is an $r$-partition $\lambda^\bullet$ with $|\lambda^\bullet|=n$.   We picture partitions and $r$-partitions as \emph{Young diagrams}: collections of boxes stacked in a corner, as in \eqref{tableau} (but without the numbers).  A \emph{tableau} on an $r$-partition $\lambda^\bullet$ is a function $T$ from the boxes of $\lambda^\bullet$ to the integers.  A \emph{standard tableau} on an $r$-partition $\lambda^\bullet$ of $n$ is a bijection from the boxes of $\lambda^\bullet$ to $\{1,2,\dots,n\}$ such that the enties in each $\lambda^i$ are strictly increasing left to right and top to bottom.  An example of a standard tableau on the $2$-partition $\lambda^\bullet=((3,2),(2,2))$ of $9$ is
\begin{equation} \label{tableau}
\left(\begin{ytableau}
2 & 4& 6 \\ 3 & 9 
\end{ytableau} \  , \ 
\begin{ytableau}
1 & 5 \\ 7 & 8 
\end{ytableau} \right).
\end{equation} 

Given a box $b \in \lambda^\bullet$, define $\beta(b)=l$ if $b \in \lambda^l$ and $\mathrm{ct}(b)=j-i$ if $b$ is in the $i$th row and $j$th column of $\lambda^l$.  For the example \eqref{tableau} we have $\beta(T^{-1}(5))=1$ and $\mathrm{ct}(T^{-1}(5))=1$.  

\subsection{}  Let $W=G(r,1,n)$.  The \emph{Jucys-Murphy elements} of the group algebra $\CC W$ are 
\begin{equation}
\phi_i=\sum_{\substack{1 \leq j < i \\ 0 \leq l \leq r-1}} \zeta_i^l s_{ij} \zeta_i^{-l} \quad \hbox{for $1 \leq i \leq n$.}
\end{equation}  Together with the elements $\zeta_i$ of $W$ they generate a subalgebra of $\CC W$ that acts diagonalizably on every $W$-module.  There is a bijection $\lambda^\bullet \mapsto S^{\lambda^\bullet}$ from the set of $r$-partitions of $n$ to the set of irreducible $W$-modules such that $S^{\lambda^\bullet}$ has a basis $v_T$ indexed by standard Young tableaux $T$ on $\lambda^\bullet$, and $v_T$ is determined up to scalars by the equations
\begin{equation}
\phi_i v_T=r \mathrm{ct}(T^{-1}(i)) v_T \quad \text{and} \quad \zeta_i v_T=\zeta^{\beta(T^{-1}(i))} v_T \quad \hbox{for $1 \leq i \leq n$.}
\end{equation}  We fix a $W$-invariant positive definite Hermitian form on each $S^{\lambda^\bullet}$ and assume that the norm of $v_T$ with respect to this form is $1$.  For the groups $G(2,1,n)$, it seems these versions of Jucys-Murphy elements were first written down in \cite{Che2}.

\subsection{}  \label{t def} As in \cite{DuOp} Definition 3.7, we put 
\begin{equation}
z_i=y_i x_i+c_0 \phi_i \quad \hbox{for $1\leq i \leq n$.}
\end{equation}  Together with the elements $\zeta_i$ they generate a commutative algebra $\ttt$ of $H_c$, and Theorem 5.1 of \cite{Gri2} states that $\ttt$ acts on each standard module $\Delta_c(\lambda^\bullet)$ in an upper-triangular fashion.

\subsection{} \label{gamma subsection}    We define a partial order on the boxes of $\lambda^\bullet$ by: $b \leq b'$ if $T(b)<T(b')$ for all standard Young tableaux $T$ on $\lambda^\bullet$.  Thus $b \leq b'$ if and only if $\beta(b) =\beta(b')$ and $b$ is (weakly) up and to the left of $b'$.  

We write $\Gamma=\Gamma(\lambda^\bullet)$ for the set of pairs $(P,Q)$ of tableaux on $\lambda^\bullet$ such that $P$ is a bijection from the boxes of $\lambda^\bullet$ to the set $\{1,2,\dots,n\}$, $Q$ is a filling of the boxes of $\lambda^\bullet$ by non-negative integers such that  if $b<b'$ then $Q(b) \leq Q(b')$, with  $Q(b)=Q(b')$ implying $P(b)>P(b')$.  Then Theorem 5.1 of \cite{Gri2} implies that there is a $\ttt$-eigenbasis $f_{P,Q}$ of $\Delta(\lambda^\bullet)$ such that 
$$\zeta_i f_{P,Q}=\zeta^{\beta(P^{-1}(i))} f_{P,Q}$$ and
$$z_i f_{P,Q}=(Q (P^{-1}(i))+1-(d_{\beta(P^{-1} (i))}-d_{\beta(P^{-1} (i))-Q(P^{-1}(i))-1})-r \mathrm{ct}(P^{-1}(i)) c_0) f_{P,Q}.$$  Then $f_{P,Q}$ is a polynomial function on $\CC^n$ with values in $S^{\lambda^\bullet}$, which we will refer to a a (non-symmetric) \emph{Specht-valued Jack polynomial}. The indexing here is related to that in the paper \cite{Gri2} as follows: for a pair $(\mu,T)$ consisting of a standard Young tableau $T$ on $\lambda^\bullet$ and $\mu \in \ZZ_{\geq 0}^n$, we let $w_\mu$ be the longest element of $S_n$ such that $w_\mu(\mu)$ is non-decreasing and define $P=w_\mu^{-1} T$ and $Q(b)=\mu_{P(b)}$.  Note that we used an unorthodox convention for standard Young tableaux in \cite{Gri2}, regarding them as functions from $\{1,2,\dots,n\}$ to the boxes of $\lambda^\bullet$ (we have now come to our senses).  This gives a bijection from the set of pairs $(\mu,T)$ as above to $\Gamma$. 

\section{The trigonometric presentation of $H$}

Here we give another presentation of $H$ which is adapted to the
application of intertwining operators to the classification of diagonalizable modules in the next section.

\subsection{The affine Weyl semigroup} \label{awsg}  Let $W_{\geq 0}=\ZZ_{\geq 0}^n
\rtimes S_n$.  It contains the elements $s_1,\dots,s_{n-1}$ and
$\Phi=\epsilon_n s_{n-1} \cdots s_2 s_1$, so that $s_1,\dots,s_{n-1}$
satisfy the usual Coxeter relations, and interact with $\Phi$ via the
relations
\begin{equation} \label{phi relations}
\Phi s_i=s_{i-1} \Phi \ \hbox{for $2 \leq i \leq n-1$ and} \ \Phi^2
s_1=s_{n-1} \Phi^2.
\end{equation}  In fact, the abstract semigroup with generators
$s_1,\dots,s_{n-1}$ and $\Phi$, together with the Coxeter relations
and \eqref{phi relations} is isomorphic to $W_{\geq 0}$, as we now
sketch.  

Letting $G$ be this semigroup, it follows that there is a map
$G \rightarrow W_{\geq 0}$ and thus that $s_1,\dots,s_{n-1}$ generate
a copy of $S_n$ inside $G$.  Define $\epsilon_n=\Phi s_1 \cdots
s_{n-1}$.  The relations in \eqref{phi relations}
imply that $s_i \epsilon_n=\epsilon_n s_i$ for $1 \leq i \leq n-2$,
and therefore we may unambiguously define $\epsilon_i=w \epsilon_n
w^{-1}$ for each $1 \leq i \leq n-1$ and any $w \in S_n$ with
$w(n)=i$.  It follows from this definition that $w \epsilon_i
w^{-1}=\epsilon_{w(i)}$ for all $1 \leq i \leq n$ and $w \in S_n$.
Again using \eqref{phi relations}, a direct calculation shows that
$\epsilon_n \epsilon_1=\epsilon_1 \epsilon_n$, and hence for all $1
\leq i \neq j \leq n$ choosing $w \in S_n$ with $w(1)=i$ and $w(n)=j$
gives $\epsilon_i \epsilon_j=w \epsilon_1 \epsilon_n w^{-1}=w
\epsilon_n \epsilon_1 w^{-1}=\epsilon_j \epsilon_i$.  It follows from
this that there is a map $W_{\geq 0} \rightarrow G$ inverse to the
previous one.

\subsection{The Dunkl-Opdam subalgebra}

The \emph{Dunkl-Opdam} subalgebra $\ttt$ of $H$ is the (commutative, as proved in \cite{DuOp})
subalgebra of $H$ generated by $z_1,\dots,z_n$ and
$\zeta_1,\dots,\zeta_n$.  By the PBW theorem it is isomorphic to the
polynomial ring in the variables $z_1,\dots,z_n$ tensored with the
group algebra of $(\ZZ/r \ZZ)^n$.  Define an automorphism $\phi$ of $\ttt$ by
\begin{equation}
\phi(\zeta_i)=\zeta_{i+1} \quad \hbox{for $1 \leq i \leq n-1$ and}
\quad
\phi(\zeta_n)=\zeta^{-1} \zeta_1
\end{equation} and
\begin{equation}
\phi(z_i)=z_{i+1} \quad \hbox{for $1 \leq i \leq n-1$ and}
\quad
\phi(z_n)=z_1+1-\sum_{j=0}^{r-1} (d_{j-1}-d_{j-2}) e_{1j}.
\end{equation}  Put $\Phi=x_n s_{n-1} \cdots s_1$ and $\Psi=y_1 s_1
\cdots s_{n-1}$.  By Proposition 4.3 and Lemma 5.3 of \cite{Gri1}, 
\begin{equation} \label{phipsit}
f \Phi=\Phi \phi(f) \quad \text{and} \quad f \Psi=\Psi \phi^{-1}(f)
\end{equation} for all $f \in \ttt$, and
\begin{equation} \label{ghrels}
z_i s_j=s_j z_i \ \hbox{if $j \neq i,i+1$, while} \quad z_i
s_i=s_i z_{i+1}-c_0 \pi_i \ \hbox{for $1 \leq i \leq n-1$,}
\end{equation} where
$$\pi_i=\sum_{l=0}^{r-1} \zeta_i^l \zeta_{i+1}^{-l}.$$  Thus (as observed in \cite{Dez1} and \cite{Dez2}) the
subalgebra $H_{\mathrm{gr}}$ of $H$ generated by $\ttt$ and $W=G(r,1,n)$ is isomorphic
to the generalized graded affine Hecke algebra for $G(r,1,n)$ defined
in \cite{RaSh}.  The structure of an $H_{\mathrm{gr}}$-module may be
put on a vector space by definining an action of $\ttt$ together with
operators $s_i$ satisfying the Coxeter relations together with
\eqref{ghrels} and $w \zeta_i=\zeta_{w(i)} w$ for all $w \in S_n$ and
$1 \leq i \leq n$.

\subsection{The trigonometric presentation}

$H$ contains the commutative subalgebra $\ttt$, elements $\Phi$,
$\Psi$, and $s_1,\dots,s_{n-1}$.  These satisfy the following
relations: (1) $\ttt$ and $s_1,\dots,s_{n-1}$ generate a graded affine
Hecke algebra $H_{\mathrm{gr}}$ for the group $G(r,1,n)$ inside $H$, (2) $\Phi$ and
$s_1, \dots, s_{n-1}$ generate an affine Weyl semigroup inside $H$,
(3) $\Psi$ and $s_1,\dots,s_{n-1}$ generate an affine Weyl semigroup
inside $H$ with relations
\begin{equation} \label{psi relations}
\Psi s_i=s_{i+1} \Psi \ \hbox{for $1 \leq i \leq n-2$ and} \ \Psi^2
s_{n-1}=s_1 \Psi^2,
\end{equation} and the following relations hold:
\begin{equation} \label{phipsizeta}
\zeta_i \Phi=\Phi \phi(\zeta_i), \quad \zeta_i \Psi=\Psi \phi^{-1}(\zeta_i),
\end{equation}
\begin{equation} \label{phipsi relations}
\Psi \Phi=z_1, \quad \Phi
\Psi=z_n-\kappa+\sum_{j=0}^{r-1} (d_j-d_{j-1}) e_{nj}, \quad
\text{and} \quad \Psi s_{n-1} \Phi=\Phi s_1 \Psi+c_0 \sum_{0 \leq l
\leq r-1} \zeta^{-l}
\zeta_1^l \zeta_n^{-l}.
\end{equation} In fact, this constitutes a presentation for $H$, as
we will see in the remainder of this section.  Constructing an $H$-module may therefore be done as
follows: construct an $H_{\mathrm{gr}}$-module together with operators $\Phi$ and
$\Psi$ satisfying the relations \eqref{phi relations}, \eqref{psi
relations}, \eqref{phipsizeta}, and \eqref{phipsi relations}.

Note that in case $W=S_n$ and $\kappa=0$ we have  $$\Psi \Phi-\Phi \Psi=z_1-z_n$$ and
$$(z_1-z_n) \Phi-\Phi(z_1-z_n)=$$

\begin{theorem} \label{trig pres} 
Let $A$ be the algebra generated by $H_{\mathrm{gr}}$
together with elements $\Phi$ and $\Psi$ satisfying \eqref{phi relations}, \eqref{psi
relations}, \eqref{phipsizeta}, and \eqref{phipsi
relations}.
The natural map $A \rightarrow H$ is an isomorphism.
\end{theorem}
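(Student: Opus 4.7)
The plan is to prove the theorem in two steps: surjectivity of the natural map $\pi \colon A \to H$ is essentially immediate, while injectivity requires a PBW-type spanning argument.

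For surjectivity, $H$ is generated as an $R$-algebra by $W$ together with $x_1, \ldots, x_n, y_1, \ldots, y_n$. The group $W = G(r,1,n)$ is generated by $s_1, \ldots, s_{n-1}, \zeta_1, \ldots, \zeta_n$, all of which lie in $H_{\mathrm{gr}} \subseteq A$. Inverting the defining identities $\Phi = x_n s_{n-1} \cdots s_1$ and $\Psi = y_1 s_1 \cdots s_{n-1}$ yields $x_n = \Phi s_1 s_2 \cdots s_{n-1}$ and $y_1 = \Psi s_{n-1} \cdots s_1$ in $\pi(A)$, and conjugating by permutations in $S_n$ produces the remaining $x_i, y_i$.

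For injectivity, I would construct a spanning set of $A$ matching the PBW basis $\{x^\mu w y^\nu\}$ of $H$. Using the surjectivity formulas, lift each $x_i, y_i$ to elements $\widetilde{x}_i, \widetilde{y}_i \in A$. The key claim is that $A$ is spanned over $R$ by the set $\{\widetilde{x}^\mu w \widetilde{y}^\nu : \mu, \nu \in \ZZ_{\geq 0}^n,\ w \in W\}$. Granted the claim, injectivity is immediate because $\pi$ sends these elements to an $R$-basis of $H$, so they are $R$-linearly independent in $A$. To prove the claim, first observe (by the abstract argument of Section~\ref{awsg} applied inside $A$) that the subalgebra generated by $\Phi, s_1, \ldots, s_{n-1}$ is a quotient of $\CC[W_{\geq 0}]$ and is therefore spanned by $\widetilde{x}^\mu \sigma$ for $\sigma \in S_n$; adjoining the $\zeta_i$ and using \eqref{phipsizeta} upgrades this to a spanning set $\widetilde{x}^\mu w$ with $w \in W$, and the analogous statement holds for $\Psi$. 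Then the cross-relations \eqref{phipsi relations}, particularly $\Psi \Phi = z_1 \in \ttt$ (which eliminates a $\Psi$ immediately to the left of a $\Phi$, absorbing the resulting element of $\ttt$ into the middle $H_{\mathrm{gr}}$-factor) and the braid-like identity $\Psi s_{n-1} \Phi = \Phi s_1 \Psi + c_0 \sum_{l=0}^{r-1} \zeta^{-l} \zeta_1^l \zeta_n^{-l}$ (which swaps $\Phi$ and $\Psi$ separated by a single Coxeter generator), together with the semidirect-product relations \eqref{phipsizeta}, permit the reduction of any monomial in the generators of $A$ to the claimed normal form.

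The main obstacle is precisely this last reduction, which amounts to a Bergman diamond-lemma verification: one must check that the listed relations are genuinely sufficient to rewrite every word without enlarging the spanning set, and each application of the braid-like swap produces correction terms in $H_{\mathrm{gr}}$ whose subsequent interactions with remaining $\Phi, \Psi$ factors need careful bookkeeping. A possibly cleaner route is to introduce a filtration on $A$ by the total $\Phi$-plus-$\Psi$ degree, pass to the associated graded (where $\Psi\Phi = z_1$ degenerates to $\Psi \Phi = 0$ in leading order and $A$ reduces to a polynomial-type extension of $H_{\mathrm{gr}}$), establish the PBW statement on the associated graded, and lift back to $A$ via a standard flatness argument by comparing with the analogous filtration of $H$ induced by $\pi$.
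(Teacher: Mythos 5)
Your overall skeleton matches the paper's: surjectivity is immediate, and injectivity is reduced to showing that a PBW-type set spans $A$, which combined with the PBW theorem for $H$ forces the natural map to be an isomorphism. But the decisive step --- actually proving the spanning claim --- is exactly where your argument has a genuine gap, and you say so yourself (``the main obstacle is precisely this last reduction''). Neither of your two proposed routes closes it. The direct rewriting route founders because the relations you list only handle the local patterns $\Psi\Phi$, $\Psi\zeta_i\Phi$ and $\Psi s_{n-1}\Phi$; a general word interleaves many $\Phi$'s, $\Psi$'s and elements of $H_{\mathrm{gr}}$, the correction term $z_1=\Psi\Phi$ is itself not in normal form (it involves both an $x$ and a $y$ once unwound), and you give no termination or confluence argument for the resulting rewriting system. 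The filtration route is also only gestured at: the associated graded of an algebra presented by generators and relations is merely a quotient of the algebra with degenerated relations, so ``establish PBW on the associated graded and lift by flatness'' presupposes essentially what is to be proved.

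The paper avoids all of this with one observation you missed: instead of rewriting words in $\Phi$ and $\Psi$, immediately pass to $x_n=\Phi s_1\cdots s_{n-1}$, $y_1=\Psi s_{n-1}\cdots s_1$ and their $S_n$-conjugates, and use \eqref{phipsi relations} to show that \emph{every} commutator $y_ix_j-x_jy_i$ lies in $\CC W$. Concretely: $\Psi\Phi=z_1$ gives $y_1x_1=z_1$; the graded Hecke relations \eqref{ghrels} give by induction $z_i=y_ix_i+a_i$ with $a_i\in\CC W$; the relation $\Phi\Psi=z_n-\kappa+\sum(d_j-d_{j-1})e_{nj}$, together with $\Phi\Psi=x_ny_n$, gives $y_nx_n=x_ny_n+b_n$ with $b_n\in\CC W$, and conjugation handles all $y_ix_i$; finally the third relation in \eqref{phipsi relations} plus \eqref{phi relations} and \eqref{psi relations} gives $y_1x_n=x_ny_1+b_{1n}$ with $b_{1n}\in\CC W$, and conjugation handles all $y_ix_j$. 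Once the commutators are known to land in $\CC W$ (degree zero in the $x$'s and $y$'s), closure of the span of $\{x^ay^bw\}$ under left multiplication terminates by a trivial induction on degree --- no diamond lemma is needed. This is the idea your proposal would need to supply to become a proof.
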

\begin{proof}
Define elements $x_n,y_1 \in A$ by $x_n=\Phi s_1 \cdots s_{n-1}$ and
$y_1=\Psi s_{n-1} \cdots s_1$.  Then put $x_i=w x_n w^{-1}$ and $y_i=v
y_1 v^{-1}$ where $w,v \in S_n$ are chosen with $w(n)=i=v(1)$.  The
various $x_i$'s commute with one another by the discussion in \ref{awsg}, and
by symmetry the $y_i$'s commute.  Furthermore, the
algebra $A$ is generated by the group $W$ together with
$x_1,\dots,x_n$ and $y_1,\dots,y_n$.  We will show that it is spanned
by the set of all words $x_1^{a_1}\cdots x_n^{a_n} y_1^{b_1} \cdots
y_n^{b_n} w$ with $a_i,b_i \in \ZZ_{\geq 0}$ and $w \in W$.  From this
together with the PBW theorem for $H$ it will follow that the natural
map from $A$ to $H$ is an isomorphism.

It suffices to show that the span of the set of words as above is closed under
left multiplication by $x_i$'s, $y_i$'s, and $w$'s.  This is clear for
$x_i$'s and easy for $w$'s.  We will show how to reorder a product
$y_i x_j$.  First observe that by the definitions of $x_1$,$y_1$ and
the relation $\Psi \Phi=z_1$, $$y_1 x_1=\Psi \Phi=z_1.$$  By using the
graded Hecke algebra relations between $z_i$ and $s_i$ it follows
by induction on $i$ that
$$z_i=y_i x_i+a_i \quad \hbox{for some $a_i \in \CC W$.}$$ In particular 
\begin{equation}
y_n x_n+a_n=z_n=\Phi \Psi+\kappa-\sum (d_j-d_{j-1}) e_{nj}=x_n y_n +\kappa-\sum (d_j-d_{j-1}) e_{nj}.
\end{equation}  This proves that $y_n x_n=x_n y_n+b_n$ for some $b_n
\in \CC W$.  Conjugating by some $w \in S_n$ with $w(n)=i$ gives $y_i x_i=x_i
y_i+b_i$ for some $b_i \in \CC W$.   Using the last relation in \eqref{phipsi relations}, the
relations \eqref{phi relations} and \eqref{psi relations} and
the definitions of $y_1=\Psi s_{n-1} \cdots s_1$ and $x_n=\Phi s_1 \cdots s_{n-1}$ allows
one to rewrite $y_1 x_n=x_n y_1+b_{1n}$ for some $b_{1n} \in \CC W$,
and conjugating by $w \in S_n$ with $w(1)=i$ and $w(n)=j$ gives $y_i
x_j=x_j y_i+b_{ij}$ for some $b_{ij} \in \CC W$, finishing the proof.

\end{proof} 

\section{Specht-valued Jack polynomials}

For $\mu, \nu \in \ZZ_{\geq 0}^n$, write $\mu > \nu$ if either
$\mu_+ >_d \nu_+$, where $\mu_+$ and $\nu_+$ are the partition
rearrangements of $\mu$ and $\nu$ and $>_d$ denotes dominance order,
or $\mu_+=\nu_+$ and $w_\mu > w_\nu$ in Bruhat order.  Extend this to
a partial order on pairs $(\mu,T)$ by ignoring $T$: thus $(\mu,T) \geq
(\nu,S)$ exactly if $\mu \geq \nu$.  The following is Theorem~5.1 of
\cite{Gri2}; the polynomials it constructs are
$S^{\lambda^\bullet}$-valued generalizations of non-symmetric Jack polynomials. We use them to construct bases for the irreducible unitary representations in $\OO_c$.

\begin{theorem} \label{Upper triangular}
Let $\lambda$ be an $r$-partition of $n$, $\mu \in \ZZ_{\geq 0}^n$, and let $T$ be a standard tableau on $\lambda$.  Put $v_T^\mu=w_\mu^{-1}.v_T$ and recall the definitions of $\beta$ and ct given in \ref{partitions subsection}.
\begin{enumerate}
\item[(a)]  The action of $\zeta_i$ and $z_i$ on $\Delta(\lambda^\bullet)$ are given by
\begin{align*}
\zeta_i.x^\mu v_T^\mu=\zeta^{\beta(T^{-1}(w_\mu(i)))-\mu_i} x^\mu v_T^\mu
\end{align*}
and
\begin{align*} \label{z spectrum}
z_i.x^\mu v_T^\mu&=\left(  \mu_i+1-(d_{\beta(T^{-1}(w_\mu(i)))}-d_{\beta(T^{-1}(w_\mu(i)))-\mu_i-1})-c_0 r \text{ct}(T^{-1}(w_\mu(i))) \right) x^\mu v_T^\mu \\
&+\sum_{(\nu,S) < (\mu,T)} c_{\nu,S} x^\nu v_{S}^\nu.
\end{align*}
\item[(b)]  Assuming that scalars are extended to $F=\CC(c_0,d_1,d_2,\dots,d_{r-1})$, for each $\mu \in \ZZ_{\geq 0}^n$ and $T \in \text{SYT}(\lambda)$ there exists a unique $\ttt$ eigenvector $f_{\mu,T} \in \Delta(\lambda^\bullet)$ such that
\begin{equation*}
f_{\mu,T}=x^\mu v_T^\mu+\text{lower terms}.
\end{equation*}  
\end{enumerate}
The $\ttt$-eigenvalue of $f_{\mu,T}$ is determined by the formulas in part (a).
\end{theorem}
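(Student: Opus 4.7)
The plan is to combine the trigonometric presentation of Theorem~\ref{trig pres} with induction on the partial order on pairs $(\mu, T)$, and then extract Part (b) from Part (a) by elementary linear algebra over $F$.

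For Part (a), the $\zeta_i$-eigenvalue is immediate from the description of $\Delta(\lambda^\bullet) \cong \CC[V] \otimes S^{\lambda^\bullet}$ as a $W$-module: $\zeta_i$ scales $x^\mu$ by $\zeta^{-\mu_i}$, while the identity $\zeta_i w_\mu^{-1} = w_\mu^{-1} \zeta_{w_\mu(i)}$ reduces the action on $v_T^\mu = w_\mu^{-1} v_T$ to the Jucys--Murphy eigenvalue $\zeta_{w_\mu(i)} v_T = \zeta^{\beta(T^{-1}(w_\mu(i)))} v_T$. For the $z_i$-eigenvalue, I would handle the minimal-order case by direct computation: since $y_i$ kills $1 \otimes v_T$, the commutator $y_i x_i v_T$ is given by \eqref{xy rel}, which combined with $+c_0 \phi_i$ reduces to a diagonal combination via the Jucys--Murphy identity $\phi_i v_T = r\,\mathrm{ct}(T^{-1}(i))\,v_T$ and the idempotent action $e_{ij} v_T = \delta_{j, \beta(T^{-1}(i))} v_T$; the result matches the stated scalar in the minimal-order case.

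For general $\mu$ I would induct on the partial order, transporting a lower-order basis vector by the affine shift $\Phi = x_n s_{n-1} \cdots s_1$. The intertwining identity $z_i \Phi = \Phi \phi(z_i)$ from \eqref{phipsit}---whose automorphism $\phi$ sends $z_n$ to $z_1 + 1 - \sum_j (d_{j-1} - d_{j-2}) e_{1j}$---produces exactly the increment $(\mu_i + 1) - (d_{\beta(\cdot)} - d_{\beta(\cdot) - \mu_i - 1})$ that telescopes under iteration to give the claimed eigenvalue. Residual terms generated by the $s_i$-commutation relations \eqref{ghrels} (namely $z_i s_j = s_j z_i$ for $j \neq i, i+1$ and $z_i s_i = s_i z_{i+1} - c_0 \pi_i$) lie strictly lower in the partial order and contribute to the ``lower terms'' in the formula.

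Part (b) is then a triangular diagonalization over the fraction field: the diagonal eigenvalues from Part (a) are pairwise distinct as polynomials in the generic parameters $c_0, d_1, \dots, d_{r-1}$, so a unique $F$-linear combination $f_{\mu, T} = x^\mu v_T^\mu + \textrm{(lower)}$ simultaneously diagonalizes $\ttt$ with the required eigenvalue. The main obstacle is verifying that every cross-term produced by the braid-type relations \eqref{ghrels} and by pushing $\Phi$ through $\ttt$ really lies strictly below $(\mu, T)$ in the partial order---which combines dominance on $\mu_+$ with Bruhat order on $w_\mu$ and ignores $T$. This bookkeeping is the delicate heart of the argument: the order is tailored precisely so that each elementary move either decreases $\mu_+$ in dominance or preserves $\mu_+$ while lowering $w_\mu$ in Bruhat, closing the induction.
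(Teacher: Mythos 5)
The paper itself gives no proof of this statement---it is reproduced verbatim from Theorem 5.1 of \cite{Gri2}---but your sketch is essentially the standard argument for it: establish the degree-zero case from the commutation relations and the Jucys--Murphy eigenvalues, propagate to general $\mu$ via $\Phi$ and the $s_i$ using the trigonometric relations, and then diagonalize triangularly over $F$. The only points requiring care are the one you flag (that all cross-terms drop strictly in the order), together with two you gloss over: when $\mu_i=\mu_{i+1}$ the element $s_i$ does not simply permute the monomial basis but mixes the tableau index, and for part (b) what is actually needed (and what holds, since $\mu_i+1$ can be read off as the constant term of the $z_i$-eigenvalue) is not that all weights are pairwise distinct but that $\mathrm{wt}(\mu,T)\neq\mathrm{wt}(\nu,S)$ whenever $(\nu,S)<(\mu,T)$, i.e.\ whenever $\nu\neq\mu$.
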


We will also index these non-symmetric Jack polynomials $f_{P,Q}$, with $(P,Q) \in \Gamma$ as in \ref{gamma subsection}.

\subsection{Intertwiners}

The intertwiners $\sigma_i$ are defined, for $1 \leq i \leq n-1$, by
\begin{equation}
\sigma_i=s_i+\frac{c_0}{z_i-z_{i+1}} \pi_i.
\end{equation}  Thus $\sigma_i$ is well-defined on any $\ttt$-weight
space on which $\pi_i$ acts by $0$ or on which $z_i$ and $z_{i+1}$
have distinct eigenvalues.

For convenience, we reproduce here Lemma 5.3 of \cite{Gri2}, which
describes how the intertwiners act on the basis $f_{\mu,T}$ of
$\Delta(\lambda^\bullet)$.  For $\mu \in \ZZ^n$ define
\begin{equation}
\phi(\mu_1,\dots,\mu_n)=(\mu_2,\mu_3,\dots,\mu_n,\mu_1+1) \quad 
\text{and} \quad \psi(\mu_1,\dots,\mu_n)=\phi^{-1}(\mu_1,\dots,\mu_n).
\end{equation}

\begin{lemma} \label{action lemma 1}
Let $\mu \in \ZZ_{\geq 0}^n$ and let $T$ be a standard Young tableau on $\lambda$.
\begin{enumerate}
\item[(a)] Suppose $\mu_i \neq \mu_{i+1}$.  If $\mu_i<\mu_{i+1}$ or $\mu_i-\mu_{i+1} \neq \beta(T^{-1}(w_\mu(i)))-\beta(T^{-1}(w_\mu(i+1)))$ mod $r$ then
\begin{equation*}
\sigma_i.f_{\mu,T}=f_{s_i.\mu,T}.
\end{equation*}
\item[(b)]  If $\mu_i>\mu_{i+1}$ and $\mu_i-\mu_{i+1} =\beta(T^{-1}(w_\mu(i)))-\beta(T^{-1}(w_\mu(i+1)))$ mod $r$ then
\begin{equation*}
\sigma_i.f_{\mu,T}=\frac{(\delta-rc_0)(\delta+rc_0)}{\delta^2} f_{s_i \mu,T},
\end{equation*} where
\begin{equation*}
\delta=\kappa(\mu_{i}-\mu_{i+1})-(d_{\beta(T^{-1}(w_\mu(i)))}-d_{\beta(T^{-1}(w_\mu(i+1)))})-c_0 r (\text{ct}(T^{-1}(w_\mu(i)))-\text{ct}(T^{-1}(w_\mu(i+1)))).
\end{equation*}
\item[(c)] Put $j=w_\mu(i)$.  If $\mu_i=\mu_{i+1}$ then
\begin{equation*}
\sigma_i.f_{\mu,T}=\begin{cases}  0 \quad &\hbox{if $s_{j-1}.T$ is not a standard tableau,} \\
f_{\mu,s_{j-1}.T} \quad &\hbox{if $\zeta^{\beta(T(j))}\neq \zeta^{\beta(T(j-1))}$,} \\ 
\left(1-\left( \frac{1}{\text{ct}(T(j-1))-\text{ct}(T(j))} \right)^2 \right)^{1/2} f_{\mu,s_{j-1}.T} \quad &\text{else.} \end{cases}
\end{equation*} 
\item[(d)]  For all $\mu \in \ZZ_{\geq 0}^n$,
\begin{equation*}
\Phi.f_{\mu,T}=f_{\phi.\mu,T}.
\end{equation*}
\item[(e)]  For all $\mu \in \ZZ_{\geq 0}^n$,
\begin{equation*}
\Psi.f_{\mu,T}=\begin{cases}  \left(\kappa \mu_n-(d_{\beta(T^{-1}(w_\mu(n)))}-d_{\beta(T^{-1}(w_\mu(n)))-\mu_n})-r \text{ct}(T^{-1}(w_\mu(n))) c_0 \right) f_{\psi.\mu,T}\quad &\hbox{if $\mu_n>0$,} \\ 0 \quad &\hbox{if $\mu_n=0$.} \end{cases}
\end{equation*}
\end{enumerate}
\end{lemma}

We define $\phi(P,Q)$ as follows: $\phi$ cycles the entries of $P$, relacing $P(b)$ by $P(b)-1$ for $P(b)>1$ and replacing $P(b)$ by $n$ if $P(b)=1$, and adds $1$ to the entry of $Q$ in the box that $P$ labels with $1$. Thus for example if $(P,Q)$ is the pair
$$\begin{ytableau}
3 & 2 \\ 1 & 4
\end{ytableau} \ , \ 
\begin{ytableau}
0 & 0 \\ 0 & 1 
\end{ytableau}
$$
then $\phi(P,Q)$ is the pair
$$\begin{ytableau}
2 & 1 \\ 4 & 3
\end{ytableau} \ , \ 
\begin{ytableau}
0 & 0 \\ 1 & 1
\end{ytableau}
$$ We write $\psi=\phi^{-1}$ for the inverse of $\phi$ (which does not preserve $\Gamma(\lambda)$. 

The lemma, translated into the $(P,Q)$-indexing and using $c(b)=d_{\beta(b)}+r \mathrm{ct}(b) c_0$ is
\begin{lemma} \label{action lemma}
Let $\mu \in \ZZ_{\geq 0}^n$ and let $T$ be a standard Young tableau on $\lambda$.
\begin{enumerate}
\item[(a)]  If $Q(P^{-1}(i)) < Q(P^{-1}(i+1))$ or $Q(P^{-1}(i))-Q(P^{-1}(i+1)) \neq \beta(P^{-1}(i))-\beta(P^{-1}(i+1))$ mod $r$ then
\begin{equation*}
\sigma_i.f_{P,Q}=f_{s_i P,Q}.
\end{equation*}
\item[(b)]  If $Q(P^{-1}(i))> Q(P^{-1}(i+1))$ and $Q(P^{-1}(i))-Q(P^{-1}(i+1)) =\beta(P^{-1}(i))-\beta(P^{-1}(i+1))$ mod $r$ then
\begin{equation*}
\sigma_i.f_{P,Q}=\frac{(\delta-rc_0)(\delta+rc_0)}{\delta^2} f_{s_i P,Q},
\end{equation*} where
\begin{equation*}
\delta=Q(P^{-1}(i))-Q(P^{-1}(i+1))-(c(P^{-1}(i))-c(P^{-1}(i+1))).
\end{equation*}
\item[(c)]   If $Q(P^{-1}(i))i=Q(P^{-1}(i+1))$ then
\begin{equation*}
\sigma_i.f_{P,Q}=\begin{cases}  0 \quad &\hbox{if $(s_iP,Q)$ is not an element of $\Gamma(\lambda)$,} \\
f_{s_iP,Q} \quad &\hbox{if $\beta(P^{-1}(i)) \neq \beta(P^{-1}(i+1))$,} \\ 
\left(1-\left( \frac{1}{\text{ct}(P^{-1}(i+1))-\text{ct}(P^{-1}(i))} \right)^2 \right)^{1/2} f_{s_iP,Q} \quad &\text{else.} \end{cases}
\end{equation*} 
\item[(d)]  For all $(P,Q) \in \Gamma(\lambda)$,
\begin{equation*}
\Phi.f_{P,Q}=f_{\phi(P,Q)}.
\end{equation*}
\item[(e)]  For all $(P,Q) \in \Gamma(\lambda)$,
\begin{equation*}
\Psi.f_{P,Q}=\begin{cases}  \left(Q(P^{-1}(n))-c(P^{-1}(n))+d_{\beta(P^{-1}(n))-Q(P^{-1}(n))} \right) f_{\psi(P,Q)}\quad &\hbox{if $Q(P^{-1}(n))>0$,} \\ 0 \quad &\hbox{if $Q(P^{-1}(n))=0$.} \end{cases}
\end{equation*}
\end{enumerate}
\end{lemma}

\section{Diagonalizability}

\subsection{Weight spaces}

Fix an $r$-partition $\lambda^\bullet$ and let $\Gamma=\ZZ_{\geq 0}
\times \mathrm{SYT}(\lambda^\bullet)$ (via the bijection of \ref{gamma subsection} this is the same $\Gamma$ as defined there).  Given
$c=(c_0,d_0,\dots,d_{r-1}) \in \CC^{r+1}$, $(\mu,T) \in \Gamma$ and
$1 \leq i \leq n$, write $\mathrm{wt}_c(\mu,T)_i$ for the pair
\begin{equation} \label{i weight}
\mathrm{wt}_c(\mu,T)_i=(\mu_i+1-(d_{\beta(T^{-1} w_\mu(i))}-d_{\beta(T^{-1}
w_\mu(i))-\mu_i-1})-r \mathrm{ct}(T^{-1} w_\mu(i)) c_0,
\zeta^{\beta(T^{-1} w_\mu(i))-\mu_i})
\end{equation}  Then define $(\mu,T)$ to be \emph{$c$-folded} (or
simply \emph{folded} when $c$ is fixed or clear from context) if
$\mathrm{wt}_c(\mu,T)_i=\mathrm{wt}_c(\mu,T)_{i+1}$ for some $1 \leq i
\leq n-1$.  Foldings create non-trivial Jordan blocks:
\begin{lemma} \label{foldinglemma}
Suppose that $\mathrm{wt}_c(\mu,T)_i=\mathrm{wt}_c(\mu,T)_{i+1}$ and write
$\alpha=\mathrm{wt}_c(\mu,T)_i$.  Put $f_1=f_{\mu,T}$ and $f_2=s_i
f_1$.  If $z_i f_1=\alpha f_1=z_{i+1} f_1$, then $(z_i-\alpha)
f_2=-rc_0 f_1$ and $(z_{i+1}-\alpha) f_2=rc_0 f_1$.
\end{lemma}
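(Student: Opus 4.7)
The plan is to derive both identities directly from the commutation relation
$$z_i s_i = s_i z_{i+1} - c_0 \pi_i, \qquad \pi_i = \sum_{l=0}^{r-1} \zeta_i^l \zeta_{i+1}^{-l},$$
given in equation (3.9) together with the action of $\pi_i$ (and its conjugate by $s_i$) on the vector $f_1$, which is controlled by the $\zeta$-component of $\mathrm{wt}_c(\mu,T)_i$.

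First I would unpack the weight hypothesis. The equality $\mathrm{wt}_c(\mu,T)_i = \mathrm{wt}_c(\mu,T)_{i+1}$ encodes two pieces of information: the scalars $\alpha = z_i$-eigenvalue $= z_{i+1}$-eigenvalue, and the equality of the $\zeta$-eigenvalues, $\zeta^{\beta(T^{-1}w_\mu(i))-\mu_i} = \zeta^{\beta(T^{-1}w_\mu(i+1))-\mu_{i+1}}$. Since $\zeta_i, \zeta_{i+1}$ act diagonalizably on $\Delta(\lambda^\bullet)$ by Theorem~\ref{Upper triangular}(a), the latter equality forces $\zeta_i f_1 = \zeta_{i+1} f_1$. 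Consequently $\zeta_i^l \zeta_{i+1}^{-l}$ acts as the identity on $f_1$ for every $l$, so $\pi_i f_1 = r f_1$.

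For the first identity, I would compute
\begin{equation*}
z_i f_2 = z_i s_i f_1 = (s_i z_{i+1} - c_0 \pi_i) f_1 = \alpha s_i f_1 - c_0 r f_1 = \alpha f_2 - r c_0 f_1,
\end{equation*}
which rearranges to $(z_i - \alpha) f_2 = -r c_0 f_1$. For the second identity I need the companion relation $z_{i+1} s_i = s_i z_i + c_0\, s_i \pi_i s_i$, which one gets from (3.9) by conjugating both sides by $s_i$ and using $s_i^2 = 1$. A brief check shows $s_i \pi_i s_i = \sum_{l=0}^{r-1} \zeta_{i+1}^l \zeta_i^{-l}$, which also acts as $r$ on $f_1$ for exactly the same reason as $\pi_i$. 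Therefore
\begin{equation*}
z_{i+1} f_2 = (s_i z_i + c_0\, s_i \pi_i s_i) f_1 = \alpha f_2 + r c_0 f_1,
\end{equation*}
and $(z_{i+1} - \alpha) f_2 = r c_0 f_1$, as required.

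There is no real obstacle here; the whole argument is a two-line manipulation once one recognizes that the $\zeta$-component of the weight equality is precisely what is needed to replace the operator $\pi_i$ (and its $s_i$-conjugate) by the scalar $r$ on $f_1$. The only potential pitfall is a sign or factor-of-$r$ slip, so I would be careful to use the exact form of (3.9) and the definition of $\pi_i$ as a sum of $r$ terms rather than an average.
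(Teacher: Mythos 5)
Your proof is correct and is exactly the paper's argument: the paper's entire proof is ``Apply \eqref{ghrels}'', and your computation is precisely that application, with the (correct) observation that the weight equality forces $\pi_i$ and $s_i\pi_i s_i$ to act by the scalar $r$ on $f_1$.
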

\begin{proof}
Apply \eqref{ghrels}.
\end{proof}

As in \cite{Gri2}, for a box $b \in \lambda^\bullet$ and a positive integer $k$ define a set 
\begin{equation}
\Gamma_{b,k}=\{(\mu,T) \in \Gamma \ | \ \mu^-_{T(b)} \geq k \},
\end{equation} and for an ordered pair of distinct boxes $b_1,b_2 \in
\Gamma$ and a positive integer $k \in \ZZ_{>0}$, define the subset $\Gamma_{b_1,b_2,k}$ of $\Gamma$ by
\begin{align}
(\mu,T) \in \Gamma_{b_1,b_2,k} \quad \iff \quad &\hbox{either $
\mu^-_{T(b_1)} - \mu^-_{T(b_2)} > k$} \notag \\ 
& \hbox{or  $\mu^-_{T(b_1)} - \mu^-_{T(b_2)}=k$ and $w_\mu^{-1}(T(b_1)) < w_\mu^{-1}(T(b_2))$}.
\end{align}  Via the bijection with pairs $(P,Q)$ as above, these definitions
become somewhat easier on the eyes:
\begin{equation}
\Gamma_{b,k}=\{(P,Q) \ | \ Q(b) \geq k \}
\end{equation} and
\begin{equation}
\Gamma_{b_1,b_2,k}=\{(P,Q) \ | \ Q(b_1)-Q(b_2) > k, \ \mathrm{or} \
Q(b_1)-Q(b_2)=k \ \mathrm{and} \ P(b_1) < P(b_2)
\end{equation}

For a given parameter $c$,
define the set $\Gamma_c \subseteq \Gamma$ by 
\begin{equation}
\Gamma_c=\bigcap_{b,k} \Gamma_{b,k}^c \cap \bigcap_{b_1,b_2,k} \Gamma_{b_1,b_2,k}^c,
\end{equation} where for a subset $X \subseteq \Gamma$ we write $X^c$
for its complement, the first intersection runs over pairs $b \in
\lambda^\bullet$ and $k \in \ZZ_{>0}$ such that 
$$k=d_{\beta(b)}-d_{\beta(b)-k}+r \mathrm{ct}(b) c_0$$ and the second
intersection runs over triples $b_1,b_2 \in \lambda^\bullet$, $k \in
\ZZ_{>0}$ such that $k=\beta(b_1)-\beta(b_2) \ \mathrm{mod} \ r$ and
$$k=d_{\beta(b_1)}-d_{\beta(b_2)}+r(\mathrm{ct}(b_1)-\mathrm{ct}(b_2)
\pm 1) c_0.$$  The motivation for the definition is that the set $\Gamma_c$ contains exactly those $(\mu,T)$ such that $f_{\mu,T}$ may be constructed from some $v_T \in S^{\lambda^\bullet}$ by applying a sequence of \emph{invertible} intertwining operators; this is a consequence of Lemma 7.4 of \cite{Gri2}.

The definition of $\Gamma_c$ may be rephrased in terms of pairs
$(P,Q)$ as follows: a pair $(P,Q)$ is in $\Gamma_c$ if and only if
the following conditions hold: 
\begin{enumerate}
\item[(a)] whenever $b \in \lambda^\bullet$
and $k \in \ZZ_{>0}$ with $k=d_{\beta(b)}-d_{\beta(b)-k}+r
\mathrm{ct}(b) c_0$ we have $Q(b)<k$, and 
\item[(b)] whenever $b_1,b_2 \in
\lambda^\bullet$ and $k \in \ZZ_{>0}$ with $\beta(b_1)-\beta(b_2)=k$ mod $r$ and
$k=d_{\beta(b_1)}-d_{\beta(b_2)}+r (\mathrm{ct}(b_1)-\mathrm{ct}(b_2)
\pm 1)c_0$ we have $Q(b_1) \leq Q(b_2)+k$, with equality implying
$P(b_1)>P(b_2)$. \end{enumerate}

The \emph{boundary} of $\Gamma_c$ is 
\begin{equation}
\partial \Gamma_c=\{(\mu,T) \in \Gamma-\Gamma_c \ | \ (\psi.\mu,T) \in
\Gamma_c \ \mathrm{or} \ (s_i.\mu,T) \in \Gamma_c \ \hbox{for some $1
\leq i \leq n$} \}.
\end{equation}
\begin{lemma} \label{distinct weights}  Assume $c_0 \neq 0$.
\begin{enumerate}
\item[(a)] Suppose $(\mu,T) \in \Gamma_c$ and $(\nu,S) \in \Gamma$
with $\text{wt}_c(\mu,T)=\text{wt}_c(\nu,S)$.  Then $(\nu,S)=(\mu,T)$.
\item[(b)] For all $(\mu,T) \in \Gamma_c$ and $1 \leq i \leq n-1$, we
have $\mathrm{wt}_c(\mu,T)_i \neq \mathrm{wt}_c(\mu,T)_{i+1}$.  
\item[(c)] The non-symmetric generalized Jack polynomials $f_{\mu,T}$
for $(\mu,T) \in \Gamma_c \cup \partial \Gamma_c$ are all well-defined at $c$.  
\item[(d)] If $(\mu,T) \in \partial \Gamma_c$ is folded then
$L_c(\lambda^\bullet)$ is not $\ttt$-diagonalizable.
\end{enumerate}
\end{lemma}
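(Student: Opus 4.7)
The plan is to handle the four parts sequentially, with (a)--(c) relying on combinatorial analysis of the defining conditions of $\Gamma_c$ and (d) being the main obstruction result.

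For (a), given the weight sequence of $(\mu,T) \in \Gamma_c$, the $\zeta$-eigenvalue at position $i$ recovers $\beta(P^{-1}(i)) - Q(P^{-1}(i)) \pmod{r}$, and together with the $z$-eigenvalue pins down the pair $(P^{-1}(i), Q(P^{-1}(i)))$: condition (a) of $\Gamma_c$ precisely rules out the ambiguities at values of $Q$ where two distinct choices of $Q(P^{-1}(i))$ would produce identical $z$-eigenvalues. Reading off all positions gives $P$ and $Q$. For (b), suppose $\mathrm{wt}_c(\mu,T)_i=\mathrm{wt}_c(\mu,T)_{i+1}$ for $(\mu,T) \in \Gamma_c$, and set $b=P^{-1}(i)$, $b'=P^{-1}(i+1)$. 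Equating coordinates and using the periodicity of $d$ modulo $r$ yields
\begin{equation*}
Q(b)-Q(b')=d_{\beta(b)}-d_{\beta(b')}+r(\mathrm{ct}(b)-\mathrm{ct}(b'))c_0
\end{equation*}
together with $\beta(b)-\beta(b') \equiv Q(b)-Q(b') \pmod{r}$. Analyzing the cases $Q(b)>Q(b')$, $Q(b)<Q(b')$, and $Q(b)=Q(b')$, I would derive a contradiction from condition (b) of $\Gamma_c$ applied to auxiliary boxes (in the first two cases by shifting content by $\pm 1$) and from the longest--$w_\mu$ convention underlying the passage from $(\mu,T)$ to $(P,Q)$ (in the equal case, which forces same-diagonal boxes at consecutive tableau positions to be excluded by the sorting rule).

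Part (c) is essentially built into the definition of $\Gamma_c$: by the motivation stated right after the definition, every $f_{\mu,T}$ with $(\mu,T) \in \Gamma_c$ is obtained from some $v_T \in S^{\lambda^\bullet}$ by a chain of invertible intertwiners, so is well-defined at $c$. For $(\mu,T) \in \partial \Gamma_c$, one applies a single additional intertwiner ($\Psi$, $\Phi$, or $\sigma_j$) to a neighbor in $\Gamma_c$; by part (b), no $\delta=0$ pole of $\sigma_j$ arises (such a pole would correspond to a folding inside $\Gamma_c$), so the intertwiner is well-defined though possibly zero at $c$, and hence so is $f_{\mu,T}$.

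For (d), fix $(\mu,T) \in \partial \Gamma_c$ folded at position $i$ with common weight $\alpha$, and set $f_1=f_{\mu,T}$ and $f_2=s_i f_1$. By (c), $f_1$ is well-defined in $\Delta_c(\lambda^\bullet)$, and Lemma~\ref{foldinglemma} yields $(z_i-\alpha)f_2=-rc_0 f_1$, producing a non-trivial Jordan block whenever $f_1 \neq 0$. The main obstacle I expect is promoting this Jordan block relation to $L_c(\lambda^\bullet)$ by showing that $f_1$ has nonzero image there. My plan is to use that the neighbor $(\nu,T) \in \Gamma_c$ produces a basis element of $L_c(\lambda^\bullet)$, and that the intertwiner linking $f_{\nu,T}$ to $f_{\mu,T}$ does not send $f_{\nu,T}$ into the radical of the contravariant form, since the contravariant form transforms controllably under $\Phi$, $\Psi$, and $\sigma_i$ (these operators are related to their adjoints with respect to $\langle\cdot,\cdot\rangle_c$ by explicit rational formulas that are regular and nonzero at $c$). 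Once $f_1$ is nonzero in $L_c(\lambda^\bullet)$, the Jordan relation above obstructs $\ttt$-diagonalizability.
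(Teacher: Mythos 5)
Your parts (b) and (c) are workable: the direct combinatorial argument for (b) (producing an auxiliary box $b_3<b_2$ with $\mathrm{ct}(b_3)=\mathrm{ct}(b_2)\pm1$ and invoking condition (b) of $\Gamma_c$, plus standardness of $T$ in the $Q(b)=Q(b')$ case) is a legitimate alternative to the paper, which instead deduces (b) from (a) together with Lemma~\ref{foldinglemma}; and (c) agrees with the paper. The gap is in (a): your claim that a single coordinate $\mathrm{wt}_c(\mu,T)_i$ "pins down the pair $(P^{-1}(i),Q(P^{-1}(i)))$", with condition (a) of $\Gamma_c$ ruling out ambiguities, is false. Two pairs $(b,q)$, $(b',q')$ with $\beta(b)-q\equiv\beta(b')-q' \ \mathrm{mod}\ r$ yield the same coordinate exactly when $q-q'=d_{\beta(b)}-d_{\beta(b')}+r(\mathrm{ct}(b)-\mathrm{ct}(b'))c_0$. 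This is the \emph{fold} equation, with no $\pm1$ shift of content, and it is not among the equations defining $\Gamma_c$ (those involve $\mathrm{ct}(b_1)-\mathrm{ct}(b_2)\pm1$, or the special case $\mathrm{ct}=0$); indeed the whole of Section 6 is about the fact that such coincidences do occur adjacent to $\Gamma_c$. For instance with $r=1$, $c_0=1/2$, and boxes of contents $1$ and $-1$, the pairs $(b,q+1)$ and $(b',q)$ have identical coordinates. So the coordinates cannot be decoded independently, and the statement is genuinely global. The paper's proof is global: the chain of invertible intertwiners guaranteed by the definition of $\Gamma_c$ and Lemma 7.4 of \cite{Gri2} identifies the weight space of $\mathrm{wt}_c(\mu,T)$ with a degree-zero weight space, which is one-dimensional because $c_0\neq0$ separates the $v_T$.

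In (d) your mechanism for showing that $f_1=f_{\mu,T}$ survives in $L_c(\lambda^\bullet)$ --- that the intertwiner carrying the $\Gamma_c$-neighbour $f_{\nu,T}$ to $f_{\mu,T}$ interacts with the contravariant form by factors that are ``regular and nonzero at $c$'' --- is exactly backwards. The reason $(\mu,T)$ lies outside $\Gamma_c$ is precisely that the relevant factor vanishes at $c$: for example $\langle\sigma_if_{\nu,T},\sigma_if_{\nu,T}\rangle_c=\frac{(\delta-rc_0)(\delta+rc_0)}{\delta^2}\langle f_{\nu,T},f_{\nu,T}\rangle_c$, and crossing out of $\Gamma_c$ via $\sigma_i$ means $\delta=\pm rc_0$; similarly $\langle\Phi f,\Phi f\rangle_c=\langle f,z_1f\rangle_c$ vanishes when the $\Gamma_{b,k}$-type condition fails. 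Hence $\langle f_1,f_1\rangle_c=0$, and the norm recursion cannot show $f_1\notin\mathrm{rad}\,\langle\cdot,\cdot\rangle_c$. (A null vector need not lie in the radical, but to exploit that you would have to compute the off-diagonal pairing $\langle f_1,s_if_1\rangle_c$, which your sketch does not attempt.) The paper avoids the form entirely: it works with $f_2=s_if_1$, uses $\Phi\Psi f_2=(z_n-\alpha)f_2=rc_0f_1$ to see that $\Psi f_2$ is a nonzero multiple of $f_{\psi\mu,T}$ with $(\psi\mu,T)\in\Gamma_c$, concludes that $f_2$ has nonzero image in $L_c(\lambda^\bullet)$, and hence so does $f_1=s_if_2$, giving the nontrivial Jordan block.
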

\begin{proof}
The definition of $\Gamma_c$ and Lemma 7.4 of \cite{Gri2} together
imply that the intertwiners connecting different $\ttt$-weight spaces indexed
by $\Gamma_c$ are all invertible; it follows that every such weight
space has the same dimension.  Since the weight spaces in degree $0$
(coming from $S^{\lambda^\bullet}$) are all one dimensional (here we use $c_0 \neq 0$), this
proves (a).  Part (b) follows from (a) together with
Lemma~\ref{foldinglemma}.  By part (b) the intertwining operators are
well-defined on all weight spaces coming from $\Gamma_c$; this allows one to
recursively construct all Jack polynomials coming from $\Gamma_c \cup \partial
\Gamma_c$ recursively, proving (c).

Now we prove (d).  Suppose $(\mu,T) \in \partial \Gamma_c$ is folded
and let $f_1=f_{\mu,T}$.  If
$\mathrm{wt}_c(\mu,T)_i=\mathrm{wt}_c(\mu,T)_{i+1}$ then part (b)
implies $z_i f=\alpha f=z_{i+1} f$ for some $\alpha \in \CC$, and by
Lemma~\ref{foldinglemma} $f_2=s_i f_1$ witnesses a non-trivial Jordan
block for $\ttt$: $(z_i-\alpha) f_2=-r c_0 f_1=-(z_{i+1}-\alpha)f_2$.
We will show that the image of $f_2$ in $L_c(\lambda^\bullet)$ is
non-zero.  By (b) we must have either $(s_{i-1} \mu,T) \in \Gamma_c$
or $(s_{i+1}\mu,T) \in \Gamma_c$ or $(\psi \mu,T) \in \Gamma_c$.  

Suppose that $(\psi \mu,T) \in \Gamma_c$.  It follows that the map
$\Psi$ is not an injection on the weight space for $(\mu,T)$, and
hence by the second equation in \eqref{phipsi relations} the
$z_n$-eigenvalue on $(\mu,T)$ is given by
$\alpha=1-(d_{\beta_j}-d_{\beta_j-1})$ where $\zeta_n
f_{\mu,T}=\zeta^{\beta_j} f_{\mu,T}$.  Compute using \eqref{phipsi
relations} and Lemma~\ref{foldinglemma}
$$\Phi \Psi f_2=(z_n-1+\sum_{j=0}^{r-1} (d_j-d_{j-1})
e_{nj})f_2=(z_n-\alpha) f_2=rc_0 f_1.$$ This equation implies that $\Psi f_2=a f_{(\psi \mu,T)}$
for some $a \in \CC^\times$ and since the image of $f_{(\psi \mu,T)}$
in $L_c(\lambda^\bullet)$ is non-zero, so is the image of $f_2$. \end{proof}

Now we can given our first (not completely explicit) description of
the diagonalizable $L_c(\lambda^\bullet)$'s.  When $c_0=0$ the modules $\Delta_c(\lambda^\bullet)$ are all diagonalizable (but with weight spaces of dimension greater than $1$), so the following theorem finishes the classification.  

\begin{theorem} \label{diag char1}
Suppose $c_0 \neq 0$. The module $L_c(\lambda^\bullet)$ is diagonalizable exactly if no
element of $\partial \Gamma_c$ is folded; in this case a basis is
given by $\{f_{\mu,T} \ | \ (\mu,T) \in \Gamma_c \}$.  
\end{theorem}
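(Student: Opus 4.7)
The implication ``$L_c(\lambda^\bullet)$ diagonalizable $\Longrightarrow$ no folded element in $\partial \Ga_c$'' is the contrapositive of Lemma~\ref{distinct weights}(d). For the converse, assume henceforth that no $(\mu,T) \in \partial \Ga_c$ is folded. Set $E := \{f_{\mu,T} \ | \ (\mu,T) \in \Ga_c\}$, $M := \mathrm{span}_\CC E$, and let $N \subseteq \Delta_c(\lambda^\bullet)$ be the maximal proper submodule. My plan is to show that the images of $E$ form a basis of $L_c(\lambda^\bullet) = \Delta_c(\lambda^\bullet)/N$; diagonalizability of $L_c(\lambda^\bullet)$ then follows since each element of $E$ is a $\ttt$-eigenvector.

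For linear independence modulo $N$, note that by Lemma~\ref{distinct weights}(a) the $\ttt$-weights of the vectors in $E$ are pairwise distinct, so the issue reduces to showing $f_{\mu,T}$ is nonzero in $L_c$ for each $(\mu,T) \in \Ga_c$. The degree-zero vectors $f_{0,T}$ span a copy of $S^{\lambda^\bullet}$ that embeds in $L_c(\lambda^\bullet)$ (the radical contains no nonzero degree-zero vector, by $W$-irreducibility of $S^{\lambda^\bullet}$ together with the fact that $\Delta_c(\lambda^\bullet)$ is generated in degree zero). Any $(\mu,T) \in \Ga_c$ is linked to some $(0,T_0)$ by a chain of intertwiners invertible at each stage (Lemma~\ref{distinct weights}(b) together with Lemma~7.4 of \cite{Gri2}); invertibility descends to $L_c$, so each $f_{\mu,T} \in E$ has nonzero image. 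For the surjection $M \twoheadrightarrow L_c(\lambda^\bullet)$, I use the trigonometric presentation (Theorem~\ref{trig pres}): $H_c$ is generated by $\ttt$, $s_1,\ldots,s_{n-1}$, $\Phi$, and $\Psi$. On $E$, $\ttt$ acts diagonally, and by Lemma~\ref{action lemma} the vectors $s_i \cdot f_{\mu,T}$, $\Phi \cdot f_{\mu,T}$, and $\Psi \cdot f_{\mu,T}$ are scalar combinations of $f_{\mu',T'}$ with $(\mu',T')$ a one-step neighbor of $(\mu,T)$, hence in $\Ga_c \cup \partial \Ga_c$. The $\Ga_c$-neighbors contribute to $M$; the key remaining claim is that for non-folded $(\mu',T') \in \partial \Ga_c$, the vector $f_{\mu',T'}$ lies in $N$. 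Granting this, $M+N$ is $H_c$-stable and contains the generating $W$-submodule $S^{\lambda^\bullet}$, so equals $\Delta_c(\lambda^\bullet)$; combined with independence, $E$ descends to a basis of $L_c(\lambda^\bullet)$.

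The heart of the argument is the key claim. I plan to deduce it from the norm formula for the generic Specht-valued Jack polynomials (Theorem~6.1 of \cite{Gri2}): $\la f_{\mu,T}, f_{\mu,T}\ra_F$ factors as a product of elementary terms of the form $(\delta-rc_0)(\delta+rc_0)/\delta^2$, one factor per resonance met along a generic intertwiner path from $f_{0,T_0}$ to $f_{\mu,T}$. The factors that specialize to zero at $c$ are precisely those encoding the defining conditions of $\partial \Ga_c$. When $(\mu,T) \in \partial \Ga_c$ is non-folded, $f_{\mu,T}$ itself remains a well-defined $\ttt$-eigenvector at $c$ by Lemma~\ref{distinct weights}(c) together with the absence of a consecutive-weight collision, so orthogonality of the Jack basis forces $f_{\mu,T}$ into the kernel of the contravariant form, which equals $N$. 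The main obstacle is precisely this step: one must pin down which factor in the generic norm product vanishes for each of the three flavors of boundary crossing ($\Psi$-steps governed by condition~(a) of $\Ga_c$, $s_i$-steps of type~(b) in Lemma~\ref{action lemma}, and $s_i$-tableau-swap steps of type~(c)), and verify case by case that the non-folded hypothesis is exactly what keeps $f_{\mu,T}$ alive at $c$ as a witness of vanishing norm.
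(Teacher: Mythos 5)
Your forward implication and the linear--independence half of the converse are fine and essentially coincide with the paper's (Lemma~\ref{distinct weights} plus chains of invertible intertwiners from the degree-zero part, which lies in $\Ga_c$). For the spanning half your route is genuinely different from the paper's: rather than working inside $\Delta_c(\lambda^\bullet)$ and identifying what falls into the radical $N$, the paper builds an abstract vector space with basis indexed by $\Ga_c$, defines $\ttt$, the $\sigma_i$, $\Phi$, $\Psi$ by the formulas of Lemma~\ref{action lemma} truncated to zero at the boundary, recovers $s_i=\sigma_i-\frac{c_0}{z_i-z_{i+1}}\pi_i$, and verifies the relations of the trigonometric presentation (Theorem~\ref{trig pres}); the no-fold hypothesis enters exactly to make the denominators in the braid-relation and $\Psi s_{n-1}\Phi=\Phi s_1\Psi$ computations well defined. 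The resulting $H_c$-module is a quotient of $\Delta_c(\lambda^\bullet)$ and hence surjects onto $L_c(\lambda^\bullet)$, giving both diagonalizability and the basis without ever needing to locate $N$.

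The gap in your version is the key claim that $f_{\mu',T'}\in N$ for every non-folded $(\mu',T')\in\partial\Ga_c$, and the norm formula does not deliver it. Vanishing of $\la f_{\mu',T'},f_{\mu',T'}\ra_c$ together with orthogonality to the other well-defined $f_{\nu,S}$ only shows that $f_{\mu',T'}$ is orthogonal to the span of $\{f_{\nu,S}:(\nu,S)\in\Ga_c\cup\partial\Ga_c\}$, and at the special parameter $c$ this span need not be all of $\Delta_c(\lambda^\bullet)$: the missing directions are precisely the generalized (non-semisimple) part of the $\ttt$-action. Distinct generalized weight spaces are indeed orthogonal since $z_i^*=z_i$, but the generalized weight space containing $f_{\mu',T'}$ has dimension equal to the number of $(\nu,S)\in\Ga$ with that $\ttt$-weight, which can exceed one, and a norm-zero honest eigenvector inside a nontrivial Jordan block need not lie in the radical (in a block with $(z-\ga)w=u$ the contravariant form can have $\la u,u\ra=0$ but $\la u,w\ra\neq 0$). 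Knowing that $f_{\mu',T'}$ dies in $L_c(\lambda^\bullet)$ is equivalent to knowing that its weight does not occur in $L_c(\lambda^\bullet)$ at all, which is essentially the conclusion of the theorem, so as written the argument is circular at this point; some independent input, such as the paper's verification of the presentation relations, is required. A secondary point: your assertion that $\Psi f_{\mu,T}$ and the tableau-swap images in case (c) of Lemma~\ref{action lemma} land in $\Ga_c\cup\partial\Ga_c$ is not covered by the literal definition of $\partial\Ga_c$ (which only involves $(s_i\mu,T)$ and $(\psi\mu,T)$ mapping into $\Ga_c$); one must check separately that $\Ga_c$ is stable under $\psi$ and under the admissible tableau swaps.
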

\begin{proof}
Given Lemma~\ref{distinct weights}, it remains to show that if no
element of $\partial \Gamma_c$ is folded then $L_c(\lambda^\bullet)$
is diagonalizable with the given basis.  Let $V$ be the abstract
$\CC$-vector space with basis given by $\{f_{\mu,T} \ | \ (\mu,T) \in
\Gamma_c \}$, and define actions of $\ttt$,
$\sigma_1,\dots,\sigma_{n-1}$, $\Phi$, and $\Psi$ on $V$ as follows:
the $\ttt$-action has $f_{\mu,T}$ as eigenfunctions with eigenvalues
given by Theorem~\ref{Upper triangular}, and the action of $\sigma_i$,
$\Phi$, and $\Psi$ is given by the formulas in Lemma~\ref{action
lemma} with the following exceptions: if $(s_i \mu,T) \notin \Gamma_c$
then we put $\sigma_i f_{\mu,T}=0$ and if $(\phi \mu,T) \notin
\Gamma_c$ then we put $\Phi f_{\mu,T}=0$.  Without using the
hypothesis that no element of $\partial \Gamma_c$ is folded, it
follows from these definitions that the $\sigma_i$'s satisfy the braid
relations, that 
\begin{equation} \label{e1}
\sigma_i^2=\frac{(z_i-z_{i+1}-c_0 \pi_i)(z_i-z_{i+1}+c_0
\pi_i)}{(z_i-z_{i+1})^2},
\end{equation} that
\begin{equation} \label{e2}
f \sigma_i=\sigma_i s_i(f) \quad
\hbox{for $1 \leq i \leq n-1$ and $f \in \ttt$,}
\end{equation} that
\begin{equation}
\Phi \sigma_i= \sigma_{i-1} \Phi \quad \hbox{for $2 \leq i \leq n-1$
and} \quad \Phi^2 \sigma_1=\sigma_{n-1} \Phi^2,
\end{equation} that
\begin{equation}
\Psi \sigma_i=\sigma_{i+1} \Psi \quad \hbox{for $1 \leq i \leq n-2$
and} \quad \Psi^2 s_{n-1}=s_1 \Psi^2,
\end{equation} that
\begin{equation}
f \Phi=\Phi \phi(f) \quad \text{and} \quad f \Psi=\Psi \phi^{-1}(f)
\quad \hbox{for $f \in \ttt$}
\end{equation} and that
\begin{equation}
\Psi \Phi=z_1, \quad \Phi
\Psi=z_n-\kappa+\sum_{j=0}^{r-1} (d_j-d_{j-1}) e_{nj}, \quad
\text{and} \quad \Psi \sigma_{n-1} \Phi=\Phi \sigma_1 \Psi.
\end{equation}  Indeed, these formulas hold by Lemma~\ref{action
lemma} when applied to those $f_{\mu,T}$ for which the result stays in
$\Gamma_c$ at each stage; some care must be taken near the boundary, as we indicate next.  

We will sketch a check of the relation 
$\Psi \sigma_{n-1} \Phi=\Phi \sigma_1 \Psi$ here; the others involve 
similar reasoning.  Suppose first that $\Phi f_{P,Q}=0$, that is, 
$\phi(P,Q) \notin \Gamma_c$.  By definition of $\Gamma_c$ and $\phi$, 
there is a box $b$ with $Q(b)=k-1$, $P(b)=1$ and such that if $Q'(b) \geq k$ for some
$(P',Q') \in \Gamma$ then $(P',Q') \notin \Gamma_c$.  Now observe that
$\phi s_1 \psi (P,Q)=(P',Q')$ with $Q'(b)=k$ and hence $(P',Q') \notin \Gamma_c$, so 
both operators act by zero on $f_{P,Q}$.  

If $\phi(P,Q) \in \Gamma_c$ but 
$s_{n-1} \phi (P,Q) \notin \Gamma_c$, then writing 
$(P',Q')=\phi(P,Q)$, setting $b_1=P'^{-1}(n)$ and $b_2=P'^{-1}(n-1)$ we have $Q'(b_1)=Q' (b_2)+k$ for some 
positive integer $k$, $k=\beta(b_1)- \beta(b_2) \ \mathrm{mod} \ r$, such that if $(P_0,Q_0) \in \Gamma$ with $Q_0(b_1)>Q_0(b_2)+k$, or $Q_0(b_1)=Q_0(b_2)+k$ and $P_0(b_1)<P_0(b_2)$, then $(P_0,Q_0) \notin \Gamma_c$.  It follows from this that $s_1 \psi(P,Q) \notin \Gamma_c$, so again both operators act by zero on $f_{P,Q}$.  The other cases are handled in a similar fashion.

Now define the action of $s_1,\dots,s_{n-1}$ on $V$ by the formula
\begin{equation} \label{si def}
s_i=\sigma_i-\frac{c_0}{z_i-z_{i+1}} \pi_i.
\end{equation}  This makes sense by part (b) of Lemma~\ref{distinct
weights}.  Using Theorem~\ref{trig pres} we must check that the
$s_i$'s and $\ttt$ satisfy the graded Hecke relations.  The relations
\eqref{ghrels} follow from the definition \eqref{si def} and the
relations \eqref{e2}.  The fact that $s_i^2=1$ follows from \eqref{si
def} and \eqref{e1}.  The fact that the braid relations are satisfied
will be the first place the hypothesis that $\partial
\Gamma_c$ contains no folds is used.  Compute:
\begin{align*}
s_i s_{i+1} s_i&=\left( \sigma_i-\frac{c_0}{z_i-z_{i+1}} \pi_i
\right)\left( \sigma_{i+1}-\frac{c_0}{z_{i+1}-z_{i+2}} \pi_{i+1}
\right)\left( \sigma_i-\frac{c_0}{z_i-z_{i+1}} \pi_i \right) \\
&=\sigma_i \sigma_{i+1} \sigma_i-c_0 \sigma_i \sigma_{i+1}
\frac{1}{z_i-z_{i+1}} \pi_i -c_0 \sigma_i^2 \frac{1}{z_i-z_{i+2}}\pi_{i,i+2}-c_0
\sigma_{i+1} \sigma_i \frac{1}{z_{i+1}-z_{i+2}} \pi_{i+1} \\
&+c_0^2 \sigma_i \frac{1}{(z_{i+1}-z_{i+2})(z_i-z_{i+1})} \pi_i
\pi_{i+1} + c_0^2 \sigma_{i+1} \frac{1}{(z_i-z_{i+2})(z_i-z_{i+1})}
\pi_{i,i+2} \pi_{i+1} \\ &+c_0^2 \sigma_i
\frac{1}{(z_{i+1}-z_i)(z_i-z_{i+2})} \pi_i \pi_{i,i+2}
-c_0^3 \frac{1}{(z_i-z_{i+1})^2(z_{i+1}-z_{i+2})} \pi_i \pi_{i+1} \pi_i.
\end{align*}  This preceding calculation was formal, but the
hypothesis that no element of $\partial \Gamma_c$ is folded is exactly
what is needed to ensure that the right-hand side of the above
equation is well-defined when applied to $f_{\mu,T}$ for all $(\mu,T)
\in \Gamma_c$.  Routine arithmetic verifies that it is the same as the
corresponding expression for $s_{i+1} s_i s_{i+1}$.  This verifies
that we have the structure of an $\HH_{\mathrm{gr}}$-module on $V$.  

Verification of the relations \eqref{phi relations}, \eqref{psi
relations}, and the last equation in \eqref{phipsi relations} is
exactly analogous, again using the hypothesis that no element of
$\partial \Gamma_c$ is folded: for instance, one computes
\begin{align}
\Psi s_{n-1} \Phi&=\Psi \left( \sigma_{n-1} -\frac{c_0}{z_{n-1}-z_n} \pi_{n-1,n} \right) \Phi=\Phi \sigma_1 \Psi-\Psi \Phi \phi\left( \frac{c_0}{z_{n-1}-z_n} \pi_{n-1,n}\right) \\
&=\Phi s_1 \Psi-\Phi \Psi \phi^{-1} \left( \frac{c_0}{z_{1}-z_2} \pi_{1,2} \right)-\Psi \Phi \phi\left( \frac{c_0}{z_{n-1}-z_n} \pi_{n-1,n}\right). \notag
\end{align}  The hypothesis that there are no folded elements of $\partial \Gamma_c$ implies that this last expression makes sense when applied to any $f_{P,Q}$ for $(P,Q) \in \Gamma_c$, and a straightforward calculation shows that it is equivalent to the last relation in \eqref{phipsi relations}.

We have therefore defined an $H_c$-module struture on $V$. It follows from the construction and Lemma \ref{distinct weights} that the $\ttt$-weight spaces on $V$ are all one-dimensional, and from Lemma 7.4 of \cite{Gri2} that any non-zero weight vector generates $V$ as an $H_c$-module.Thus $V$ is irreducible. It belong to category $\OO_c$ since by construction $\Psi$ (and hence each $y_i$) is locally nilpotent on it. The construction implies that its degree $0$ piece is isomorphic to $S^{\lambda^\bullet}$, and hence it is isomorphic to $L_c(\lambda^\bullet)$, which is therefore diagonalizable with basis $f_{P,Q}$ for $(P,Q) \in \Gamma_c$. 
\end{proof}  

\section{Combinatorics of folds}

\subsection{Near folds} We first obtain some limitations on the types of folds that can occur in
$\partial \Gamma_c$.  First, we switch from now on to the $(P,Q)$
notation for elements of $\Gamma$, and we define a \emph{near fold} to
be an element $(P,Q) \in \Gamma_c$ such that $\phi(P,Q)$ or $s_i
(P,Q)$ is folded for some $1 \leq i \leq n-1$ (by Lemma \ref{distinct weights} this fold is then in the boundary $\partial \Gamma_c$).  Here we define $s_i(P,Q)=(s_iP,Q)$, and $\phi(P,Q)=(P',Q')$ with
\begin{equation}
P'(b)=\begin{cases} P(b)-1 \quad &\hbox{if $P(b)>1$, and} \\ n \quad &\hbox{if $P(b)=1$,} \end{cases}
\end{equation} and
\begin{equation}
Q'(b)=\begin{cases} Q(b) \quad &\hbox{if $P(b)>1$, and} \\ Q(b)+1 \quad &\hbox{if $P(b)=1$.} \end{cases}
\end{equation}  These definitions are compatible with the corresponding ones for the $(\mu,T)$ notation via the bijection of \ref{gamma subsection}.

The \emph{upper rim} of a partition $\lambda$ is the set of boxes $b \in \lambda$ such that there is no box immediately above $b$.  The \emph{upper rim} of an $r$-partition $\lambda^\bullet$ is the union of the upper rims of its components $\lambda^l$.  The \emph{left rim} of a partition (resp. multipartition) is defined analogously as the set of boxes with no box immediately to the left.

\begin{lemma} \label{fold char}
$(P,Q) \in \Gamma_c$ is a near fold if and only if there is a positive
integer $k$ and boxes $b_1,b_2 \in \lambda^\bullet$ such that $b_1$ is
a removable box, $b_2$ is on the upper rim or left rim of
$\lambda^\bullet$, $k=\beta(b_1)-\beta(b_2) \ \mathrm{mod} \ r$, 
$$k=d_{\beta(b_1)}-d_{\beta(b_2)}+r(\mathrm{ct}(b_1)-\mathrm{ct}(b_2))c_0,$$
and either
\begin{enumerate}
\item[(a)] $\mathrm{ct}(b_2)=0$ (so $b_2$ is the upper left hand
corner of $\lambda^{\beta(b_2)}$), $Q(b_1)=k-1$, $Q(b_2)=0$,
$P(b_1)=1$, and $P(b_2)=n$, or
\item[(b)] $\mathrm{ct}(b_2) \neq 0$ and there are $a \in \ZZ_{\geq
0}$ and a box $b_3<b_2$ adjacent to $b_2$ with
$\mathrm{ct}(b_3)=\mathrm{ct}(b_2) \pm 1$
with $Q(b_1)=a+k$, $Q(b_2)=a=Q(b_3)$, $P(b_1)=i+1$, $P(b_2)=i-1$, and
$P(b_3)=i$ for some $2 \leq i \leq n-1$.
\end{enumerate}
\end{lemma}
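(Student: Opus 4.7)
The proof splits according to whether the near fold comes from applying $\phi$ or some $s_j$; these two possibilities correspond respectively to cases (a) and (b) of the lemma. The reverse implication is a direct verification that the weights at the prescribed positions of $\phi(P,Q)$ or $s_j(P,Q)$ coincide under the given content relation, so I focus on the forward implication.

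Consider first the $\phi$ case. Since $(P,Q) \in \Gamma_c$ is unfolded by Lemma~\ref{distinct weights}(b), and since the only consecutive-position pair appearing in $\phi(P,Q)$ but not already in $(P,Q)$ is $(n-1,n)$, the new fold must occur there, identifying $b_1 = P^{-1}(1)$ and $b_2 = P^{-1}(n)$. Matching the two components of the weight \eqref{i weight} yields $k \equiv \beta(b_1) - \beta(b_2) \bmod r$ and $k = d_{\beta(b_1)} - d_{\beta(b_2)} + r(\mathrm{ct}(b_1) - \mathrm{ct}(b_2))c_0$, where $k = Q(b_1) + 1 - Q(b_2)$. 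To force $b_1$ removable and $\mathrm{ct}(b_2)=0$, I would argue by contradiction: if $b_1$ had an immediate right or below neighbor $b_1'$, the defining exclusion of $\Gamma_c$ applied to the pair $(b_1',b_2)$, with the sign choice in $\pm 1$ matching the horizontal or vertical shift of $\mathrm{ct}(b_1')$, combined with the $\Gamma$-order $b_1 < b_1'$ would force $P(b_1') < P(b_1) = 1$, impossible. The same maneuver applied to an above or left neighbor of $b_2$ forces $\mathrm{ct}(b_2)=0$. The values $Q(b_1) = k-1$, $Q(b_2) = 0$ then follow from the $\Gamma_c$-constraint (a) for $b_1$, which applies because with $\mathrm{ct}(b_2)=0$ the content relation reduces to its type (a) form, giving $Q(b_1) < k$; together with $Q(b_1) \geq k-1$ this pins down the values.

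Consider now the $s_j$ case. The same unfoldedness of $(P,Q)$ forces the new fold in $s_j(P,Q)$ to arise between positions whose preimages in $(P,Q)$ are two apart, namely $\{P^{-1}(i-1), P^{-1}(i+1)\}$ for some $i$ with $j \in \{i-1,i\}$; set $b_2 = P^{-1}(i-1)$, $b_3 = P^{-1}(i)$, $b_1 = P^{-1}(i+1)$. The weight match yields the same content relation with $k = Q(b_1) - Q(b_2)$. The structural conclusions come from three observations obtained by applying $\Gamma_c$-exclusions to neighbors: (i) if $b_2$ has a neighbor $b_2'$ above or to the left, the exclusion for $(b_1, b_2')$ and the $\Gamma$-order $b_2' < b_2$ force $b_2' = b_3$ with $Q(b_3) = Q(b_2)$; (ii) symmetrically, a right or below neighbor $b_1'$ of $b_1$ would have to coincide with $b_3$ and satisfy $Q(b_3) = Q(b_1)$; (iii) both cannot hold, since then $Q(b_1) = Q(b_2)$ contradicts $k > 0$. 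Hence $b_1$ is removable, and $b_2$ has at most one of (above, left) occupied with that box equal to $b_3$, yielding the claimed rim structure and adjacency. Finally $\mathrm{ct}(b_2) \neq 0$ follows because otherwise the $\Gamma_c$-constraint (a) for $b_1$ would give $Q(b_1) < k$, contradicting $Q(b_1) = Q(b_2) + k \geq k$.

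The main delicacy is the careful matching of horizontal versus vertical adjacency to the two choices of $\pm 1$ in the $\Gamma_c$-relation, and verifying that the degenerate configuration in which both $b_1$ and $b_2$ would share $b_3$ as an adjacent neighbor is ruled out purely by step (iii); once these bookkeeping points are handled, the remainder is routine computation with the weight formula.
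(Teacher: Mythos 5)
Your strategy is the paper's: verify the forward direction directly, and for the converse locate the new fold at the unique new adjacent pair of positions in $\phi(P,Q)$ or $s_j(P,Q)$, then apply the defining inequalities of $\Gamma$ and of $\Gamma_c$ to boxes adjacent to $b_1$ and $b_2$ to force configurations (a) and (b); your steps (i)--(iii) are a repackaging of the paper's observation that two smaller adjacent boxes of $b_2$ would both have to occupy the single $P$-slot between $P(b_2)$ and $P(b_1)$.

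The genuine gap is that you never prove $k>0$, and the entire neighbor analysis presupposes it: the exclusions defining $\Gamma_c$ are indexed by \emph{positive} integers, so for $k\le 0$ the constraints you invoke for the pairs $(b_1',b_2)$ and $(b_1,b_2')$ simply do not exist and the argument is vacuous. In the $\phi$ case $k=Q(P^{-1}(1))+1-Q(P^{-1}(n))$ can a priori be negative or zero, and excluding this is a separate argument, not a formality: the paper devotes a full paragraph to it, splitting on whether $P^{-1}(1)$ is the upper-left corner of its component and deriving a contradiction with membership in $\Gamma_c$ in each sub-case. Likewise, in the $s_j$ case you fix the orientation $b_1=P^{-1}(i+1)$, $b_2=P^{-1}(i-1)$ by fiat and then use $k=Q(b_1)-Q(b_2)>0$ in step (iii); the paper instead assumes only $|P(b_1)-P(b_2)|=2$ and $k\ge 0$, rules out $k=0$ (equal $\beta$, content and $Q$-value would force at least two boxes with $P$-value strictly between $P(b_1)$ and $P(b_2)$), and \emph{derives} $P(b_1)=P(b_2)+2$ from the forced position of $b_3$. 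A smaller omission: each of your corner/removability checks has two sub-cases (e.g.\ $Q(b_1')=Q(b_1)$, excluded by the $\Gamma$-condition forcing $P(b_1')<P(b_1)=1$, versus $Q(b_1')=Q(b_2)+k$, excluded by the equality clause of the $\Gamma_c$-condition forcing $P(b_1')>P(b_2)=n$), and you record only the first.
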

\begin{proof}
First, if (a) holds then $\phi(P,Q)$ is folded, and if (b) holds then $s_i (P,Q)$ is folded.  This follows 
from the formula given in \ref{gamma subsection} for the $\ttt$-eigenvalue of $f_{P,Q}$,
 together with the formulas for $\phi(P,Q)$ and $s_i(P,Q)$ given above.

For the converse, assume first that $s_i(P,Q)=(s_iP,Q)$ is folded for some $(P,Q) \in
\Gamma_c$ and $1 \leq i \leq n-1$.  We will show that in this case we
are in situation (b) of the lemma.  It follows from our assumption that either 
$$Q(P^{-1}(i))-Q(P^{-1}(i+2))=d_{\beta(P^{-1}(i))}-d_{\beta(P^{-1}(i+2))}+r(\mathrm{ct}(P^{-1}(i))-\mathrm{ct}(P^{-1}(i+2)))c_0$$
with $Q(P^{-1}(i))-Q(P^{-1}(i+2))=\beta(P^{-1}(i))-\beta(P^{-1}(i+2))
\ \mathrm{mod} \ r$, or that the analogous equations, replacing $i$
and $i+2$ by $i-1$ and $i+1$, hold.   In any case there are boxes $b_1,b_2 \in \lambda^\bullet$ and a non-negative integer $k$ with
\begin{equation} \label{fold eqn}
k=d_{\beta(b_1)}-d_{\beta(b_2)}+r(\mathrm{ct}(b_1)-\mathrm{ct}(b_2))c_0,
\end{equation}
$$k=\beta(b_1)-\beta(b_2) \ \mathrm{mod} \ r, \quad Q(b_1)-Q(b_2)=k, \quad \text{and} \quad |P(b_1)-P(b_2)|=2.$$  First observe that $k > 0$
since otherwise $Q(b_1)=Q(b_2)$, $\beta(b_1)=\beta(b_2)$, and
$\mathrm{ct}(b_1)=\mathrm{ct}(b_2)$ contradicts $|P(b_1)-P(b_2)|=2$.  Since $(P,Q) \in \Gamma_c$, \eqref{fold eqn} implies $\mathrm{ct}(b_2) \neq 0$.  If $b_2$ is not on the upper or left rim of $\lambda^\bullet$, there are two boxes $b_3,b_4 < b_2$ with $\mathrm{ct}(b_3)=\mathrm{ct}(b_2) \pm 1$ and $\mathrm{ct}(b_4)=\mathrm{ct}(b_2) \pm 1$.  Now \eqref{fold eqn} together with $(P,Q) \in \Gamma_c$ implies that $Q(b_3)=Q(b_2)=Q(b_4)$ and that $P(b_1)>P(b_3),P(b_4)>P(b_2)$, contradicting $|P(b_1)-P(b_2)|=2$.  On the other hand, since $\mathrm{ct}(b_2) \neq 0$ there is always at least one box $b_3$ as above, so we have $P(b_1)=P(b_3)+1=P(b_2)+2$.  

If $b_1$ is not a removable box, then there is a box $b>b_1$ such that $\mathrm{ct}(b)=\mathrm{ct}(b_1) \pm 1$, and \eqref{fold eqn} once more implies $Q(b)=Q(b_1)$ and hence $P(b_1)>P(b)>P(b_2)$, which contradicts $P(b_1)=P(b_3)+1=P(b_2)+2$.  

Now assume $\phi(P,Q)$ is folded for some $(P,Q) \in \Gamma_c$.  Then
$$Q(P^{-1}(1))+1-Q(P^{-1}(n))=d_{\beta(P^{-1}(1))}-d_{\beta(P^{-1}(n))}+r(\mathrm{ct}(P^{-1}(1))-\mathrm{ct}(P^{-1}(n))c_0$$ and
$$Q(P^{-1}(1))+1-Q(P^{-1}(n))=\beta(P^{-1}(1))-\beta(P^{-1}(n)) \
\mathrm{mod} \ r.$$  

Assume first that $Q(P^{-1}(1))+1-Q(P^{-1}(n))<0$
and let $k=Q(P^{-1}(n))-Q(P^{-1}(1))-1$, $b_1=P^{-1}(n)$ and
$b_2=P^{-1}(1)$.  If $b_2$ is the upper left hand corner of
$\lambda^{\beta(b_2)}$ then $\mathrm{ct}(b_2)=0$ and the equations
$$k=d_{\beta(b_1)}-d_{\beta(b_2)}+r \mathrm{ct}(b_1) c_0 \quad
\text{and} \quad k=\beta(b_1)-\beta(b_2) \ \mathrm{mod} \ r$$ together
with $(P,Q) \in \Gamma_c$ force $Q(b_1) < k=Q(b_1)-Q(b_2)-1$,
contradiction.  Thus $b_2$ is not the upper left-hand corner of
$\lambda^{\beta(b_2)}$ and hence there is a box $b_3<b_2$ with
$\mathrm{ct}(b_3)=\mathrm{ct}(b_2) \pm 1$.  Now
$$k=d_{\beta(b_1)}-d_{\beta(b_3)}+r(\mathrm{ct}(b_1)-\mathrm{ct}(b_3)
\pm 1) c_0 \quad \text{and} \quad k=\beta(b_1)-\beta(b_3) \
\mathrm{mod} \ r$$ together with $(P,Q) \in \Gamma_c$ imply that
$Q(b_1)-Q(b_3) \leq k=Q(b_1)-Q(b_2)-1 \leq Q(b_1)-Q(b_3)-1$,
contradiction.  Therefore $Q(P^{-1}(n))-Q(P^{-1}(1))-1 \geq 0$.

Let $k=Q(P^{-1}(1))+1-Q(P^{-1}(n))$, $b_1=P^{-1}(n)$ and
$b_2=P^{-1}(1)$.  Then $k \geq 0$. 

If $k=0$ then
$\beta(b_1)=\beta(b_2)$, $Q(b_1)=Q(b_2)+1$,
$\mathrm{ct}(b_1)=\mathrm{ct}(b_2)$ and $Q(b_1)=Q(b_2)-1$ implying $b_1<b_2$.  But since
$P(b_1)=1$ there can be no box $b>b_1$ with $Q(b)=Q(b_1)$ and likewise
since $P(b_2)=n$ there can be no box $b<b_2$ with $Q(b)=Q(b_2)$,
contradiction.

Therefore $k>0$, and the equations
$$k=d_{\beta(b_1)}-d_{\beta(b_2)}+r(\mathrm{ct}(b_1)-\mathrm{ct}(b_2))c_0
\quad \text{and} \quad k=\beta(b_1)-\beta(b_2) \ \mathrm{mod} \ r$$
together with $(P,Q) \in \Gamma_c$, $P(b_1)=1$ and $P(b_2)=n$ imply
that there is no box $b$ with $b>b_1$ or $b<b_2$, and hence also that
$Q(b_1)=k-1$.  This proves that we are in case (a) of the lemma.
\end{proof}

\subsection{Proof of Theorem \ref{diag thm}} Now thanks to Theorem \ref{diag char1}, our first main Theorem \ref{diag thm} is a consequence of the following:

\begin{theorem}
Assume $c_0 \neq 0$.  The module $L_c(\lambda^\bullet)$ is diagonalizable if and only if for
every removable box $b \in \lambda^\bullet$, either $k_c(b)=\infty$ or $l_c(b)<k_c(b)$.
\end{theorem}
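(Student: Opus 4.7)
The plan is to combine Theorem \ref{diag char1}, which recasts diagonalizability of $L_c(\lambda^\bullet)$ as the absence of folded elements in $\partial \Gamma_c$, with the combinatorial description of near folds in Lemma \ref{fold char}. The statement then reduces to showing: \emph{a near fold exists if and only if there is a removable box $b \in \lambda^\bullet$ with $k_c(b) < \infty$ and $l_c(b) \geq k_c(b)$.} We handle the two directions separately.

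For the ``if'' direction, given such $b_1$ with $k = k_c(b_1)$, we pick a witness $b_2 \in \lambda^{\beta(b_1)-k}$ realizing $c(b_1) - c(b_2) = k$, chosen uppermost and then leftmost among boxes of content $\mathrm{ct}(b_2)$ in its component, so that $b_2$ lies on the upper or left rim of $\lambda^{\beta(b_2)}$. When $\mathrm{ct}(b_2) = 0$ we instantiate the configuration of Lemma \ref{fold char}~(a), placing $P(b_1) = 1$, $P(b_2) = n$, $Q(b_1) = k-1$, $Q(b_2) = 0$; otherwise we instantiate Lemma \ref{fold char}~(b) with $b_3$ the neighbor of $b_2$ inside $\lambda^{\beta(b_2)}$. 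In either case we fill the remaining $Q$ and $P$ values minimally compatibly with the partial order on boxes. The verification that $(P, Q) \in \Gamma_c$ has two parts: condition~(a) of the definition of $\Gamma_c$ at $b_1$ is satisfied by the minimality of $k_c(b_1) = k$, and condition~(b) could only fail at a pair $(b_1, b^{**})$ whose triggering equation corresponds, after shifting content by $\pm 1$, to an outside addable box for $\lambda^{\beta(b^{**})}$ witnessing $l_c(b_1) < k$, which is excluded by hypothesis.

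For the ``only if'' direction, start from a near fold with data $(b_1, b_2, k)$ as in Lemma \ref{fold char}. Then $k_c(b_1) \leq k < \infty$. Suppose for contradiction that $l_c(b_1) < k_c(b_1)$, and let $b^*$ be the outside addable box of $\lambda^{\beta(b_1)-l_c(b_1)}$ witnessing $l_c(b_1) = c(b_1) - c(b^*)$. When $\lambda^{\beta(b_1)-l_c(b_1)}$ is empty, $\mathrm{ct}(b^*) = 0$ and the equation for $l_c(b_1)$ is precisely the equation in condition~(a) of $\Gamma_c$ at $b_1$ for $k' = l_c(b_1)$, directly contradicting the value $Q(b_1) \geq k_c(b_1) - 1 \geq l_c(b_1)$ forced by the near fold. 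When $\lambda^{\beta(b_1)-l_c(b_1)}$ is non-empty, let $b^{**}$ be the box of this component adjacent to $b^*$; then $\mathrm{ct}(b^{**}) = \mathrm{ct}(b^*) \pm 1$, and condition~(b) of $\Gamma_c$ for the pair $(b_1, b^{**})$ at level $l_c(b_1)$ produces an inequality $Q(b_1) \leq Q(b^{**}) + l_c(b_1)$ that, propagated through the monotonicity of $Q$ on $\lambda^{\beta(b^{**})}$ and the specific $(P, Q)$ of the near fold, yields the same contradiction.

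The main obstacle will be the pairwise verification of condition~(b) of $\Gamma_c$ in both directions. The technical crux is that the $\pm 1$ content shift in condition~(b) exactly matches the content shift between a rim box and its outside-addable neighbor, so the single invariant $l_c(b_1)$ governs the full family of condition~(b) constraints involving $b_1$. Case analysis distinguishing the corner case $\mathrm{ct}(b_2) = 0$ from the generic rim case, and the upper rim from the left rim within the generic case, then handles the remaining bookkeeping.
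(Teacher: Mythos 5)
Your overall architecture coincides with the paper's: reduce via Theorem~\ref{diag char1} to the non-existence of near folds in $\Gamma_c$, invoke Lemma~\ref{fold char} to describe them combinatorially, build an explicit near fold when some removable box $b$ has $k_c(b)<\infty$ and $l_c(b)\geq k_c(b)$, and derive a contradiction from a hypothetical near fold otherwise. (The paper routes the argument through an auxiliary statistic $l_c'$ that also sees inside boxes, and checks at the end that this is equivalent to using $l_c$; that difference is cosmetic.) The genuine gap is in your ``only if'' direction, in the non-empty case. Writing $l=l_c(b_1)<k$, the box $b^{**}$ lies in $\lambda^{\beta(b_1)-l}$ while $b_2$ lies in $\lambda^{\beta(b_1)-k}$, so they are in different components in general (and even in the same component they need not be comparable); hence ``monotonicity of $Q$ on $\lambda^{\beta(b^{**})}$'' cannot connect $Q(b^{**})$ to $Q(b_2)$, and the single inequality $Q(b_1)\leq Q(b^{**})+l$ does not by itself contradict $Q(b_1)=Q(b_2)+k$, since $Q(b_2)$ may be positive and $Q(b^{**})$ is a priori unconstrained. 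The missing step is to subtract the two charged-content equations to obtain $0<k-l=d_{\beta(b^{**})}-d_{\beta(b_2)}+r(\mathrm{ct}(b^{**})-\mathrm{ct}(b_2)\pm1)c_0$ with $k-l\equiv\beta(b^{**})-\beta(b_2)\bmod r$, which is a second condition-(b) constraint, this time for the pair $(b^{**},b_2)$. Chaining the two gives $Q(b_1)\leq Q(b^{**})+l\leq Q(b_2)+k$; the near fold forces equality throughout, hence $P(b_1)>P(b^{**})>P(b_2)$, and since $P(b_1)=P(b_2)+2$ this pins $b^{**}=b_3$, whereupon $Q(b_1)\leq Q(b_3)+l=Q(b_2)+l<Q(b_2)+k$ is the contradiction. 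Without this chaining and the $b^{**}=b_3$ dichotomy, the argument does not close.

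There is a second, smaller gap in the ``if'' direction. Condition (b) of $\Gamma_c$ is triggered at level $k$ itself by \emph{every} box $b''\in\lambda^{\beta(b_2)}$ with $\mathrm{ct}(b'')=\mathrm{ct}(b_2)\pm1$, and since $Q(b_1)=Q(b'')+k$ for all of these, membership in $\Gamma_c$ requires $P(b_1)>P(b'')$ for each one. This is not a consequence of $k_c$ or $l_c$; it is a constraint on the filling $P$, and ``fill the remaining values minimally compatibly with the partial order'' does not obviously satisfy it. One needs the paper's specific choice: take $i$ maximal such that some reverse standard tableau has value $i$ at $b_2$, set $P(b_2)=i-1$, $P(b_3)=i$, $P(b_1)=i+1$, and complete to a reverse standard tableau, so that the boxes $<b_2$ other than $b_3$ absorb all values exceeding $P(b_1)$ and one checks that none of them has content $\mathrm{ct}(b_2)\pm1$. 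Relatedly, your appeal to ``minimality of $k_c(b_1)$'' for condition (a) of $\Gamma_c$ covers only the case where the content-zero position of $\lambda^{\beta(b_1)-k'}$ is an actual box; when that component is empty the position is an outside addable box and one must instead invoke $l_c(b_1)\geq k_c(b_1)$.
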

\begin{proof}
For convenience, we define a statistic $l_c'(b)$ similar to $l_c(b)$, but without restricting to outside addable boxes: more precisely, $l'_c(b)$ is the smallest integer $l$ such that either there exists a box $b' \in \lambda^{\beta(b)-l}$ with
$$l=d_{\beta(b)}-d_{\beta(b)-l}+r(\mathrm{ct}(b)-\mathrm{ct}(b') \pm 1) c_0$$ or 
$$l=d_{\beta(b)}-d_{\beta(b)-l}+r \mathrm{ct}(b) c_0.$$  We will prove that $L_c(\lambda^\bullet$ is diagonalizable if and only if for every removable box $b$ of $\lambda^\bullet$, we have $l_c'(b) < k_c(b)$; one checks that $l_c'(b) < k_c(b)$ if and only if $l_c(b) < k_c(b)$, so this will finish the proof. 

Suppose first that there is a removable box $b_1 \in \lambda^\bullet$
with $k_c(b_1)< \infty$ and $k_c(b_1) \leq l_c'(b_1)$ and write $k=k_c(b_1)$.
Then there is $b_2$ in the upper or left rim of
$\lambda^{\beta(b_1)-k}$ with
$$k=d_{\beta(b_1)}-d_{\beta(b_2)}+r(\mathrm{ct}(b_1)-\mathrm{ct}(b_2))c_0.$$
 If $b_2$ is the upper left-hand corner of
$\lambda^{\beta(b_1)-k}$ then set $Q(b_1)=k-1$ and $Q(b)=0$ for all
other $b \in \lambda^\bullet$, and put $P(b_1)=1$, $P(b_2)=n$ and then
complete $P$ to a reverse standard Young tableau on
$\lambda^\bullet$.  It follows from the assumption $k_c(b_1) \leq
l_c'(b_1)$ that all the inequalities necessary for $(P,Q) \in \Gamma_c$ are
satisfied, and therefore $(P,Q)$ is a near fold in $\Gamma_c$ so
$L_c(\lambda^\bullet)$ is not diagonalizable.  

If $\mathrm{ct}(b_2) \neq 0$, then we define $Q(b_1)=k$ and $Q(b)=0$ for
all other $b \in \lambda^\bullet$.  We now define
$P$ for which $(P,Q) \in \Gamma_c$: let $b_3$ be the box with
$b_3<b_2$ and $\mathrm{ct}(b_3)=\mathrm{ct}(b_2) \pm 1$, let $i$ be maximal so that there
exists a reverse standard Young tableau $T$ on $\lambda^\bullet$ with
$T(b_2)=i$, and define $P(b_1)=i+1$, $P(b_2)=i-1$, $P(b_3)=i$ and then
complete $P$ to a reverse standard Young tableau on
$\lambda^\bullet-\{b\}$.  It is then straightforward to check that
$(P,Q) \in \Gamma_c$ is a near fold so that $L_c(\lambda^\bullet)$ is
not diagonalizable.

Conversely, suppose that $l_c'(b)<k_c(b)$ for all corner boxes $b$ such
that $k(b)<\infty$.  By Theorem~\ref{diag char1} it suffices to show that there
are no near folds in $\Gamma_c$.  Suppose towards a contradiction that
$(P,Q) \in \Gamma_c$ is a near fold.  By Lemma~\ref{fold char} there
is a corner box $b_1 \in \lambda^\bullet$, an integer $k \in \ZZ_{>0}$, and a box $b_2$ in the upper
or left rim of $\lambda^{\beta(b_1)-k}$ with
$$k=d_{\beta(b_1)}-d_{\beta(b_2)}+r(\mathrm{ct}(b_1)-\mathrm{ct}(b_2))c_0,$$
and one of the following holds:
\begin{enumerate}
\item[(a)] The box $b_2$ is the upper left-hand corner of
$\lambda^{\beta(b_1)-k}$, and $Q(b_1)=k-1$, $Q(b_2)=0$, $P(b_1)=1$,
and $P(b_2)=n$, or
\item[(b)] there is a box $b_3<b_2$ with
$\mathrm{ct}(b_3)=\mathrm{b_2} \pm 1$ and so that
$Q(b_1)=Q(b_2)+k=Q(b_3)+k$ and $P(b_1)=P(b_3)+1=P(b_2)+2$.
\end{enumerate}  Assume that case (b) holds; (a) is similar.  

By hypothesis there is some integer $0<l<k$ such that
either $l=d_{\beta(b_1)}-d_{\beta(b_1)-l}+r \mathrm{ct}(b_1) c_0$ (in
which case $(P,Q) \notin \Gamma_c$) or so that there is a box $b_4 \in
\lambda^{\beta(b_1)-l}$ with 
$$l=d_{\beta(b_1)}-d_{\beta(b_4)}+r(\mathrm{ct}(b_1)-\mathrm{ct}(b_4)
\pm 1) c_0$$ in which case also
$$0<k-l=d_{\beta(b_4)}-d_{\beta(b_2)}+r(\mathrm{ct}(b_4)-\mathrm{ct}(b_2)
\pm 1) c_0$$ and $k-l=\beta(b_4)-\beta(b_2) \ \mathrm{mod} \ r$,
implying that 
\begin{enumerate}
\item[(1)] $Q(b_1) \leq Q(b_4)+l$ with equality implying
$P(b_1)>P(b_4)$ and
\item[(2)] $Q(b_4) \leq Q(b_2)+k-l$ with equality implying
$P(b_4)>P(b_2)$.  
\end{enumerate} Observe first that if $b_4 = b_3$ then the requirement $Q(b_1) \leq Q(b_4)+l<Q(b_4)+k$ precludes $(P,Q) \in \Gamma_c$.  Thus $b_4 \neq b_3$.  On the other hand, combining (1) and (2) above shows that $Q(b_1) \leq Q(b_2)+k$ with equality implying $P(b_1)>P(b_4)>P(b_2)$, and this contradicts $P(b_1)=P(b_3)+1=P(b_2)+2$.
\end{proof} 

The modules $\Delta_c(\lambda^\bullet)$ are graded by polynomial degree, and the modules $L_c(\lambda^\bullet)$ are graded quotients (thanks to the deformed Euler operator from \cite{DuOp}), on which $W$ acts preserving the degree.  Writing $L^d_c(\lambda^\bullet)$ for the degree $d$ piece of $L_c(\lambda^\bullet)$, we define the graded character
$$\mathrm{char}(L_c(\lambda^\bullet)^W,q)=\sum_{d=0}^\infty \mathrm{dim}_\CC(L^d_c(\lambda^\bullet)^W) q^d.$$  By using Theorem 2.1 of \cite{DuGr}, we obtain the following corollary of Theorem \ref{diag thm}, where we define a column-strict tableau on $\lambda^\bullet$ to be a filling of its boxes by non-negative integers in such a way that within each component $\lambda^i$, the entries are weakly increasing left to right, and strictly increasing top to bottom.

\begin{corollary} \label{invariants char}
If $c_0 \neq 0$ and $L_c(\lambda^\bullet)$ is diagonalizable, then the graded character of $L_c(\lambda^\bullet)^W$ is given by the formula
$$\mathrm{char}(L_c(\lambda^\bullet)^W,q)=\sum_{d=0}^\infty a_d q^d,$$ where $a_d$ is the number of column-strict tableaux $Q$ on $\lambda^\bullet$ such that $Q(b)=\beta(b) \ \mathrm{mod} \ r$ for all boxes $b$ of $\lambda$, the sum 
$$\sum_{b \in \lambda^\bullet} Q(b)=d,$$ and for each positive integer $l$ the following conditions hold:
\begin{enumerate}
\item[(a)] we have $Q(b)<l$ whenever $b \in \lambda^i$ and the equation $d_i-d_{i-l}+r \mathrm{ct}(b) c_0=l$ holds, and
\item[(b)]  we have $Q(b_1)-Q(b_2) \leq l$ whenever $b_1 \in \lambda^i$, $b_2 \in \lambda^{i-l}$, and one of the equations $$d_i-d_{i-l}+r(\mathrm{ct}(b_1)-\mathrm{ct}(b_2) \pm 1) c_0=l$$ holds.
\end{enumerate}
\end{corollary}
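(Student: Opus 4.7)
The plan is to combine Theorem~\ref{diag thm} with Theorem 2.1 of \cite{DuGr}. By Theorem~\ref{diag thm}, $L_c(\lambda^\bullet)$ has a basis $\{f_{(P,Q)} : (P,Q) \in \Gamma_c\}$. I would first compute the polynomial degree of $f_{(P,Q)}$: using $\sum_i z_i = \sum_i y_i x_i + c_0 \sum_i \phi_i$ and the eigenvalue of the deformed Euler operator $\h$ of \cite{DuOp}, together with the explicit $z_i$-eigenvalues given in \S\ref{gamma subsection}, the contributions from the $d$-parameters, $c_0$, and the contents cancel out, leaving the degree equal to $\sum_{b \in \lambda^\bullet} Q(b)$, a quantity depending only on $Q$.

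Next I would invoke Theorem 2.1 of \cite{DuGr}, which produces a basis of the $W$-invariant subspace $\Delta(\lambda^\bullet)^W$ indexed by column-strict tableaux $Q$ on $\lambda^\bullet$ satisfying $Q(b) \equiv \beta(b) \pmod r$ (the congruence corresponding to $\zeta_i$-invariance). The associated symmetric Jack polynomial $J_Q$ lies in the span of the nonsymmetric polynomials $f_{(P',Q)}$ sharing the fixed second coordinate $Q$, so $J_Q$ has nonzero image in $L_c(\lambda^\bullet)^W$ precisely when some $(P,Q) \in \Gamma_c$ has the given $Q$ as its second coordinate.

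It then remains to translate the defining conditions of $\Gamma_c$ into conditions on $Q$ alone. Condition (a) of $\Gamma_c$ depends only on $Q$ and is exactly condition (a) of the corollary. The inequality part of condition (b) of $\Gamma_c$ becomes condition (b) of the corollary. The remaining ``equality implies $P(b_1) > P(b_2)$'' clause of (b) can always be realized by choosing $P$ to refine $Q$---that is, $P(b) < P(b')$ whenever $Q(b) < Q(b')$---since in the equality case $Q(b_1) = Q(b_2) + l$ with $l > 0$ one has $Q(b_1) > Q(b_2)$ strictly, hence $P(b_1) > P(b_2)$.

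The principal obstacle is verifying in the last step that a single tableau $P$ can simultaneously satisfy all the constraints imposed by condition (b) of $\Gamma_c$ as $(b_1, b_2)$ ranges over the relevant pairs of boxes, without cyclic or otherwise inconsistent requirements. This reduces to a combinatorial consistency check exploiting the column-strictness of $Q$, the congruence $Q(b) \equiv \beta(b) \pmod r$, and the partial order on boxes used to define $\Gamma$.
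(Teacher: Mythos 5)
Your proposal is correct and takes essentially the same route as the paper, which likewise obtains the corollary by combining Theorem~\ref{diag thm} with Theorem 2.1 of \cite{DuGr} (the paper gives no further detail). The ``consistency check'' you flag at the end is unproblematic: every constraint on $P$ demands $P(b_1)>P(b_2)$ only when $Q(b_1)>Q(b_2)$, or when $Q(b_1)=Q(b_2)$ and $b_1>b_2$ in the partial order of \ref{gamma subsection}, so any $P$ enumerating the boxes in increasing order of their $Q$-values, with ties broken by a linear extension of the reverse of that partial order, satisfies all of them simultaneously.
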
  Writing $e=\frac{1}{|W|}\sum_{w \in W} w$ for the symmetrizing idempotent, if the parameter $c$ is not on one of the hyperplanes specified by Theorem 3.4 of \cite{DuGr}, then the functor $M \mapsto eM=M^W$ from $H_c$ modules to $eH_c e$-modules is an equivalence, and the module $L_c(\lambda^\bullet)$ is finite dimensional if and only if $L_c(\lambda^\bullet)^W$ is.

\section{Proof of Theorem \ref{unitary thm}}

\subsection{} 
By Theorem \ref{diag thm}  $L_c(\lambda^\bullet)$ is unitary exactly if it is diagonalizable and for all $(P,Q) \in \Gamma_c$ we have $\la f_{P,Q},f_{P,Q} \ra \geq 0$. Since every $f_{P,Q}$ for $(P,Q) \in \Gamma_c$ may be obtained from those with $Q=0$ by applying an invertible sequence of intertwining operators, the following lemma allows for inductive control over the signs of the norm of $f_{P,Q}$ for $(P,Q) \in \Gamma_c$. It is an immediate consequence of the proof of Theorem 6.1 of \cite{Gri2}, translated into $(P,Q)$ notation.
\begin{lemma}
\item[(a)] Suppose $(P,Q) \in \Gamma_c$. Then for $1 \leq i \leq n-1$ we have
$$\la \sigma_i f_{P,Q},\sigma_i f_{P,Q} \ra=\la f_{P,Q},f_{P,Q} \ra$$ if $Q(P^{-1}(i))-Q(P^{-1}(i+1)) \neq \beta(P^{-1}(i))-\beta(P^{-1}(i+1))$ and
$$\la \sigma_i f_{P,Q},\sigma_i f_{P,Q} \ra=\frac{(\delta-r c_0)(\delta+r c_0)}{\delta^2} \la f_{P,Q},f_{P,Q} \ra$$ with
$$\delta=Q(P^{-1}(i))-Q(P^{-1}(i+1))-(d_{\beta(P^{-1}(i))}-d_{\beta(P^{-1}(i+1))})-r(\mathrm{ct}(P^{-1}(i))-\mathrm{ct}(P^{-1}(i+1)))$$ else.
\item[(b)] For $(P,Q) \in \Gamma_c$ we have
$$\la \Phi f_{P,Q},\Phi f_{P,Q} \ra=\left(Q(P^{-1}(1))+1- (d_{\beta(P^{-1}(1))}-d_{\beta(P^{-1}(1))-Q(P^{-1}(1))-1})-r \mathrm{ct}(P^{-1}(1)) c_0 \right) \la f_{P,Q},f_{P,Q} \ra.$$
\end{lemma}

The lemma implies that for $c_0 \neq 0$, the module $L_c(\lambda^\bullet)$ is unitary if and only if the following two conditions hold: for all $(P,Q) \in \Gamma_c$,
\begin{equation} \label{1 res}
Q(P^{-1}(1))+1 \geq d_{\beta(P^{-1}(1))}-d_{\beta(P^{-1}(1))-Q(P^{-1}(1))-1}+r \mathrm{ct}(P^{-1}(1)) c_0
\end{equation} and if $(P,Q) \in \Gamma_c$ and $1 \leq i \leq n-1$ with $Q(P^{-1}(i))-Q(P^{-1}(i+1))=\beta(P^{-1}(i))-\beta(P^{-1}(i+1)) \ \mathrm{mod} \ r$ then setting $b_1=P^{-1}(i)$ and $b_2=P^{-1}(i+1)$
\begin{equation} \label{2 res}
\left(Q(b_1)-Q(b_2)-(d_{\beta(b_1)}-d_{\beta(b_2)})-r(\mathrm{ct}(b_1)-\mathrm{ct}(b_2))c_0 \right)^2 \geq (rc_0)^2.
\end{equation}  The last condition may be rephrased: the numbers
\begin{equation} \label{3 res}
Q(b_1)-Q(b_2)-(d_{\beta(b_1)}-d_{\beta(b_2)})-r(\mathrm{ct}(b_1)-\mathrm{ct}(b_2) \pm 1)c_0
\end{equation} have the same sign (are both weakly positive or both weakly negative). Assuming $c_0 >0$, the only way this can fail is if 
$$Q(b_1)-Q(b_2)-(d_{\beta(b_1)}-d_{\beta(b_2)})-r(\mathrm{ct}(b_1)-\mathrm{ct}(b_2) - 1)c_0>0$$ and 
$$Q(b_1)-Q(b_2)-(d_{\beta(b_1)}-d_{\beta(b_2)})-r(\mathrm{ct}(b_1)-\mathrm{ct}(b_2) + 1)c_0 < 0 .$$

\subsection{Construction of unitarity-preventing $(P,Q)$'s} The following lemma is the key step in the proof that the existence of appropriate blocking sequences is a necessary condition for unitarity.

\begin{lemma} \label{PQ construct}
\item[(a)] Let $b \in \lambda^i$ and $b' \in \lambda^j$ and suppose that $b \nleq b'$ and there is no blocking sequence $B$ for $(b,b')$ with $c \in L_B$. Then there exist $1 \leq a \leq n-1$ and  $(P,Q) \in \Gamma_c(\lambda^\bullet)$ with $P(b)=a+1$, $P(b')=a$, $Q(b)=m_{ij}$ and $Q(b')=0$.
\item[(b)] Let $b \in \lambda^i$ and $0 \leq j \leq r-1$ and suppose that there is no blocking sequence $B$ for $(b,j)$ with $c \in L_B$. Then there is some $(P,Q) \in \Gamma_c$ with $P(b)=1$ and $Q(b)=m_{ij}-1$. 
\end{lemma}

\begin{proof}

We describe a general procedure for producing an element $(P,Q) \in \Gamma_c$, with different initial steps for parts (a) and (b) of the lemma. For part (a): for each $b'' \geq b$ set $Q(b'')=m_{ij}$, and for each $b''' \leq b'$ set $Q(b''')=0$.  Define $P(b'')$ for all $b'' > b$ and all $b'''<b'$ in such a way that $P$ is decreasing on these posets, and furthermore so that the set of numbers thus defined is equal to the set $\{d,d+1,\dots,n\}$, where $n-d+1$ is the number of boxes at least $b$ or at most $b'$ (this last condition will force $P(b'')<d$ for the remaining boxes $b''$ of $\lambda^\bullet$). For part (b): set $Q(b)=m_{ij}-1$ and for each $b' > b$ set $Q(b')=m_{ij}$. Set $P(b)=1$ and define $P(b')$ on the set of boxes $b' > b$ in such a way that $P$ is decreasing on this poset and the set of numbers so used is of the form $\{d+1,\dots,n\}$, where $n-d$ is the number of boxes strictly larger than $b$. The remainder of the construction in both cases (a) and (b) of the lemma is now the same.

Assuming we have defined $Q$ and $P$ on all boxes in $\lambda^{i-1}, \lambda^{i-2},\dots,\lambda^{i-k+1}$, we define them on $\lambda^{i-k}$ by induction, choosing the minimal $b_1$ for which $Q$ and $P$ are not already defined.  Choose $P(b_1)$ maximal from among the unused numbers in $\{1,2,\dots,n\}$.  We choose $Q(b_1)$ minimal subject to the conditions:
\begin{enumerate}
\item[(a)] $Q(b_1) \geq 0$,
\item[(b)] $Q(b_1) \geq Q(b_2)$ for all $b_2 \leq b_1$, and
\item[(c)] for each box $b_2$ such that $Q(b_2)$ and $P(b_2)$ have already been defined, and for any positive integer $l$ with $l=\beta(b_2)-\beta(b_1) \ \mathrm{mod} \ r$ and 
$$l=d_{\beta(b_2)}-d_{\beta(b_1)}+r(\mathrm{ct}(b_2)-\mathrm{ct}(b_1) \pm 1)c_0,$$ we enforce $Q(b_2)-Q(b_1) \leq l$.
\end{enumerate} One now checks, using the absence of blocking sequences, that the pair $(P,Q)$ so defined belongs to $\Gamma_c$. 
\end{proof}

\subsection{Proof of Theorem \ref{unitary thm} in case $c_0=0$.} If $c_0=0$ then $\sigma_i=s_i$ so all $f_{P,Q}$ are well-defined, and by the argument of Theorem 6.1 of \cite{Gri2} we have
$$\la s_i f_{P,Q},s_i f_{P,Q} \ra=\la f_{P,Q},f_{P,Q} \ra$$ and
$$\la \Phi f_{P,Q},\Phi f_{P,Q} \ra=\left(Q(P^{-1}(1))+1-(d_{P^{-1}(1)}-d_{P^{-1}(1)-Q(P^{-1}(1))-1}) \right) \la f_{P,Q},f_{P,Q} \ra.$$ It follows that if $d_i-d_j \leq m_{ij}$ for all pairs $i,j$ then the contravariant form is positive semi-definite on $\Delta_c(\lambda^\bullet)$ and hence $L_c(\lambda^\bullet)$ is unitary, and that if $d_i-d_j > m_{ij}$ for some $i$ with $\lambda^i \neq \emptyset$ then there is some $f_{P,Q}$ with negative square norm and hence $L_c(\lambda^\bullet)$ is not unitary.

\subsection{} From now on we assume $c_0 > 0$. We first prove that if one of the conditions (a), (b) or (c) in the statement of Theorem \ref{unitary thm} fails to hold then $L_c(\lambda^\bullet)$ is not unitary. If (a) fails, then $L_c(\lambda^\bullet)$ is not diagonalizable, and hence it is not unitary. If (b) fails then there is a box $b \in \lambda^i$ and an integer $j$ for which $$m_{ij} < d_i-d_j+ rÊ\mathrm{ct}(b) c_0$$ and so that there is no blocking sequence $B$ for $(b,j)$ with $c \in L_B$. Then by Lemma \ref{PQ construct} there is some $(P,Q) \in \Gamma_c$ with $P(b)=1$ and $Q(b)=m_{ij}-1$, and using \eqref{1 res} shows that $L_c(\lambda^\bullet)$ is not unitary. Finally, suppose that there are boxes $b \in \lambda^i$ and $b' \in \lambda^j$ with 
$$m_{ij}< d_i-d_j+r (\mathrm{ct}(b)-\mathrm{ct}(b')+1)c_0$$ but so that there is no blocking sequence $B$ for $(b,b')$ with $c \in L_B$. Since a blocking sequence for $(b,j)$ would be one for $(b,b')$, there is no blocking sequence $B$ for $(b,j)$ with $c \in L_B$ and hence by the previous part of the proof we may assume that
$$d_i-d_j+r \mathrm{ct}(b) c_0 \leq m_{ij}.$$ Let $k \in \ZZ$ be the largest integer so that 
$$m_{ij} < d_i-d_j+r (\mathrm{ct}(b)-k+1)c_0,$$ so that we have $\mathrm{ct}(b') \leq k \leq 0$. It follows that there is some $b'' \leq b'$ with $\mathrm{ct}(b'')=k$. Since a blocking sequence $B$ for $(b,b'')$ is one for $(b,b')$, there is no blocking sequence $B$ for $(b,b'')$ with $c \in L_B$. In particular $d_i-d_j+r(\mathrm{ct}(b)-\mathrm{ct}(b'')-1) c_0 \neq m_{ij}$ (since $(b,b'')$ is a blocking sequence for $(b,b'')$) and it follows that
$$d_i-d_j+r(\mathrm{ct}(b)-\mathrm{ct}(b'')-1) c_0 < m_{ij} < d_i-d_j+r(\mathrm{ct}(b)-\mathrm{ct}(b'')+1) c_0.$$ Moreover Lemma \ref{PQ construct} implies that there is some $(P,Q)Ê\in \Gamma_c$ with $P(b)=i$, $P(b'')=i+1$, $Q(b)=m_{ij}$, and $Q(b'')=0$. Now \eqref{3 res} shows $L_c(\lambda^\bullet)$ is not unitary. 

\subsection{}  For the converse, we must prove that if (a), (b) and (c) in the statement of Theorem \ref{unitary thm} hold then $L_c(\lambda^\bullet)$ is unitary. We assume that $(P,Q) \in \Gamma$ and $b_1=b$, $b_2=b'$ are such that the numbers in \eqref{3 res} have different signs, and we will show $(P,Q) \notin \Gamma_c$. First, we may assume that $b$ is not at most $b'$, by interchanging them if necessary. Then the inequality
$$m_{ij}<d_{\beta(b)}-d_{\beta(b')}+r(\mathrm{ct}(b)-\mathrm{ct}(b')+1)c_0,$$ implies that there is a blocking sequence $B$ for $(b,b')$ with $c \in L_B$. Now Lemma~\ref{blocking1} below implies that $(P,Q)$ satisfying \eqref{3 res} cannot be in $\Gamma_c$.  Likewise if \eqref{1 res} fails for $(P,Q) \in \Gamma_c$ then with $b=P^{-1}(1)$, $i=\beta(b)$, and $j=\beta(b)-Q(b)-1$ we have
$$m_{ij} <d_i-d_j+r \mathrm{ct}(b) c_0,$$ and hence there exists a blocking sequence $B$ for $(b,j)$ with $c \in L_B$, and Lemma~\ref{blocking2} below implies that $(P,Q) \notin \Gamma_c$. This completes the proof of Theorem \ref{unitary thm}.

\begin{lemma}\label{blocking2} If $b \in \lambda^i$ and $0 \leq j \leq r-1$ is a pair for which a blocking sequence $B$ with $c \in L_B$ exists, and there is $(P,Q) \in \Gamma_c$ with $Q(b) \geq m_{ij}-1$ and $P(b)=1$ then we must have
$$d_{\beta(b)}-d_j+r\mathrm{ct}(b)c_0=m_{ij}.$$
\end{lemma} 
\begin{proof}  If $B=(b_1,\dots,b_{2q+1},\ell)$ is a blocking sequence for $(b,j)$ with $c \in L_B$ then we obtain
$$Q(b) \leq Q(b_{2q+1})+ \sum_{k=1}^ {q} m_{\beta(b_{2k-1}),\beta(b_{2k})} \leq m_{\beta(b_{2q+1}),\ell}-1+\sum_{k=1}^ {q} m_{\beta(b_{2k-1}),\beta(b_{2k})} \leq m_{ij}-1,$$ with equality implying $1=P(b) \geq P(b_{2q+1})+d-1$, where $d$ is the number of distinct boxes appearing in the sequence $b,b_1,\dots,b_{2q+1}$.  It follows that $d=1$, that $q=0$, that $b=b_1$, and that 
$$d_{\beta(b_1)}-d_l+r \mathrm{ct}(b_1) c_0=m_{il}.$$ But now we have $m_{ij} \leq m_{il}$ since $(P,Q) \in \Gamma_c$, and the opposite inequality follows from the definition of blocking sequence.  Thus $m_{ij}=m_{il}$, whence $l=j$ and we are done.
\end{proof}

\begin{lemma} \label{blocking1} If $b \in \lambda^i$, $b' \in \lambda^j$, a blocking sequence for $(b,b')$ exists, and $(P,Q) \in \Gamma_c$ with $Q(b)-Q(b') \geq m_{ij}$ and $P(b)=P(b')+1$, then $Q(b)=Q(b')+m_{ij}$ and
$$m_{ij}=d_i-d_j+r (\mathrm{ct}(b)-\mathrm{ct}(b') \pm 1) c_0.$$
\end{lemma}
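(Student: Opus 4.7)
The plan is to bound $Q(b)-Q(b')$ from above using the data of the blocking sequence and the defining inequalities of $\Gamma_c$, and then use the hypothesis $Q(b)-Q(b')\geq m_{ij}$ to force equality at every step; the resulting equalities collapse the sequence to a minimal form and yield the asserted content equation.

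First I would dispose of the case where the blocking sequence has the first form $(b_0,b_1,\dots,b_{2q+1},l)$ with $l=j$. The terminal blocking equation is precisely the hypothesis of condition (a) in the definition of $\Gamma_c$ applied to $b_{2q+1}$, so $Q(b_{2q+1})\leq m_{\beta(b_{2q+1}),j}-1$. Applying condition (b) of $\Gamma_c$ at each pair $(b_{2k-1},b_{2k})$ using the interior blocking equations, together with the $\Gamma$-inequality $Q(b_{2k})\leq Q(b_{2k+1})$ produced by $b_{2k}\leq b_{2k+1}$, and telescoping, yields
$$Q(b)\;\leq\;\bigl(m_{\beta(b_{2q+1}),j}-1\bigr)+\sum_{k=1}^q m_{\beta(b_{2k-1}),\beta(b_{2k})}\;\leq\;m_{ij}-1,$$
the last step being the preblocking inequality noted in the definition of a preblocking sequence for $(b,j)$. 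Since $Q(b')\geq 0$, this contradicts $Q(b)-Q(b')\geq m_{ij}$, so only the second form of blocking sequence can be consistent with the hypotheses.

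Now suppose the blocking sequence has the second form $(b_0,\dots,b_{2q+1})$ with $b_0=b$ and $b_{2q+1}=b'$. The same telescoping, without the terminal $\Gamma_c$(a)-bound, gives $Q(b)-Q(b')\leq\sum_{k=1}^q m_{\beta(b_{2k-1}),\beta(b_{2k})}$. The strict monotonicity of $m_{i,\beta(b_{2k})}$ and the fact that all these numbers lie in $[1,r]$ prevent any modular wrap-around in the telescoping of consecutive $m$-values, so the sum equals $m_{i,\beta(b_{2q+1})}=m_{ij}$. Combined with the hypothesis $Q(b)-Q(b')\geq m_{ij}$, equality must hold throughout: $Q(b_{2k})=Q(b_{2k+1})$ for $0\leq k\leq q$, and $Q(b_{2k-1})=Q(b_{2k})+m_{\beta(b_{2k-1}),\beta(b_{2k})}$ for $1\leq k\leq q$. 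The equality clauses of $\Gamma_c$(b) and of the $\Gamma$-condition respectively force $P(b_{2k-1})>P(b_{2k})$ for $1\leq k\leq q$, and either $b_{2k}=b_{2k+1}$ or $P(b_{2k})>P(b_{2k+1})$ for $0\leq k\leq q$; in particular $P(b)\geq P(b')+q$.

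Comparing with $P(b)=P(b')+1$ forces $q\leq 1$. The case $q=0$ would require $b\leq b'$, hence $i=j$ and $m_{ij}=r>0$, while $\Gamma$ gives $Q(b)\leq Q(b')$, contradicting the hypothesis. So $q=1$, and a total drop of exactly one in the four-term chain $P(b_0)\geq P(b_1)>P(b_2)\geq P(b_3)$ collapses to $P(b_0)=P(b_1)$ and $P(b_2)=P(b_3)$; by the $\Gamma$-condition these equalities force $b_0=b_1=b$ and $b_2=b_3=b'$. The single interior blocking equation at $k=1$ therefore reads precisely
$$d_i-d_j+r(\mathrm{ct}(b)-\mathrm{ct}(b')\pm 1)c_0=m_{ij},$$
and the $Q$-equalities give $Q(b)=Q(b')+m_{ij}$. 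The main obstacle is the telescoping identity $\sum_{k=1}^q m_{\beta(b_{2k-1}),\beta(b_{2k})}=m_{ij}$, which depends delicately on the strict monotonicity condition built into condition (d) of the preblocking definition (without it, modular wrap would destroy the identity); the remainder of the argument is careful bookkeeping of which $\Gamma$- and $\Gamma_c$-inequalities become equalities and what $P$-inequalities these produce.
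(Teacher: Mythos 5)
Your proof is correct and follows essentially the same route as the paper's: telescope the $\Gamma$- and $\Gamma_c$-inequalities along the blocking sequence, dispose of the first form of blocking sequence via the terminal bound $Q(b_{2q+1})\leq m_{\beta(b_{2q+1}),l}-1$ together with the preblocking inequality, and in the second form force equality throughout so that counting the strict $P$-drops collapses the sequence to $b_0=b_1=b$, $b_2=b_3=b'$. Two cosmetic slips only: the terminal integer $l$ in a first-form sequence need not equal $j$ (the definition only demands $m_{il}\leq m_{ij}$, which is all your estimate uses), and your exact identity $\sum_{k=1}^q m_{\beta(b_{2k-1}),\beta(b_{2k})}=m_{ij}$ fails for $q=0$, but you rule that case out separately and the paper's argument needs only the inequality $\leq m_{ij}$ in any case.
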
 
\begin{proof}  If there is a blocking sequence $B$ for $(b,j)$ then the proof of Lemma \ref{blocking2} gives $Q(b) \leq m_{ij}-1$, contradiction. So any blocking sequence $B$ for $(b,b')$ with $c \in L_B$ must be of the form $B=(b_1,\dots,b_{2q})$.  Since $(P,Q) \in \Gamma$, for $1 \leq k \leq q-1$ we have $Q(b_{2k}) \leq Q(b_{2k+1})$ with equality implying that $P(b_{2k})>P(b_{2k+1})$ unless $b_{2k}=b_{2k+1}$.  Since $(P,Q) \in \Gamma_c$, for $1 \leq k \leq q$ we have $Q(b_{2k-1}) \leq Q(b_{2k})+m_{\beta(b_{2k-1}),\beta(b_{2k})}$ with equality implying $P(b_{2k-1})>P(b_{2k})$.  Combining these inequalities, we have 
$$Q(b_0) \leq Q(b')+ \sum_{k=1}^q m_{\beta(b_{2k-1}),\beta(b_{2k})} \leq Q(b')+m_{ij},$$ with equality implying $P(b) \geq P(b')+d-1$ where $d$ is the number of distinct boxes that appear in the sequence $b,b_1,\dots,b_{2q},b'$.  Since $P(b)=P(b')+1$ we must have $d=2$, whence $b_1=b$, $b_2=b'$, and 
$$d_{\beta(b)}-d_{\beta(b')}+r(\mathrm{ct}(b)-\mathrm{ct}(b') \pm 1)c_0=m_{ij}$$ so we are done.
\end{proof}

\section{Explicit results for classical Weyl groups} \label{classical type}

\subsection{The symmetric group} Here we assume $r=1$. The parameter is a single number $c=c_0$, which we may assume is non-negative $c \geq 0$. The next corollary recovers the result of \cite{EtSt} classifying the unitary representations of the type A rational Cherednik algebra, and the classification due to Suzuki (\cite{Suz}; see also \cite{SuVa}) of the diagonalizable irreducible representations. The following diagram should help the reader to visualize the notation introduced in the corollary.

$$\begin{ytableau}
 \ & & & & & b_1 \\
 & & & & & \\
 & & & & & b \\
 & & &  \\
 & & & \\
 b_2 & & & 
\end{ytableau}$$

\begin{corollary} \label{type a corollary}
Suppose $c \geq 0$ and $\lambda$ is a partition of $n$. If $\lambda=(1^n)$, then $L_c(\lambda)$ is unitary for all $c \geq 0$. Otherwise, let $b_1$ be the box of $\lambda$ of maximum content, let $b_2$ be the box of $\lambda$ of minimum content, and let $b$ be the removable box of $\lambda$ of maximum content. Then $L_c(\lambda)$ is diagonalizable unless $c=k/m$ for relatively prime positive integers $k$ and $m$ with $m \leqÊ\mathrm{ct}(b)-\mathrm{ct}(b_2)$, and $L_c(\lambda)$ is unitary if and only if either $$c \leq \frac{1}{\mathrm{ct}(b_1)-\mathrm{ct}(b_2)+1}$$ or $c=1/m$ for an integer $m$ with $$m \geq \mathrm{ct}(b)-\mathrm{ct}(b_2)+1.$$
\end{corollary}  
\begin{proof}
We define $b(\lambda)=\mathrm{ct}(b)-\mathrm{ct}(b_2)+1$. We have $k_c(b)=\infty$ unless $c=k/m$ for relatively prime positive integers $k$ and $m$ with $1 \leq m \leq b(\lambda)-1$, in which case $k_c(b)=k=l_c(b)$. Moreover, if $k_c(b)=\infty$ then the same is true for all removable boxes of $\lambda$. It now follows from Theorem \ref{diag thm} that for $c \geq 0$ the module $L_c(\lambda)$ is $\ttt$-diagonalizable if and only if $c$ is not a rational number of the form $c=k/m$ with $1 \leq m \leq b(\lambda)-1$. This recovers the classification from \cite{Suz} (see also \cite{SuVa}).   

We now compute the set of $c \geq 0$ for which $L_c(\lambda)$ is unitary. If $\lambda=(1^n)$ then $k_c(b)=\infty$ for the only removable box, so it is diagonalizable for all $c \geq 0$, and since $\mathrm{ct}(b) c \leq 0$ for all boxes $b$ of $\lambda$ and $b_1 \leq b_2$ for all pairs of boxes with $(\mathrm{ct}(b_1)-\mathrm{ct}(b_2)+1) c >0$, Theorem \ref{unitary thm} shows that $L_c((1^n))$ is unitary for all $c \geq 0$. So from now on we assume $\lambda \neq (1^n)$. 

 Suppose first that $L_c(\lambda)$ is unitary. If $c > 1/(\mathrm{ct}(b_1)-\mathrm{ct}(b_2)+1)$, then since $\lambda \neq (1^n)$ we do not have $b_1 \leq b_2$, and hence by Theorem \ref{unitary thm} there must exist a blocking sequence $B$ for $(b_1,b_2)$ with $c \in L_B$. The definition of $L_B$ and our assumption that $c \geq 0$ now shows that $c=1/m$ for some positive integer $m$. Since $L_c(\lambda)$ is diagonalizable, the previous paragraph implies $m \geq b(\lambda)$. Thus we have either $c \leq 1/(\mathrm{ct}(b_1)-\mathrm{ct}(b_2)+1)$ or $c=1/m$ for an integer $m$ with $m \geq b(\lambda)$. 

Conversely, if $0 \leq c \leq 1/(\mathrm{ct}(b_1)-\mathrm{ct}(b_2)+1)$ then we have already seen that $L_c(\lambda)$ is diagonalizable. If $b \in \lambda$ then 
$$\mathrm{ct}(b) c \leq (\mathrm{ct}(b_1)-\mathrm{ct}(b_2)+1)c < 1,$$  and likewise we have $$(\mathrm{ct}(b)-\mathrm{ct}(b')+1)c \leq (\mathrm{ct}(b_1)-\mathrm{ct}(b_2)+1) c \leq 1$$ for all pairs $b,b'$ of boxes of $\lambda$, so by Theorem \ref{unitary thm} $L_c(\lambda)$ is unitary.

Finally suppose $c=1/m$ for some integer $m \geq b(\lambda)$. We have seen above that in this case $L_c(\lambda)$ is diagonalizable. Observe first that we always have $b(\lambda) \geq \mathrm{ct}(b_1)$. Thus for all $b' \in \lambda$, we have
$$\mathrm{ct}(b') c \leq \mathrm{ct}(b_1) \cdot 1/ b(\lambda) \leq 1.$$ Hence (c) of Theorem \ref{unitary thm} holds. If $b',b'' \in \lambda$ are boxes with 
$$(\mathrm{ct}(b')-\mathrm{ct}(b'')+1)c > 1$$ then 
$$\mathrm{ct}(b')-\mathrm{ct}(b'')+1 > m.$$ We show that there is a blocking sequence $B$ for $(b',b'')$ with $c \in L_B$. If $b''' \leq b''$ then a blocking sequence for $(b',b''')$ is one for $(b',b'')$ and if $b''' \geq b'$ then a blocking sequence for $(b''',b'')$ is one for $(b',b'')$. Furthermore, moving $b'$ to the right increases its content and moving $b''$ to the left decreases its content. So we may assume that there is no box to the right of $b'$ in $\lambda$ and there is no box to the left of $b''$ in $\lambda$. The inequality
$$\mathrm{ct}(b')-\mathrm{ct}(b'')+1 >m \geq b(\lambda) \quadÊ\implies \mathrm{ct}(b') > b(\lambda)+\mathrm{ct}(b'') -1 \geq \mathrm{ct}(b)  $$ then implies that $b_1 \leq b' < b$, and hence there is a box $b'''$ with $b' < b''' \leq b$ and
$$\mathrm{ct}(b''')-\mathrm{ct}(b'')+1=m,$$ so that $B=(b''',b'')$ is a blocking sequence for $(b',b'')$ with $c \in L_B$. 
\end{proof}

\subsection{Type B} For $r=2$ (essentially, for the Weyl groups of type $B$/$C$ and $D$) our results can also be made completely explicit. The Weyl group of type $B_n$ is $G(2,1,n)$, which contains two conjugacy classes of reflections: those conjugate to the transposition $(12)$, and those conjugate to the transformation $(x_1,x_2,\dots,x_n) \mapsto (-x_1,x_2,\dots,x_n)$ changing the sign of the first coordinate and leaving the remaining coordinates fixed. We will write $c$ and $d$ for the parameters $c_r$ attached to these conjugacy classes; in terms of $c_0$, $d_0$, and $d_1$ we then have
$$c=c_0 \quad \text{and} \quad d=d_0=-d_1.$$

First we will assume $\lambda^\bullet=(\lambda,\emptyset)$ where $\lambda$ is a partition of $n$.  For the next two corollaries, we need some notation for certain boxes of $\lambda$, and a diagram illustrating the notation. We suppose that $\lambda=(6,6,6,4,4,4)$ with Young diagram and certain marked boxes as illustrated here:
$$\begin{ytableau}
 \ & & & & & b_1 \\
 & & & & & \\
 & & & & & b_2 \\
 & & & b_3 \\
 & & & \\
 b_5 & & & b_4
\end{ytableau} $$ The significance of these particular boxes is as follows: $b_1$ is the box of $\lambda$ of largest content; $b_2$ is the removable box of largest content; $b_4$ is the removable box of second-largest content; $b_3$ is the highest box in the right rim of $\lambda$ and directly above $b_4$; finally, $b_5$ is the box of smallest content. If $\lambda$ is a rectangle then the boxes $b_3$ and $b_4$ do not exist, and we will not make use of them in this case. It may happen that some of these boxes are the same. The analog of the theorem of Suzuki \cite{Suz} and Cherednik \cite{Che1} in this case is contained in the following corollaries.

\begin{corollary} \label{B diag 1}
Suppose $\lambda$ is a rectangle or $b_4=b_5$, and let $\lambda^\bullet=(\lambda,\emptyset)$. If $\lambda=(1^n)$, then $L_c(\lambda^\bullet)$ is diagonalizable for all $c \geq 0$. Otherwise, $L_c(\lambda^\bullet)$ is diagonalizable if and only if
\begin{enumerate}
\item[(a)] $c$ is not a rational number of denominator at most $\mathrm{ct}(b_2)-\mathrm{ct}(b_5)$, or
\item[(b)] an equation of the form
$$d+\mathrm{ct}(b_2) c=m/2$$ holds for some positive odd integer $m$, and $c$ is not of the form $c=k/\ell$ for positive coprime integers $k$ and $\ell$ with $2k \leq m$. 
\end{enumerate}
\end{corollary}

\begin{corollary} \label{B diag 2}
Suppose $\lambda$ is not a rectangle and $b_4 \neq b_5$. With notation as above, and assuming $c \geq 0$ and $\lambda^\bullet=(\lambda,\emptyset)$, then $L_c(\lambda^\bullet)$ is diagonalizable if and only if
\begin{enumerate}
\item[(a)] $c$ is not a rational number of denominator at most $\mathrm{ct}(b_2)-\mathrm{ct}(b_5)$, or
\item[(b)] $c=k/l$ for coprime positive integers $k$ and $\ell$ such that $\mathrm{ct}(b_4)-\mathrm{ct}(b_5)+1 \leq \ell \leq \mathrm{ct}(b_2)-\mathrm{ct}(b_5)$ and an equation of the form
$$d+ \mathrm{ct}( b_2) c=m/2$$ holds for some positive odd integer $m<2k$.  
\end{enumerate}
\end{corollary} One proves these corollaries by the same technique as the type A case. Using these corollaries we will deduce the next result on unitarity.

\begin{corollary} \label{B corollary 1}
If $\lambda=(1^n)$ and $c \geq 0$, then $L_c(\lambda^\bullet)$ is unitary if and only if either $d \leq 1/2$ or $d+\ell c=1/2$ for some integer $\ell$ with $-(n-1) \leq \ell \leq -1$. Assuming $c \geq 0$ and $\lambda \neq (1^n)$, the representation $L_c(\lambda,\emptyset)$ is unitary if and only if $(c,d)$ belongs to at least one of the following sets:
\begin{enumerate}
\item[(a)]  The set of parameters $(c,d)$ satisfying the inequalities $$c \leq \frac{1}{\mathrm{ct}(b_1)-\mathrm{ct}(b_5)+1} \quadÊ\text{and} \quad d+\mathrm{ct}(b_1)c \leq \frac{1}{2},$$
\item[(b)] for each positive integer $\ell$ with $\mathrm{ct}(b_2)-\mathrm{ct}(b_5)+1 \leq \ell \leq \mathrm{ct}(b_1)-\mathrm{ct}(b_5)+1$, the ray consisting of points $(c,d)$ such that $c=1/\ell$ and $$d+ \mathrm{ct}(b_1) c \leq \frac{1}{2},$$
\item[(c)] for each positive integer $\ell$ with $\mathrm{ct}(b_2) < \ell \leq \mathrm{ct}(b_1)$, the segment consisting of points $(c,d)$ with $$d+ \ell c=\frac{1}{2} \ \text{and} \ c \leq \frac{1}{\ell-\mathrm{ct}(b_5)},$$
\item[(d)] the points $(c,d)$ such that $d+\mathrm{ct}(b_2) c=\frac{1}{2}$, and, if (i) $\lambda$ is not a rectangle and (ii) $b_4 \neq b_5$, such that either $$c \leq \frac{1}{\mathrm{ct}(b_3)-\mathrm{ct}(b_5)+1} \quad \text{or} \quad c=\frac{1}{\ell-\mathrm{ct}(b_5)+1}$$ for an integer $\ell$ satisfying $\mathrm{ct}(b_4) \leq \ell < \mathrm{ct}(b_3)$.
\item[(e)] For each pair $(\ell,m)$ of integers with $\mathrm{ct}(b_2)+1 \leq \ell \leq \mathrm{ct}(b_1)-1$  and $\mathrm{ct}(b_2)-\mathrm{ct}(b_5)+1 \leq m \leq \ell-\mathrm{ct}(b_5)$ the point $P_{\ell,m}$ satisfying 
$$d+\ell c=1/2 \quad \text{and} \quad c=1/m.$$
\end{enumerate}
\end{corollary}
\begin{proof}
Theorem \ref{unitary thm} implies that if $c=0$ then $L_c(\lambda^\bullet)$ is unitary if and only if $d \leq 1/2$.  Suppose first that $(c,d)$ is such that $L_c(\lambda^\bullet)$ is unitary. If $\lambda=(1^n)$ and $d>1/2$, then by Theorem \ref{unitary thm} there must exist a blocking sequence $B$ for $(b_1,1)$ with $c \in L_B$. Thus an equation of the form $d+\ell c=1/2$ holds for some $-(n-1) \leq \ell \leq -1$. So we may suppose $\lambda \neq (1^n)$ and $c>0$. 

We will prove that $(c,d)$ is of one of the types (a)-(e) in the statement of the theorem above. We may assume that at least one of the inequalities in (a) fails. 

Case 1: if $d+\mathrm{ct}(b_1) c \leq 1/2$ then we must have $$c >  \frac{1}{\mathrm{ct}(b_1)-\mathrm{ct}(b_5)+1}.$$  Suppose first that $d+\mathrm{ct}(b_1)c < 1/2$. It follows that $d+\mathrm{ct}(b)c < 1/2$ for all boxes $b \in \lambda$, and hence $c \notin L_B$ for all blocking sequences of the form $(b,1)$. Since $\lambda  \neq (1^n)$ we do not have $b_1 \leq b_5$ and by Theorem \ref{unitary thm} there must exist a blocking sequence $B=(b,b')$ for $(b_1,b_2)$ with $c \in L_B$. This implies $c=1/\ell$ for some positive integer $\ell$. By Corollary \ref{B diag 2} we must have $\ell \geq \mathrm{ct}(b_2)-\mathrm{ct}(b_5)+1$ and we are in case (b). 

Suppose next that $d+\mathrm{ct}(b_1)c=1/2$ (thus $(b_1,1)$ is a blocking sequence for $(b_1,b_5)$ with $c \in L_B$ so we cannot quite conclude as we just did). Either $b_1=b_2$ or there is a box $b >b_1$ with $\mathrm{ct}(b)=\mathrm{ct}(b_1)-1$. We will treat these two subcases next.

Subcase (i): $b_1=b_2$. If $\lambda$ is a rectangle (necessarily $\lambda=(n)$), if $b_4=b_5$, or if $$c \leq \frac{1}{\mathrm{ct}(b_3)-\mathrm{ct}(b_5)+1}$$ then we are in case (d). So we suppose $\lambda$ is not a rectangle, $b_4 \neq b_5$, and we have
$$c > \frac{1}{\mathrm{ct}(b_3)-\mathrm{ct}(b_5)+1}.$$ Since $b_4 \neq b_5$ we do not have $b_3 \leq b_5$, and (b) of Theorem \ref{unitary thm} implies that there is a blocking sequence $B$ for $(b_3,b_5)$ with $c \in L_B$. Since $d+\mathrm{ct}(b_1)c=1/2$ there is no blocking sequence $B=(b,1)$ for $(b_3,b_5)$ with $c \in L_B$. Thus there is a blocking sequence $(b,b')$ for $(b_3,b_5)$ with $c \in L_B$, implying $c=1/\ell$ for some positive integer $\ell$. By Corollary \ref{B diag 2} we have $\ell \geq \mathrm{ct}(b_4)-\mathrm{ct}(b_5)+1$, implying that we are in case (d). 
 
Subcase (ii):  If there is a box $b >b_1$ with $\mathrm{ct}(b)=\mathrm{ct}(b_1)-1$ then
$$c > \frac{1}{\mathrm{ct}(b)-\mathrm{ct}(b_5)+1},$$ and hence a blocking sequence $B$ for $(b,b_5)$ must exist with $c \in L_B$. We cannot have $B=(b',1)$ since $c>0$ implies that only one equation of the form $d+\ell c=1/2$ can hold and we are assuming $d+\mathrm{ct}(b_1) c=1/2$ already. So $B=(b',b'')$ and $c=1/\ell$ for some positive integer $\ell$. By Corollaries \ref{B diag 1} and \ref{B diag 2} we have $\ell \geq \mathrm{ct}(b_2)-\mathrm{ct}(b_5)+1$, and we are in case (b).

Case 2: if $d+\mathrm{ct}(b_1) c > 1/2$ then there must be a blocking sequence $B$ for $(b_1,1)$ with $c \in L_B$. This blocking sequence must be of the form $(b,1)$ for some $b > b_1$. Thus an equation $d+\ell c=1/2$ holds with $\mathrm{ct}(b_2) \leq \ell=\mathrm{ct}(b) < \mathrm{ct}(b_1)$. Suppose first that $\ell > \mathrm{ct}(b_2)$. If $c \leq \frac{1}{\ell-\mathrm{ct}(b_5)}$ then we are in case (c) above. Thus we may assume
$$c > \frac{1}{\ell-\mathrm{ct}(b_5)}=\frac{1}{\mathrm{ct}(b')-\mathrm{ct}(b_5)+1}$$ where $b'$ is the box directly below $b$ (there is such a box since $\ell > \mathrm{ct}(b_2)$). Hence there must exist a blocking sequence $B'$ for $(b',b_5)$ with $c \in L_{B'}$. Since $c>0$ the only equation of the form $d+\ell' c=1/2$ that holds is $d+\ell c=1/2$, and hence this blocking sequence is necessarily of the form $B'=(b'',b''')$ for some $b'' \geq b'$ and $b''' \leq b_5$. Thus $c=1/m$ for some integer $m$ with
$$m=\mathrm{ct}(b'')-\mathrm{ct}(b''')+1 \leq \mathrm{ct}(b')-\mathrm{ct}(b_5)+1 \leq \ell-\mathrm{ct}(b_5).$$ Since $L_c(\lambda^\bullet)$ is diagonalizable, we have $m \geq \mathrm{ct}(b_2)-\mathrm{ct}(b_5)+1$ by Corollaries \ref{B diag 1} and \ref{B diag 2}. Thus we are in case (e).

Finally we suppose that $\ell=\mathrm{ct}(b_2)$, or in other words $b=b_2$. We now repeat the argument of Subcase (i) of Case 1 above to conclude that we are in case (d). This completes the proof of necessity of at least one of the conditions (a)-(e). 

For the converse, we first suppose $\lambda=(1^n)$. By Corollary \ref{B diag 1} $L_c(\lambda^\bullet)$ is diagonalizable for all $c \geq 0$. The condition (b) in Theorem \ref{unitary thm} always holds so we need only check condition (c) there to verify unitarity. If $d \leq 1/2$ then we have $d+\mathrm{ct}(b) c \leq d \leq 1/2$ for all boxes $b$ and hence (c) holds. On the other hand, if $d+\mathrm{ct}(b) c=1/2$ for some box $b \in \lambda$, then $B=(b,1)$ is a blocking sequence for all $(b',1)$ and $(b',2)$ with $b' \leq b$. Moreover for any $b'$ with $b' \nleq b$ we have
$$d+\mathrm{ct}(b') c<d+\mathrm{ct}(b) c=1/2,$$ so condition (c) holds.

Now we suppose $\lambda \neq (1^n)$ and treat cases (a)-(e) in the statement one at a time. First we verify diagonalizability in each case. In cases (a), (b), (c), and (e) we have 
$$c \leq 1/(\mathrm{ct}(b_2)-\mathrm{ct}(b_5)+1)$$ and hence $L_c(\lambda^\bullet)$ is diagonalizable by Corollaries \ref{B diag 1} and \ref{B diag 2}. In case (d), if $\lambda$ is a rectangle then Corollary \ref{B diag 1} shows that $L_c(\lambda^\bullet)$ is diagonalizable, and if $\lambda$ is not a rectangle then Corollary \ref{B diag 2} shows that $L_c(\lambda^\bullet)$ is diagonalizable. 

It remains to verify that conditions (b) and (c) from Theorem \ref{unitary thm} hold provided that one of (a)-(e) of the present corollary does. In case (a) of this corollary conditions (b) and (c) of Theorem \ref{unitary thm} automatically hold. In case (b) of this corollary the inequality $d+\mathrm{ct}(b_1) c \leq 1/2$ ensures that condition (c) of Theorem \ref{unitary thm} holds, and arguing as in the last part of the proof of Corollary \ref{type a corollary} shows that condition (b) of Theorem \ref{unitary thm} holds. 

Suppose now that we are in case (c) of this corollary, with $d+\ell c=1/2$ and 
$$c \leq \frac{1}{\ell-\mathrm{ct}(b_5)}$$ for some $\mathrm{ct}(b_2) < \ell \leq \mathrm{ct}(b_1)$. Let $b \in \lambda$ be the box with $\mathrm{ct}(b)=\ell$ and $b_1 \leq b \leq b_2$. Thus $B=(b,1)$ is a blocking sequence with $c \in L_B$ for all $(b',1)$ and all $(b',b'')$ with $b' \leq b$. If
$$d+\mathrm{ct}(b') c > 1/2$$ then we must have $\ell < \mathrm{ct}(b')$. We prove that $B=(b,1)$ is a blocking sequence for $(b',1)$. We may assume that there is no box to the right of $b'$, and hence $b' \leq b$. A similar argument shows that if $\mathrm{ct}(b') c >1$ then $b' \leq b$. Hence (c) of Theorem \ref{unitary thm} holds. If  $b',b''$ are boxes with
$$(\mathrm{ct}(b')-\mathrm{ct}(b'')+1) c > 1$$ then
$$1<(\mathrm{ct}(b')-\mathrm{ct}(b'')+1) c \leq \frac{\mathrm{ct}(b')-\mathrm{ct}(b'')+1}{\ell-\mathrm{ct}(b_5)} \leq  \frac{\mathrm{ct}(b')-\mathrm{ct}(b_5)+1}{\ell-\mathrm{ct}(b_5)}$$ and hence $\mathrm{ct}(b') \geq \ell=\mathrm{ct}(b)$. We now conclude as before that $(b,1)$ is a blocking sequence for $(b',b'')$, and hence (b) of Theorem \ref{unitary thm} holds.

We now assume we are in case (d) of this corollary. Since $d+\mathrm{ct}(b_2)  c=1/2$ then $B=(b_2,1)$ is a blocking sequence with $c \in L_B$ for all $(b',1)$, $(b',2)$, and $(b',b'')$ with $b' \leq b_2$. If $b' \nleq b_2$ then we have $\mathrm{ct}(b') \leq \mathrm{ct}(b_2)$ and hence 
$$d+\mathrm{ct}(b') c \leq 1/2.$$ If $\lambda$ is a rectangle there are no $b' \nleq b_2$ and if $b_4=b_5$ then any $b' \nleq b_2$ has $\mathrm{ct}(b') \leq 0$ so that $\mathrm{ct}(b') c \leq 0 \leq 1$. It follows that (c) of Theorem \ref{unitary thm} holds in these two cases. Otherwise  either $$c \leq \frac{1}{\mathrm{ct}(b_3)-\mathrm{ct}(b_5)+1} \quad \text{or} \quad c=\frac{1}{\ell-\mathrm{ct}(b_5)+1}$$ for an integer $\ell$ satisfying $\mathrm{ct}(b_4) \leq \ell < \mathrm{ct}(b_3)$. Note that for all $b' \nleq b_2$ we have $\mathrm{ct}(b') \leq \mathrm{ct}(b_4)-\mathrm{ct}(b_5)+1$ (c.f. the proof of Corollary \ref{type a corollary}), implying $\mathrm{ct}(b') c \leq 1$ for all $b' \nleq b_2$. This establishes (c) of Theorem \ref{unitary thm} in this case. To establish (b) we may assume $(\mathrm{ct}(b')-\mathrm{ct}(b'')+1) c > 1$ for some $b' \nleq b_2$ and $b'' \in \lambda$. If $$c \leq \frac{1}{\mathrm{ct}(b_3)-\mathrm{ct}(b_5)+1}$$ this is impossible, so we may assume 
$$c=\frac{1}{\ell-\mathrm{ct}(b_5)+1}$$ for an integer $\ell$ satisfying $\mathrm{ct}(b_4) \leq \ell < \mathrm{ct}(b_3)$. But now the same argument as in the proof of Corollary \ref{type a corollary} establishes (b) of Theorem \ref{unitary thm}.

Finally we assume we are in case (e) of the present corollary. Let $b$ be the box of the right rim of $\lambda$ with $\ell=\mathrm{ct}(b)$. The equality $d+\ell c=1/2$ ensures that $B=(b,1)$ is a blocking sequence with $c \in L_B$ for all $(b',1), (b',2)$, and $(b',b'')$ with $b' \leq b$. Moreover if $b' \nleq b$ then we have $\mathrm{ct}(b') < \mathrm{ct}(b)$ and hence
$$d+\mathrm{ct}(b') c < d+\mathrm{ct}(b)c=1/2.$$ 
Now the equality $c=1/m$ for some integer $m$ with $\mathrm{ct}(b_2)-\mathrm{ct}(b_5)+1 \leq m \leq \ell-\mathrm{ct}(b_5)$ implies as in the proof of Corollary \ref{type a corollary} that $\mathrm{ct}(b') c \leq 1$ for all $b' \in \lambda$, and that if $(\mathrm{ct}(b')-\mathrm{ct}(b'')+1)c > 1$ then there is a blocking sequence $B$ for $(b',b'')$ with $c \in L_B$. We have completed the proof.

\end{proof}

The unitary spectrum for $\lambda^\bullet=((6,6,6,4,4,4),\emptyset)$ is drawn next. Again, the shaded area (which should be understood to extend infinitely downwards) is the region in which the standard module itself is unitary.
\begin{center}
\begin{tikzpicture}[scale=14]
\tikzstyle{axes}=[]
\tikzstyle{wall}=[thick]
\tikzstyle{relevant wall}=[very thick]
\tikzstyle{dot}=[fill]
\begin{scope}[style=axes]
\draw[->] (-3/8,0) -- (3/8,0) node[right] {$c$} coordinate(c1 axis);
\draw[->] (0,-1/3) -- (0,9/16) node[above] {$d$} coordinate(c2 axis);
\end{scope}

\begin{scope}[very thick,blue,auto=left]
\draw (0, 1/2) -- (1/10,0);
\draw (0, 1/2) -- (-1/10,0);
\draw (0,1/2) -- (1/9,1/18);
\draw (0,1/2) -- (1/6,0);
\draw (0,1/2) -- (-1/9,1/18);
\draw (0,1/2) -- (-1/8,1/8);
\draw (0,1/2) -- (-1/4,0);
\draw[fill] (1/5,-1/10) circle (.1pt);
\draw[fill] (1/4,-1/4) circle (.1pt);
\draw[fill] (-1/3,-1/6) circle (.1pt);
\draw[fill] (-1/8,0) circle (.1pt);
\draw[->] (1/11,1/22 ) -- (1/11,-1/3);
\draw[->] (-1/11,1/22 ) -- (-1/11,-1/3);
\draw[->] (-1/10,0) -- (-1/10,-1/3);
\draw[->] (-1/9,-1/18 ) -- (-1/9,-1/3);
\draw[->] (-1/8,-1/8 ) -- (-1/8,-1/3);
\draw[->] (1/10,0) -- (1/10,-1/3);
\draw[->] (1/9,-1/18) -- (1/9,-1/3);
\fill[blue, nearly transparent] (0, 1/2) -- (1/11,1/22) -- (1/11, -1/3) -- (-1/11,-1/3) -- (-1/11,1/22) -- cycle;
\end{scope}

\begin{scope}[black] 
\draw (1/5,-1/10) node[right]{$\left(\frac{1}{5},\frac{-1}{10}\right)$};
\draw (1/4,-1/4) node[right]{$\left(\frac{1}{4},\frac{-1}{4}\right)$};
\draw (-1/3,-1/6) node[right]{$\left(\frac{-1}{3},\frac{-1}{6}\right)$};
\draw (-1/8,0) node[left]{ $\left(\frac{-1}{8},0\right)$};
\end{scope}

\end{tikzpicture}
\end{center}

We next treat the case $\lambda^\bullet=(\lambda^0, \lambda^1)$. We mark certain boxes as follows:
$$\lambda^0=\begin{ytableau}
 \ & &  b_1\\
 & &   \\
 & & b_2  \\
 &  \\
 b_4 &  b_3 \\
\end{ytableau} \quad
\lambda^1=\begin{ytableau}
 \ & & & &  b_1' \\
 & & & &  \\
 & & & &  \\
 & & & &  b_2' \\
 & &  \\
 b_4' & &  b_3'
\end{ytableau}$$ Here $b_1$ is the box of $\lambda^0$ of largest content, $b_2$ is the removable box of largest content, $b_3$ is the removable box of smallest content, and $b_4$ is the box of smallest content. We similarly define the boxes $b_i'$ of $\lambda^1$. We do not state the classification of $c$ so that $L_c(\lambda^\bullet)$ is diagonalizable here; it is quite complicated. As it turns out, the classification of $c$ for which $L_c(\lambda^\bullet)$ is unitary does does not require it and is actually simpler to state. 

\begin{corollary} \label{B corollary 2}
With notation as above the unitary spectrum $U(\lambda^\bullet)$ is the union of the following sets:
\begin{enumerate}
\item[(a)] The parallelogram 
$$d+(\mathrm{ct}(b_1)-\mathrm{ct}(b_4')+1) c \leq \frac{1}{2} \quad d+(\mathrm{ct}(b_4)-\mathrm{ct}(b_1')-1) c \leq \frac{1}{2}$$
$$-d+(\mathrm{ct}(b_1')-\mathrm{ct}(b_4)+1) c \leq \frac{1}{2} \quad -d+(\mathrm{ct}(b_4')-\mathrm{ct}(b_1)-1) c \leq \frac{1}{2}$$
\item[(b)] For each integer $l$ with $\mathrm{ct}(b_2)-\mathrm{ct}(b_4')+1 \leq l \leq \mathrm{ct}(b_1)-\mathrm{ct}(b_4')+1$, the line segment $S_l$ consisting of points $(c,d)$ with
$$d+lc=\frac{1}{2}, \quad c \geq 0, \quad \text{and} \quad -d+(\mathrm{ct}(b_1')-\mathrm{ct}(b_4)+1)c \leq \frac{1}{2},$$
\item[(c)] For each integer $l$ with $\mathrm{ct}(b_2')-\mathrm{ct}(b_4)+1 \leq l \leq \mathrm{ct}(b_1')-\mathrm{ct}(b_4)+1$ a line segment $S_l'$ consisting of the points $(c,d)$ with
$$-d+lc=\frac{1}{2}, \quad c \geq 0, \quad \text{and} \quad d+(\mathrm{ct}(b_1)-\mathrm{ct}(b_4')+1) c \leq \frac{1}{2},$$
\item[(d)] For each integer $l$ with $\mathrm{ct}(b_4)-\mathrm{ct}(b_1')-1 \leq l \leq \mathrm{ct}(b_3)-\mathrm{ct}(b_1')-1$, a line segment $T_l$ consisting of points $(c,d)$ with
$$d+lc=\frac{1}{2}, \quad c \leq 0, \quad \text{and} \quad -d+(\mathrm{ct}(b_4')-\mathrm{ct}(b_1)-1)c \leq \frac{1}{2},$$
\item[(e)] For each integer $l$ with $\mathrm{ct}(b_4')-\mathrm{ct}(b_1)-1 \leq l \leq \mathrm{ct}(b_3')-\mathrm{ct}(b_1)-1$, a line segment $T_l'$ consisting of points $(c,d)$ with
$$-d+lc=\frac{1}{2}, \quad c \leq 0, \quad \text{and} \quad d+(\mathrm{ct}(b_4)-\mathrm{ct}(b_1')-1)c \leq \frac{1}{2},$$
\item[(f)] For each pair $(l,m)$ of integers with $\mathrm{ct}(b_2)-\mathrm{ct}(b_4')+1 \leq l \leq \mathrm{ct}(b_1)-\mathrm{ct}(b_4')+1$ and $\mathrm{ct}(b_2')-\mathrm{ct}(b_4)+1 \leq m \leq \mathrm{ct}(b_1')-\mathrm{ct}(b_4)+1$, the point $P_{l,m}$ solving the equations
$$d+lc=\frac{1}{2} \quad \text{and} \quad -d+mc=\frac{1}{2},$$
\item[(g)] For each pair $(l,m)$ of integers with $\mathrm{ct}(b_4)-\mathrm{ct}(b_1')-1 \leq l \leq \mathrm{ct}(b_3)-\mathrm{ct}(b_1')-1$ and $\mathrm{ct}(b_4')-\mathrm{ct}(b_1)-1 \leq m \leq \mathrm{ct}(b_3')-\mathrm{ct}(b_1)-1$, the point $Q_{l,m}$ solving the equations
$$d+lc=\frac{1}{2} \quad \text{and} \quad -d+mc=\frac{1}{2}.$$
\end{enumerate}
\end{corollary}

\begin{proof}
If $c=0$ then by Theorem \ref{unitary thm} $L_c(\lambda^\bullet)$ is unitary if and only if $-1/2 \leq d \leq 1/2$. So we may assume $c > 0$. Assume first that if $L_c(\lambda^\bullet)$ is unitary. If $$d+(\mathrm{ct}(b_1)-\mathrm{ct}(b_4')+1)c > 1/2$$ then there must exist a blocking sequence $B$ for $(b_1,b_4)$ with $c \in L_B$. This $B$ must be of the form $(b,b')$ with $b \geq b_1$ and $b' \leq b_4'$, and hence
$$d+\ell c=1/2, \quad \text{where} \quad \ell=\mathrm{ct}(b)-\mathrm{ct}(b')+1.$$ We observe that
$$\ell \leq \mathrm{ct}(b_1)-\mathrm{ct}(b_4')+1$$ since $\mathrm{ct}(b_1) \geq \mathrm{ct}(b)$ and $\mathrm{ct}(b_4') \leq \mathrm{ct}(b')$. Moreover we must have $\mathrm{ct}(b_2)-\mathrm{ct}(b_4')+1 \leq \ell$ since otherwise $$\mathrm{ct}(b)-\mathrm{ct}(b') < \mathrm{ct}(b_2)- \mathrm{ct}(b_4') \quad \implies \quad \mathrm{ct}(b)-\mathrm{ct}(b_2) < \mathrm{ct}(b') -\mathrm{ct}(b_4'),$$ so there is a box $b'' \in \lambda^1$ with $\ell=\mathrm{ct}(b_2)-\mathrm{ct}(b'')$.  But this implies $k_c(b_2)=1$ and hence $L_c(\lambda^\bullet)$ is not diagonalizable, contradiction. 

We have proved that if $$d+(\mathrm{ct}(b_1)-\mathrm{ct}(b_4')+1)c > 1/2$$ then there is $\ell$ with
$$\mathrm{ct}(b_2)-\mathrm{ct}(b_4')+1 \leq \ell \leq \mathrm{ct}(b_1)-\mathrm{ct}(b_4')+1$$ and
$$d+\ell c=1/2.$$ By symmetry, if $-d+(\mathrm{ct}(b_1')-\mathrm{ct}(b_4)+1) c > 1/2$ then there is some $\ell$ with 
$$\mathrm{ct}(b_2')-\mathrm{ct}(b_4)+1 \leq \ell \leq \mathrm{ct}(b_1')-\mathrm{ct}(b_4)+1$$ and
$$-d+\ell c=1/2.$$ This shows that, for $c>0$, if we are not in case (a) of the present corollary then we are in one of cases (b), (c), or (f) (replacing $c$ by $-c$ and transposing $\lambda^\bullet$ gives the others cases).

Conversely, assume we are in one of cases (a), (b), (c), or (f) (the others may be treated by replacing $c$ by $-c$ as above). We will show first that $L_c(\lambda^\bullet)$ is diagonalizable. In each case we have
$$d+(\mathrm{ct}(b_2)-\mathrm{ct}(b_4')+1)c \leq 1/2 \quad \text{and} \quad -d+(\mathrm{ct}(b_2')-\mathrm{ct}(b_4)+1)c \leq 1/2$$ which implies
$$(\mathrm{ct}(b_2)-\mathrm{ct}(b_4')+1+\mathrm{ct}(b_2')-\mathrm{ct}(b_4)+1)c \leq 1.$$ Thus
$$c \leq \frac{1}{\mathrm{ct}(b_2)-\mathrm{ct}(b_4)+1} \quad \text{and} \quad c \leq \frac{1}{\mathrm{ct}(b_2')-\mathrm{ct}(b_4')+1}.$$ Together these inequalities imply $k_c(b)=\infty$ for all removable boxes $b$ of $\lambda^\bullet$, and hence by Theorem \ref{diag thm} $L_c(\lambda^\bullet)$ is diagonalizable. 

In case (a) of the present corollary, the conditions (b) and (c) of Theorem \ref{diag thm} automatically hold. Suppose we are in case (b). We have $d+\ell c=1/2$ , for some $\mathrm{ct}(b_2)-\mathrm{ct}(b_4')+1 \leq \ell \leq \mathrm{ct}(b_1)-\mathrm{ct}(b_4')+1$, $c > 0$, and $-d+(\mathrm{ct}(b_1')-\mathrm{ct}(b_4)+1)c \leq 1/2$. Let $b$ be the box with $b_1 \leq b \leq b_2$ and $\mathrm{ct}(b)-\mathrm{ct}(b_4')+1=\ell$. Thus $B=(b,1)$ is a blocking sequence with $c \in L_B$ for all $(b',j)$ and all $(b',b'')$ with $b' \leq b$. Suppose now that $b',b'' \in \lambda^0$ with $$(\mathrm{ct}(b')-\mathrm{ct}(b'')+1)c >1.$$ Since
$$(\mathrm{ct}(b)-\mathrm{ct}(b_4')+1+\mathrm{ct}(b_1')-\mathrm{ct}(b_4)+1)c \leq 1,$$ if $\mathrm{ct}(b') \leq \mathrm{ct}(b)$ we obtain
$$1<(\mathrm{ct}(b')-\mathrm{ct}(b'')+1)c \leq (\mathrm{ct}(b)-\mathrm{ct}(b_4')+1+\mathrm{ct}(b_1')-\mathrm{ct}(b_4)+1)c \leq 1.$$ Thus $\mathrm{ct}(b') > \mathrm{ct}(b)$ and hence $b' \leq b$, so $B=(b,1)$ is a blocking sequence for $(b',b'')$ with $c \in L_B$. Likewise if $b',b'' \in \lambda^1$ then since $\mathrm{ct}(b') \leq \mathrm{ct}(b_1')$
$$(\mathrm{ct}(b')-\mathrm{ct}(b'')+1)c \leq (\mathrm{ct}(b)-\mathrm{ct}(b_4')+1+\mathrm{ct}(b_1')-\mathrm{ct}(b_4)+1)c \leq 1.$$ Suppose now that $b' \in \lambda^1$ and $b'' \in \lambda^0$. Then
$$-d+(\mathrm{ct}(b')-\mathrm{ct}(b'')+1)c \leq -d+(\mathrm{ct}(b_1')-\mathrm{ct}(b_4)+1)c \leq 1/2$$ and
$$-d+\mathrm{ct}(b') c \leq -d+(\mathrm{ct}(b_1')-\mathrm{ct}(b_4)+1)c \leq 1/2.$$ Finally, for $b' \in \lambda^0$ and $b'' \in \lambda^1$ with $\mathrm{ct}(b') > \mathrm{ct}(b)$ we have $b' \leq b$ and hence $B=(b,1)$ is a blocking sequence for $(b',1)$ and $(b',b'')$ with $c \in L_B$, while if $\mathrm{ct}(b') \leq \mathrm{ct}(b)$ then
$$d+(\mathrm{ct}(b')-\mathrm{ct}(b'')+1)c \leq d+(\mathrm{ct}(b)-\mathrm{ct}(b_4')+1)c =1/2$$ and
$$d+\mathrm{ct}(b') c \leq d+(\mathrm{ct}(b)-\mathrm{ct}(b_4')+1)c =1/2.$$ It follows that conditions (b) and (c) from Theorem \ref{unitary thm} hold.

Case (c) of the present corollary implies unitarity by symmetry (interchanging $d$ and $-d$, and $\lambda^0$ and $\lambda^1$), and case (f) is completed by analogous arguments.
\end{proof}

A picture of the unitary spectrum for $\lambda^\bullet=((3,3,3,2),(5,5,5,5,3,3))$ follows. The area shaded light blue is the region in which the standard module itself is unitary.

\newpage

\begin{center}
\begin{tikzpicture}[scale=17]
\tikzstyle{axes}=[]
\tikzstyle{wall}=[thick]
\tikzstyle{relevant wall}=[very thick]
\tikzstyle{dot}=[fill]
\begin{scope}[style=axes]
\draw[->] (-1/3,0) -- (1/3,0) node[right] {$c$} coordinate(c1 axis);
\draw[->] (0,-3/5) -- (0,3/5) node[above] {$d$} coordinate(c2 axis);
\end{scope}

\begin{scope}[very thick,blue,auto=left]
\fill[blue, nearly transparent] (0, 1/2) -- (1/17,1/34) -- (0,-1/2) -- (-1/17,-1/34) -- cycle;
\draw (0, 1/2) -- (1/17,1/34);
\draw (0, -1/2) -- (1/17,1/34);
\draw (0, 1/2) -- (-1/17,-1/34);
\draw (0, -1/2) -- (-1/17,-1/34);
\draw (0,1/2) -- (1/16,1/16);
\draw (0,1/2) -- (1/15,1/10);
\draw (0,-1/2) -- (1/16,0);
\draw (0,-1/2) -- (1/15,-1/30);
\draw (0,-1/2) -- (1/14,-1/14);
\draw (0,1/2) -- (-1/16,0);
\draw (0,-1/2) -- (-1/16,-1/16);
\draw (0,-1/2) -- (-1/15,-1/10);
\draw[fill] (1/15,1/30) circle (.05 pt);
\draw[fill] (1/14,0) circle (.05 pt);
\draw[fill] (1/14,1/14) circle (.05 pt);
\draw[fill] (1/13,1/26) circle (.05 pt);
\draw[fill] (1/13,-1/26) circle (.05 pt);
\draw[fill] (1/12,0) circle (.05 pt);
\draw[fill] (-1/15,-1/30) circle (.05 pt);
\draw[fill] (-1/14,-1/14) circle (.05 pt);
\end{scope}
\end{tikzpicture}
\end{center}

\def\cprime{$'$} \def\cprime{$'$}


\begin{thebibliography}{GGOR}

\bibitem[BGS]{BGS} C. Berkesch-Zamaere, S. Griffeth, and S. Sam, \emph{Jack polynomials as fractional quantum Hall states and the Betti numbers of the $(k+1)$-equals ideal}, Comm. Math. Phys. vol. 330 (2014), issue 1, pp 415--434

\bibitem[BEF]{BEF} A. Braverman, P. Etingof, and M. Finkelberg, \emph{Cyclotomic double affine Hecke algebras}, arxiv:1611.10216

\bibitem[Che1]{Che1} I. Cherednik, \emph{Double affine Hecke algebras}, London Mathematical Society Lecture Note Series, 319. Cambridge University Press, Cambridge, 2005.

\bibitem[Che2]{Che2} I. Cherednik, \emph{Calculation of the monodromy of some W -invariant local systems of type B, C and D}, (Russian) Funktsional. Anal. i Prilozhen. 24 (1990), no. 1, 88Ð89; translation in Funct. Anal. Appl. 24 (1990), no. 1, 78Ð79 

\bibitem[Ciu]{Ciu} D. Ciubotaru, \emph{Dirac cohomology for symplectic reflection algebras}, Seletca Mathematica (NS), Vol. 22,  (2016) Issue 1, pp 111--144

\bibitem[Dez1]{Dez1} C. Dez\' el\' ee,  \emph{Generalized graded Hecke algebras of types B and D}, Comm. Algebra 34 (2006), no. 6, 2105Ð2128

\bibitem[Dez2]{Dez2} C. Dez\' el\' ee, \emph{Generalized graded Hecke algebra for complex reflection group of type G(r,1,n)}, arXiv:math/0605410

\bibitem[DuGr]{DuGr} C. Dunkl and S. Griffeth, \emph{Generalized Jack polynomials and the representation theory of rational Cherednik algebras}, Selecta Math. (N.S.) 16 (2010), no. 4, 791Ð818, arXiv:1002.4607

\bibitem[DuOp]{DuOp} C. Dunkl and E. Opdam, \emph{Dunkl operators for complex reflection groups}, Proc. London Math. Soc. (3) 86 (2003), no. 1, 70Ð108, arXiv:math/0108185

\bibitem[EtSt]{EtSt}
P. Etingof and E. Stoica (with an appendix by S. Griffeth), \emph{Unitary representations of rational Cherednik algebras}, Represent. Theory 13 (2009), 349Ð370, arXiv:0901.4595

\bibitem[Gri1]{Gri1} S. Griffeth, \emph{Towards a combinatorial representation theory for the rational Cherednik algebra of type $G(r,p,n)$}, Proc. Edinb. Math. Soc. (2) 53 (2010), no. 2, 419Ð445, arXiv:math/0612733

\bibitem[Gri2]{Gri2} S. Griffeth, \emph{Orthogonal functions generalizing Jack polynomials}, Trans. Amer. Math. Soc. 362 (2010), no. 11, 6131Ð6157, arXiv:0707.0251

\bibitem[HuWo]{HuWo} J.-S. Huang and K. Wong, \emph{A Casselman-Osborne theorem for rational Cherednik algebras}, Transformation Groups, online first (2017)

\bibitem[Ram]{Ram} A. Ram, \emph{Affine Hecke algebras and generalized standard Young tableaux}, Special issue celebrating the 80th birthday of Robert Steinberg, J. Algebra, 230 (2003), 367--415, arXiv:math/0401329

\bibitem[RaSh]{RaSh} A. Ram and A. Shepler, \emph{Classification of graded Hecke algebras for complex reflection groups}, Comment. Math. Helv. 78 (2003), no. 2, 308Ð334, arXiv:math/0209135

\bibitem[Sto]{Sto} E. Stoica, \emph{Unitary representations of Hecke algebras of complex reflection groups}, arXiv:0910.0680

\bibitem[Suz]{Suz} T. Suzuki, \emph{Cylindrical combinatorics and representations of Cherednik algebras of type A}, arXiv:math/0610029

\bibitem[SuVa]{SuVa} T. Suzuki and M. Vazirani, \emph{Tableaux on periodic skew diagrams and irreducible representations of the double affine Hecke algebra of type A}, IMRN No. 27, 2005

\bibitem[VaVa]{VaVa} M. Varagnolo and E. Vasserot, \emph{Cyclotomic double affine Hecke algebras and affine parabolic category O}, Adv. Math. 225 (2010), no. 3, 1523--1588, arXiv:0810.5000

\bibitem[Web]{Web} B. Webster, \emph{Representation theory of the cyclotomic Cherednik algebra via the Dunkl-Opdam subalgebra}, arxiv:1609.05494

\end{thebibliography}
\end{document}